\newcommand{\subjclass}[2][2010]{%
  \let\@oldtitle\@title%
  \gdef\@title{\@oldtitle\footnotetext{#1 \emph{Mathematics subject classification.} #2}}%
}
\newcommand{\keywords}[1]{%
  \let\@@oldtitle\@title%
  \gdef\@title{\@@oldtitle\footnotetext{\emph{Key words and phrases.} #1.}}%
}
\pgfplotsset{compat=newest}
\def\var{{\rm Var}}
\def\supp{{\rm Supp}}
\def\XXint#1#2#3{{\setbox0=\hbox{$#1{#2#3}{\int}$ }
\vcenter{\hbox{$#2#3$ }}\kern-.6\wd0}}
\newcommand\error{\varepsilon}
\newcommand{\dd}{\mathrm{d}}
\newcommand{\by}[2]{\stackrel{#1}{#2}}
\newcommand{\B}{\mathbb{B}}
\newcommand{\R} {\mathbb{R}}
\newcommand{\Q} {\mathbb{Q}}
\newcommand{\Z} {\mathbb{Z}}
\newcommand{\nn}{\nonumber}
\newcommand{\N} {\mathbb{N}}
\newcommand{\dist} {\textnormal{dist}}
\newcommand{\tx}{\mathscr{H}}
\newcommand{\evp}[1]{\bar\omega^{#1}}
\newcommand{\vd}[1]{\partial'_{#1}}  
\newcommand{\rn}[1]{\mathrm{#1}}  
\newcommand{\tfa}{\zeta} 
\newcommand{\tfar}{\bar\tfa}
\newcommand{\tfb}{\xi}
\newcommand{\tfbr}{\bar\tfb}
\newcommand{\tfe}{\eta}
\newcommand{\tfer}{\bar\eta}
\DeclarePairedDelimiter{\abs}{\lvert}{\rvert}
\DeclarePairedDelimiter{\norm}{\lVert}{\rVert}
\providecommand{\Abs}[1]{\Bigr\lvert#1\Bigl\rvert}
\providecommand{\mb}[1]{\mathbb{#1}}
\providecommand{\ms}[1]{\mathscr{#1}}
\providecommand{\mf}[1]{\mathfrak{#1}}
\newcommand{\tr}{{\rm tr}}
\newcommand{\diag}{{\rm diag}}
\newtheorem{thmx}{Theorem}
\newtheorem{propx}[thmx]{Proposition}
\newtheorem{theorem}{Theorem}
\newtheorem{lemma}[theorem]{Lemma}
\newtheorem{corollary}[theorem]{Corollary}
\theoremstyle{definition}  
\newtheorem{definition}[theorem]{Definition}
\newtheorem{remark}[theorem]{Remark}
\begin{document}

\title{Stochastic integrability of heat-kernel bounds for random walks in a balanced random environment}

\author[1]{Xiaoqin Guo \thanks{The work of XG is supported by Simons Foundation through Collaboration Grant for Mathematicians \#852943.}}
\author[2]{Hung V. Tran  \thanks{HT is supported in part by NSF CAREER grant DMS-1843320 and a Vilas Faculty Early-Career Investigator Award.}}

\affil[1]
{
Department of Mathematical Sciences, 
University of Cincinnati , 2815 Commons Way, Cincinnati, OH 45221, USA}
\affil[2]
{
Department of Mathematics, 
University of Wisconsin Madison, 480 Lincoln  Drive, Madison, WI 53706, USA}

\subjclass{
35J15 
35J25 
35K10 
35K20 
60G50 
60J65 
60K37 
74Q20 
76M50. 
}

\keywords{
random walk in a balanced random environment; 
non-divergence form difference operators;
stochastic integrability of heat-kernel bounds;
optimal diffusive decay;
functional central limit theorem;
quantitative stochastic homogenization
}

\maketitle

\begin{abstract}
We consider random walks in a balanced i.i.d.  random environment in $\mathbb Z^d$ for $d\geq 2$ and the corresponding discrete non-divergence form difference operators. 
We first obtain an exponential integrability of the heat kernel bounds.
We then prove the optimal diffusive decay of the semigroup generated by the heat kernel for $d \geq 3$.
As a consequence, we deduce a functional central limit theorem for the environment viewed from the particle.
\end{abstract}



\section{Introduction}\label{sec:intro}
In this article we consider random walks in a balanced i.i.d.   random environment in $\Z^d$ for $d\geq 2$. 
\subsection{Settings}
Let $\mb S_{d\times d}$ be the set of $d\times d$ positive-definite diagonal matrices. A map 
\[
\omega:\Z^d\to\mb S_{d\times d}
\] is called an {\it environment}. Denote the set of all environments by $\Omega$ and let $\mb P$ be a probability measure on $\Omega$ so that 
\[
\left\{\omega(x)=\mathrm{diag}[\omega_1(x),\ldots, \omega_d(x)], x\in\Z^d\right\}
\] are i.i.d.\,under $\mb P$. Expectation with respect to $\mb P$ is denoted by $\mb E$ or $E_{\mb P}$. 

Let $\{e_1,\ldots,e_d\}$ be the  canonical basis for $\R^d$.  
For a given function $u:\Z^d\to\R$ and $\omega\in\Omega$, define the non-divergence form difference operator
\begin{equation}\label{eq:def-nabla}
\tr(\omega(x)\nabla^2 u)=\sum_{i=1}^d\omega_i(x)[u(x+e_i)+u(x-e_i)-2u(x)],
\end{equation}
where $\nabla^2=\diag[\nabla^2_1,\ldots, \nabla_d^2]$, and 
$\nabla_i^2 u(x)=u(x+e_i)+u(x-e_i)-2u(x)$. 

For $r>0$, $y\in\R^d$ we let
\[
\B_r(y)=\left\{x\in\R^d: |x-y|<r\right\}, \quad
B_r(y)=\B_r(y)\cap\Z^d
\]
denote the continuous and discrete balls with center $y$ and radius $r$, respectively.
When $y=0$, we also write
$\B_r=\B_r(0)$ and $B_r=B_r(0)$. 
 For any $B\subset\Z^d$, its {\it discrete boundary} is the set
\[
\partial B:=\left\{z\in\Z^d\setminus B: \dist(z,x)=1 \text{ for some }x\in B\right\}.
\]
Let $\bar B=B\cup\partial B$. Note that with abuse of notation, whenever confusion does not occur,  we also use $\partial A$ and $\bar A$ to denote the usual continuous boundary and closure of $A\subset\R^d$, respectively.

 For $x\in\Z^d$, a {\it spatial shift} $\theta_x:\Omega\to\Omega$ is defined by 
 \[
 (\theta_x\omega)(\cdot)=\omega(x+\cdot).
 \]
In a random environment $\omega\in\Omega$, we consider the discrete elliptic Dirichlet problem
\begin{equation}\label{eq:elliptic-dirich}
\left\{
\begin{array}{lr}
\tfrac 12\tr(\omega\nabla^2u(x))=\frac{1}{R^2}f\left(\tfrac{x}{R}\right)\psi(\theta_x\omega) & x\in B_R,\\[5 pt]
u(x)=g\left(\tfrac{x}{|x|}\right) & x\in \partial B_R,
\end{array}
\right.
\end{equation}
where $f\in\R^{\B_1}, g\in\R^{\partial\B_1}$ are functions with nice enough regularity properties and $\psi\in\R^\Omega$ is bounded and satisfies suitable measurability conditions. Stochastic homogenization studies  (for $\mb P$-almost all $\omega$)  the convergence of $u$ to the solution $\bar u$ of a deterministic {\it effective equation}
\begin{equation}\label{eq:effective-ellip}
\left\{
\begin{array}{lr}
\tfrac 12\tr(\bar a D^2\bar{u})
=f\bar\psi &\text{ in }\B_1,\\ 
\bar u=g &\text{ on }\partial \B_1,
\end{array}
\right.
\end{equation}
as $R\to\infty$. Here $D^2 \bar{u}$ denotes the Hessian matrix of $\bar{u}$ and $\bar a=\bar a(\mb P)\in\mb S_{d\times d}$ and $\bar\psi=\bar\psi(\mb P,\psi)\in\R$ are {\it deterministic} and do not depend on the realization of the random environment (see the statement of Proposition \ref{thm:quant-homo} for formulas for $\bar{a}$ and $\bar{\psi}$).

The difference equation \eqref{eq:elliptic-dirich} is used to describe random walks in a random environment (RWRE) in $\Z^d$. To be specific, 
we set
\begin{equation}\label{def:omegaxei}
\omega(x,x\pm e_i):=\frac{\omega_i(x)}{2\tr\omega(x)} \quad  \text{ for } i=1,\ldots d,
\end{equation}
 and $\omega(x,y)=0$ if $|x-y|\neq 1$. Namely, we normalize $\omega$ to get a transition probability.  We remark that the configuration of $\{\omega(x,y):x,y\in\Z^d\}$ is also called a {\it balanced environment} in the literature \cite{L-82,GZ-12,BD-14}.
\begin{definition}\label{def:rwre-discrete}
For a fixed $\omega\in\Omega$, the random walk  $(X_n)_{n\ge 0}$ in the environment $\omega$ is a Markov chain  in $\Z^d$ with transition probability $P_\omega^x$ specified by
\begin{equation}\label{eq:def-RW}
P_\omega^x\left(X_{n+1}=z|X_n=y\right)=\omega(y,z).
\end{equation}
 \end{definition} 
The expectation with respect to $P_\omega^x$ is written as $E_\omega^x$. When the starting point of the random walk is $0$, we sometimes omit the superscript and simply write $P_\omega^0, E_\omega^0$ as $P_\omega$ and $E_\omega$, respectively.  Notice that for random walks $(X_n)$ in an environment $\omega$, 
\begin{equation}\label{def:omegabar}
\bar\omega^i=\theta_{X_i}\omega\in\Omega, \quad i\ge 0,
\end{equation}
is also a Markov chain, called the {\it environment viewed from the particle} process. With abuse of notation, we enlarge our probability space so that $P_\omega$ still denotes the joint law of the random walks and $(\bar\omega^i)_{i\ge 0}$. 

We also consider the {\it continuous-time} RWRE $(Y_t)$ on $\Z^d$.
\begin{definition}\label{def:rwre-continuous}
Let $(Y_t)_{t\ge 0}$ be the Markov process on $\Z^d$ with generator \begin{equation}\label{eq:def-of-L}
L_\omega u(x)=\sum_y\omega(x,y)[u(y)-u(x)]=\frac{1}{2\tr\omega(x)}\tr(\omega(x)\nabla^2 u).
\end{equation}
\end{definition}
With abuse of notation, we also let $P_\omega^x$ denote the quenched law of $(Y_t)$. If there is no ambiguity from the context, we also write, for $x,y\in\Z^d$, $n\in\Z, t\in\R$, the transition kernels of the discrete and continuous time walks as
\[
p_n^\omega(x,y)=P_\omega^x(X_n=y), \quad\text{ and }\quad
p_t^\omega(x,y)=P_\omega^x(Y_t=y),
\]
respectively. 

Both the discrete- and continuous-time RWRE share the same trajectory, and their behavior are very much the same.  
The solutions to the Dirichlet problem can be characterized using the discrete-time RWRE, whereas for the transition kernels it is easier to manipulate the continuous-time case where the derivatives have less cumbersome notation compared to theirs discrete counterparts.  Hence we will use both $(X_n)$ and $(Y_t)$ in our paper for convenience.

\subsection{Main assumptions}
We assume the following points throughout the paper.
\begin{enumerate}
\item[(A1)] $\left\{\omega(x), x\in \Z^d \right\}$ are i.i.d.\,under the probability measure $\mb P$.
\item[(A2)] $\frac{\omega}{\tr\omega}\ge 2\kappa{\rm I}$ for $\mb P$-almost every $\omega$ and some constant 
$\kappa\in(0,\tfrac{1}{2d}]$.
\item[(A3)] $\psi$ is a measurable function of the environment with the property that $\{\psi(\theta_x\omega):x\in\Z^d\}$ are i.i.d. under $\mb P$.
\end{enumerate} 
In this paper, we use $c, C$ to denote positive constants which may change from line to line but that only depend on the dimension $d$ and the ellipticity constant $\kappa$ unless otherwise stated. We write $A
\lesssim B$ if $A\le CB$, and $A\asymp B$ if both $A\lesssim B$ and $A\gtrsim B$ hold.

\subsection{Earlier results in the literature}

The following quenched central limit theorem (QCLT) was proved by Lawler \cite{L-82}, which is a discrete version of Papanicolaou,  Varadhan \cite{PV-82}.
\begin{thmx}\label{thm:QCLT}
Assume {\rm(A2)} and that law $\mb P$ of the environment is ergodic under spatial shifts $\{\theta_x:x\in\Z^d\}$. Then 
\begin{enumerate}[(i)]
\item There exists a probability measure $\mb Q\approx\mb P$ such that $(\evp{i})_{i\ge 0}$ is an ergodic (with respect to time shifts) sequence under law $\mb Q\times P_\omega$.
\item For $\mb P$-almost every $\omega$, the rescaled path $X_{n^2t}/n$ converges weakly (under law $P_\omega$) to a Brownian motion with covariance matrix $\bar a=E_\Q[\omega/\tr\omega]>0$. 
\end{enumerate}
\end{thmx}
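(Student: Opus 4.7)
The balanced condition $\omega(y, y+e_i) = \omega(y, y-e_i)$ means that under $P_\omega^x$ the coordinate process $X_n - x$ is a martingale with bounded increments, and its predictable quadratic variation is $\langle X\rangle_n = \sum_{i=0}^{n-1} \Phi(\evp{i})$ where $\Phi(\omega) := \omega(0)/\tr\omega(0) \in \mb S_{d\times d}$. Therefore, once we have an invariant and ergodic measure $\mb Q$ for the environment-viewed-from-the-particle chain $(\evp{i})$, the functional martingale central limit theorem (Helland/Brown) will deliver part (ii), with $\bar a = E_\Q[\Phi]$ being exactly the $\mb Q$-expectation of the one-step covariance. The whole problem thus reduces to the construction and analysis of $\mb Q$ in part (i).

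For part (i) I would follow the classical Kozlov--Papanicolaou--Varadhan strategy in the form adapted by Lawler to the balanced, non-reversible, discrete setting. Fix $N \in \N$ and let $\omega^{N}$ be the $\Z^{d}$-periodic extension of $\omega|_{[-N,N]^d}$. The random walk in $\omega^{N}$ induces an irreducible finite Markov chain on the torus $T_N := (\Z/(2N+1)\Z)^{d}$, with a unique invariant probability $\pi^N_\omega$, and I would set
\[
\mb Q^N(\dd\omega) := (2N+1)^d\, \pi^N_\omega(0)\, \mb P(\dd\omega),
\]
which is invariant for the environment process associated to the periodized walk. The key analytic step is a uniform bound
\[
\mb E\bigl[\bigl((2N+1)^d \pi^N_\omega(0)\bigr)^{p}\bigr] \le C
\]
for some $p > 1$, independent of $N$. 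This is exactly the dual form of the discrete Alexandrov--Bakelman--Pucci maximum principle for balanced non-divergence difference operators (Kuo--Trudinger). Given this, de la Vall\'ee Poussin/weak compactness yields $\mb Q^N \rightharpoonup \mb Q$ along a subsequence with Radon--Nikodym derivative $\dd\mb Q/\dd\mb P \in L^p(\mb P)$; standard approximation then shows $\mb Q$ is invariant for the infinite-volume environment process, while absolute continuity $\mb Q \ll \mb P$ is immediate from the $L^p$ bound. The equivalence $\mb Q \approx \mb P$ follows because $\mb P$ is itself absolutely continuous with respect to $\mb Q$ by a similar uniformity argument on the reciprocal density (using (A2) to keep $\Phi$ uniformly elliptic). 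Ergodicity is then deduced from the i.i.d.\ structure of $\mb P$: any event invariant under time-shifts of $(\evp{i})$ is measurable with respect to the tail $\sigma$-algebra of $(\omega(x))_{x\in\Z^d}$, and Kolmogorov's 0-1 law forces it to have $\mb P$-measure $0$ or $1$, hence the same under $\mb Q$.

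With $\mb Q$ in hand, part (ii) proceeds as follows. Birkhoff's ergodic theorem applied to $(\evp{i})$ under $\mb Q \times P_\omega$ gives
\[
\frac{1}{n} \langle X\rangle_n \;=\; \frac{1}{n}\sum_{i=0}^{n-1} \Phi(\evp{i}) \;\longrightarrow\; E_\Q[\Phi] \;=\; \bar a
\qquad \mb Q\!\times\! P_\omega\text{-a.s.,}
\]
and since $\mb Q \approx \mb P$ this convergence also holds for $\mb P$-almost every $\omega$ in $P_\omega$-probability. The increments $|X_{n+1}-X_n|\le 1$ are bounded, so the Lindeberg condition is trivial. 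The functional martingale invariance principle then delivers weak convergence (under $P_\omega$) of the rescaled path $X_{n^2 t}/n$ to a Brownian motion with covariance $\bar a$. Non-degeneracy $\bar a > 0$ is automatic: (A2) and $\mb Q \approx \mb P$ give $\Phi \ge 2\kappa \rn{I}$ $\mb Q$-a.s., hence $\bar a \ge 2\kappa \rn I > 0$.

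The main obstacle is plainly the $L^{p}$ estimate on the periodic invariant density in part (i): this is where the non-divergence, non-reversible nature of the problem bites, and where one cannot appeal to a reversible divergence-form argument with an explicit invariant measure. The estimate itself rests on a genuine PDE-type input (the discrete ABP/maximum principle), and establishing it with constants independent of $N$ is the technical heart of the theorem. Once that is in place, the rest of the proof---invariance, absolute continuity, ergodicity, and the martingale CLT---is structural and follows a well-trodden path.
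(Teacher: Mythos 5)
The paper does not give a proof of Theorem~\ref{thm:QCLT}; it is quoted from Lawler \cite{L-82}, so the benchmark is his argument, and your outline reproduces its skeleton: periodize the environment, take the Perron--Frobenius invariant density $\pi^N_\omega$ on the torus, establish a uniform-in-$N$ bound $\mb E\bigl[\bigl((2N+1)^d\pi^N_\omega(0)\bigr)^{d/(d-1)}\bigr]\le C$ via the discrete ABP maximum principle, extract a weak $L^{d/(d-1)}$ limit $\mb Q=\rho\,\dd\mb P$, and finish with the martingale functional CLT. Part~(ii) is correct as you wrote it.

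Two steps in part~(i) do not hold as stated. For $\mb P\ll\mb Q$ you invoke ``a similar uniformity argument on the reciprocal density,'' but the periodization scheme gives no uniform bound on $1/\bigl((2N+1)^d\pi^N_\omega(0)\bigr)$; indeed the paper attributes the negative moment $\mb E[\rho^{-p}]<\infty$ to the much later \cite{DG-19}, not to Lawler. The correct argument is qualitative: invariance of $\mb Q$ gives $\rho_\omega(0)=\sum_{|e|=1}\omega(-e,0)\,\rho_\omega(-e)$ $\mb P$-a.s., where $\rho_\omega(x):=\rho(\theta_x\omega)$; by (A2) all the one-step weights are $\ge\kappa>0$, so $\rho_\omega(0)=0$ forces $\rho_\omega\equiv0$, hence $\{\rho>0\}$ is a spatial-shift-invariant event modulo $\mb P$-null sets, and ergodicity of $\mb P$ together with $\int\rho\,\dd\mb P=1$ gives $\mb P(\rho>0)=1$. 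For ergodicity of $(\evp i)$ under $\mb Q\times P_\omega$ you appeal to Kolmogorov's $0$--$1$ law, which presumes i.i.d.; but Theorem~\ref{thm:QCLT} is stated, and Lawler proves it, assuming only ergodicity of $\mb P$ under spatial shifts, so that tool is unavailable. Moreover the right reduction is not to the tail $\sigma$-algebra of $(\omega(x))_x$ but to spatial-shift invariance: if $E_\omega[\mathbbm1_A(\evp1)]=\mathbbm1_A(\omega)$ $\mb Q$-a.s., then uniform ellipticity of the one-step kernel forces $\mathbbm1_A\circ\theta_e=\mathbbm1_A$ $\mb Q$-a.s.\ for each $|e|=1$, hence $A$ is $\theta_x$-invariant for all $x$ modulo null sets, and ergodicity of $\mb P$ (with $\mb Q\approx\mb P$) gives $\mb Q(A)\in\{0,1\}$. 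Both repairs are standard pieces of the Kozlov/Papanicolaou--Varadhan scheme, but as written these two steps of your proposal would not go through.
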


QCLT for the balanced RWRE in static environments under weaker ellipticity assumptions can be found at \cite{GZ-12, BD-14}. For dynamic balanced random environment, QCLT was established in  \cite{DGR-15} and finer results concerning the local limit theorem and heat kernel estimates was obtained at \cite{DG-19}.  We refer to Zeitouni’s lecture notes \cite{OZ-04} for a comprehensive account of results and challenges in RWRE.

Denote the Radon-Nikodym derivative of $\Q$ with respect to $\mb P$ as
\begin{equation}\label{eq:def-rho}
\rho(\omega)=\dd\mb Q/\dd\mb P.
\end{equation}
For any $x\in\Z^d$ and finite set $A\subset\Z^d$, we write 
\[
\rho_\omega(x):=\rho(\theta_x\omega)
\quad \text{ and }
\quad
\rho_\omega(A)=\sum_{x\in A}\rho_\omega(x).
\]
It is known that (see, e.g., \cite{DG-19}) for $\mb P$-almost all $\omega$,  the measure $\rho_\omega(\cdot)$ on $\Z^d$ is the unique (up to a multiplicative constant) invariant measure for the RWRE $(X_n)_{n\ge 0}$.  In this sense, $\Q$ is the {\it steady state} for the environmental process.  To investigate the long term behavior of the RWRE and the homogenization of the corresponding diffusion equations, 
it is crucial to characterize the invariant measure $\rho_\omega$. 

Now that $\Q$ is the limiting ergodic measure of the environment,  it is expected that, as $t\to\infty$, $\psi(\bar\omega_t)\to E_\Q[\psi]$ almost surely for $\psi\in L^1(\mb P)$.  
(Recall the process $\bar\omega_t$ of the environment as viewed from the particle  in \eqref{def:omegabar}.)
When the balanced environment satisfies a finite range of dependence and $\psi$ is an $L^\infty(\mb P)$ local function, it is shown in \cite[Theorem~1.2]{GPT-19} that, with overwhelming $\mb P\otimes P_\omega$-probability,  the average $t^{-1}\int_0^t \psi(\omega_s)\dd s$ converges to $E_\Q[\psi]$ at an algebraic speed  $t^{-\alpha}$.  
For time-reversible random walks in random environment,  Kipnis and Varadhan \cite{KV-86} proved that the process of the environment as viewed from the particle has diffusive behavior.  Further, in the reversible setting where the walk is generated by divergence form operators $\nabla\cdot a(\omega)\nabla$,  algebraic rates for the decay of $E_\omega[\psi(\bar\omega_t)]$ are obtained in \cite{JM-11, GNO-15, PM-15}.  

One of our goals in this paper is to prove a diffusive behavior for the environmental process,  i.e., a CLT for $\tfrac1{\sqrt t}\int_0^t (\psi(\omega_s)-E_\Q\psi)\dd s$ and to investigate the decay rate of $E_\omega[\psi(\bar\omega_t)]-E_\Q\psi$.

As an important feature of the non-divergence form model, $\rho_\omega$ does not have deterministic upper and (nonzero) lower bounds. Moreover, the heat kernel $p_t^\omega(\cdot,\cdot)$ is not expected to have deterministic Gaussian bounds.
For ergodic balanced random environments,  the following stochastic bounds for the invariant measure $\rho_\omega$ and the heat kernel were proved in \cite{L-82,FS84,DG-19}.

For $r\ge 0, t>0$, denote by
\begin{equation}\label{eq:def-mf-h}
\mf h(r,t)=\frac{r^2}{r\vee t}+r\log(\frac{r}{t}\vee 1), \quad
r\ge 0, t>0.
\end{equation} 
\begin{thmx}\label{thm:recall-rho-estimates}
Assume {\rm(A2)} and that the environment is ergodic under spatial shifts $\{\theta_x:x\in\Z^d\}$. There exists a constant $p=p(d,\kappa)>0$ such that
\begin{enumerate}[(i)]
\item\label{item:old-rho-integ}
\[
\mb E[\rho^{d/(d-1)}]<\infty, \qquad \mb E[\rho^{-p}]<\infty;
\]
\item\label{item:recall-vd} $\mb P$-almost surely, for any $r>0$,
\[
\rho_\omega(B_r)\ge C\rho_\omega(B_{2r});
\]
\item\label{item:recall-hke} $\mb P$-almost surely, for all $x\in\Z^d$, $t>0$,
\begin{equation}\label{eq:hke-random}
\frac{c\rho_\omega(0)}{\rho_\omega(B_{\sqrt t})}e^{-C|x|^2/t}
\le 
p_t^\omega(x,0)
\le \frac{C\rho_\omega(0)}{\rho_\omega(B_{\sqrt t})}e^{-c\mf h(|x|,t)}.
\end{equation}
Moreover, for $x\in\Z^d$, $t>0$, 
\begin{align*}
\norm{p_t^\omega(0,x)}_{L^{(d+1)/d}(\mb P)}&\le \frac{C}{(t+1)^{d/2}}e^{-c\mf h(|x|,t)},\\
\norm{p_t^\omega(0,x)}_{L^{-p}(\mb P)}&\ge \frac{c}{(t+1)^{d/2}}e^{-C|x|^2/t}.
\end{align*}	
\end{enumerate}
\end{thmx}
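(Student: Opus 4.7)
The plan is to establish the three items by combining three distinct ingredients: (a) an Alexandrov--Bakelman--Pucci (ABP) inequality in the discrete non-divergence setting to bound $\rho$ from above, (b) a Meyers-type reverse-H\"older argument exploiting the i.i.d.\ structure (A1)--(A2) to bound $\rho$ from below, and (c) Fabes--Stroock style non-divergence parabolic estimates to upgrade integrability of $\rho$ into pointwise and moment heat-kernel bounds. Throughout, the organizing fact is that $\rho_\omega$ is characterized (up to scale) as the unique nonnegative solution of the adjoint equation $L_\omega^* \rho_\omega = 0$, where $L_\omega$ is the generator in \eqref{eq:def-of-L}.

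For item (i), the discrete ABP inequality (Kuo--Trudinger) applied to the Dirichlet Green function of $\tfrac12\tr(\omega\nabla^2\cdot)$ on $B_R$ gives $\|u\|_{L^\infty(B_R)}\lesssim R\,\|f\|_{L^d(B_R,\,\rho_\omega)}$ for solutions of $\tfrac12\tr(\omega\nabla^2 u)=f$ with zero boundary data. Dualizing identifies the $\rho_\omega$-weighted Green mass at $0$ as a bounded linear functional on $L^d(\rho_\omega)$, and a stationarity / ergodic-theorem argument then promotes this to $\mb E[\rho^{d/(d-1)}]<\infty$, the conjugate exponent being precisely the ABP exponent $d$. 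The negative moment $\mb E[\rho^{-p}]<\infty$ is obtained by a reverse-H\"older / Meyers argument: using independence (A1), one shows that $\{\rho_\omega(0)<\lambda\}$ has stretched-exponentially small $\mb P$-probability in $\lambda$, which yields integrability of $\rho^{-p}$ for some $p=p(d,\kappa)>0$.

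Item (ii), the doubling property, follows from the Krylov--Safonov Harnack inequality for the adjoint density: nonnegative solutions of $L_\omega^*v=0$ satisfy an oscillation/Harnack estimate on each ball $B_r$, which on iteration over dyadic scales gives $\rho_\omega(B_r)\ge C\rho_\omega(B_{2r})$ $\mb P$-a.s. For item (iii), the on-diagonal upper bound $p_t^\omega(0,0)\lesssim \rho_\omega(0)/\rho_\omega(B_{\sqrt t})$ comes from a Nash inequality for the semigroup symmetrized against $\rho_\omega$; Davies' perturbation method with linear exponential weights then produces the off-diagonal factor $e^{-c\mf h(|x|,t)}$, the logarithmic correction in $\mf h$ reflecting the fact that for $|x|\gg t$ the walk must behave ballistically (unit steps) rather than diffusively. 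The matching lower bound is the parabolic Harnack inequality together with Aronson chaining along a near-geodesic of length $|x|/\sqrt t$. The two stochastic moment bounds for $p_t^\omega$ then follow from the pointwise bounds combined with (i): H\"older in $\mb P$ distributes the moment across $\rho_\omega(0)\in L^{d/(d-1)}$ and $\rho_\omega(B_{\sqrt t})^{-1}$ (controlled via (ii) and the negative moment of $\rho$), and the exponents balance to give precisely $(d+1)/d$ and $-p$.

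The main obstacle is the simultaneous control of $\rho_\omega$ from above in $L^{d/(d-1)}$ and from below in $L^{-p}$ in item (i). The non-divergence setting offers no variational characterization of $\rho_\omega$, so no $L^2$-based regularization is available; both moment bounds must be extracted from maximum-principle inputs alone, and the negative moment in particular depends crucially on the full independence hypothesis (A1) rather than just ergodicity, since reverse-H\"older on $\rho$ requires a genuine large-deviation mechanism. Once (i) is in hand, the volume doubling (ii) and the Gaussian heat-kernel bounds (iii) fit into the Fabes--Stroock--Krylov--Safonov paradigm via now-standard, if delicate, discrete adaptations.
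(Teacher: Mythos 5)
This is a \emph{recall theorem}: the paper does not prove it, but explicitly attributes item (i) (positive moment) to Lawler \cite{L-82}, item (i) (negative moment) and item (iii) to Deuschel--Guo \cite[Theorem~11]{DG-19}, and item (ii) to Fabes--Stroock \cite[Lemma~2.0]{FS84}. So there is no in-paper proof to compare against, only citations. With that caveat, the overall skeleton you sketch --- ABP/Kuo--Trudinger for the positive moment of $\rho$, Krylov--Safonov-type Harnack for the adjoint to get doubling, and Fabes--Stroock--style manipulations to pass from $\rho$-integrability to heat-kernel moment bounds --- is the right lineage.

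However, there is one genuine and important error in your plan. You write that the negative moment $\mb E[\rho^{-p}]<\infty$ ``depends crucially on the full independence hypothesis (A1) rather than just ergodicity,'' and you propose to obtain it from a stretched-exponential tail on $\{\rho_\omega(0)<\lambda\}$ via independence. But the theorem statement you are asked to prove assumes only (A2) and spatial ergodicity --- (A1) is \emph{not} a hypothesis here. Correspondingly, the cited reference \cite[Theorem~11]{DG-19} proves the negative moment in the (even more general) time-dependent \emph{ergodic} setting, with no i.i.d.\ assumption. A proof route that irreducibly uses i.i.d.\ would establish a strictly weaker statement than the one under review, and your own closing remark asserting the necessity of (A1) is directly contradicted by the paper's attribution. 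The ergodic proof proceeds instead through deterministic estimates on Green's functions / exit-time quantities (in the spirit of Fabes--Stroock and the parabolic Harnack inequality for the adjoint), which hold $\mb P$-a.s.\ for each fixed $\omega$ and are then integrated using stationarity and the ergodic theorem; independence is only exploited later in the present paper (Theorem~\ref{thm:hk-bounds}) to upgrade the polynomial moments of Theorem~\ref{thm:recall-rho-estimates}\eqref{item:old-rho-integ} to exponential ones.

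A secondary, smaller issue: in the ABP step you write the right-hand side as $\|f\|_{L^d(B_R,\rho_\omega)}$. The discrete Kuo--Trudinger ABP inequality gives an \emph{unweighted} $L^d$ norm on the right (possibly with a geometric factor involving $\det(\omega)$); the weight $\rho_\omega$ enters only afterward, when you rewrite the Green function $G_R(x,0)$ in terms of the adjoint solution $G_R(x,0)/\rho_\omega(0)$ and appeal to stationarity. As written, the weighted ABP statement is not the correct input and would short-circuit the duality you need.
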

The positive moment bound in \eqref{item:old-rho-integ} was obtained by Lawler \cite{L-82}, and the negative moment bound in \eqref{item:old-rho-integ} is a special case of the bound proved by Deuschel and the first named author  \cite[Theorem~11]{DG-19} in the more general  time-dependent ergodic environment setting. The volume-doubling property \eqref{item:recall-vd} was proved by Fabes and Stroock \cite[Lemma~2.0]{FS84}. The heat kernel bounds together with its integrability in the form of \eqref{item:recall-hke} were obtained in \cite[Theorem~11]{DG-19} for in the more general dynamic ergodic setting.

Roughly speaking, the term $\frac{\rho_\omega(0)}{\rho_\omega(B_{\sqrt t})}$ is the long term ratio between the time the RWRE visits the origin and the time it spends in the ball $B_{\sqrt t}$.  For 
the special deterministic environment $a\equiv I$, i.e.,  when the RWRE is a simple random walk,  we have $\rho\equiv 1$ and this constant is $Ct^{-d/2}$. However, in a random environment, one should not expect $\frac{\rho_\omega(0)t^{d/2}}{\rho_\omega(B_{\sqrt t})}$ to have deterministic bounds.  Hence, to understand the bounds of the invariant measure and the heat kernel, it is crucial to obtain their stochastic integrability. 

Although for general ergodic environment, positive and negative moment bounds for the invariant measure and the heat kernel were already obtained in Theorem~\ref{thm:recall-rho-estimates} \eqref{item:old-rho-integ}\eqref{item:recall-hke}, it is a natural question whether better mixing properties of the environment would yield better moment bounds for these quantities.

Our another goal in this paper is to show that,  in our i.i.d. balanced environment,  both $\rho_\omega$ and the heat kernel have positive and negative exponential moment bounds.

In the course of our proof of better moment bounds for the invariant measure and the heat kernels, the following quantitative homogenization result for the non-divergence form operator $L_\omega$ will be employed. 

\begin{propx}
\label{thm:quant-homo}
Assume {\rm(A1), (A2), (A3)}. 
Recall the measure $\Q$ in Theorem~\ref{thm:QCLT}. 
Suppose $g\in C^\alpha(\partial\B_1)$, $f\in C^{\alpha}(\B_1)$ for some $\alpha\in(0,1]$, and $\psi$ is a measurable function of $\omega(0)$ with $\norm{\psi/\tr\omega}_\infty <\infty$.
For any $q\in (0,d)$, there exist a random variable $\ms X_q=\ms X_q(\omega,d,\kappa)$ with $\mb E[\exp(c\ms X_q^d)]<\infty$, and a constant $\beta=\beta(d,\kappa,q)\in(0,1)$ such that for any $y\in B_{3R}$, the solution $u$ of 
\[
\left\{
\begin{array}{lr}
\frac{1}{2}\tr(\omega\nabla^2 u(x))=\frac{1}{R^2}f(\frac{x-y}{R})\psi(\theta_{x-y}\omega) & x\in B_R(y),\\
u(x)=g(\frac{x-y}{|x-y|}) & x\in\partial B_R(y)
\end{array}
\right.
\]
satisfies, with $A_1=\norm{f}_{C^{0,\alpha}(\B_1)}\norm{\tfrac{\psi}{\tr(\omega)}}_\infty+[g]_{C^{0,\alpha}(\partial\B_1)}$,
\begin{equation}\label{eq:homo-err}
\max_{x\in B_R(y)}\Abs{u(x)-\bar u(\tfrac{x-y}{R})}
\lesssim 
A_1(1+\ms X_q R^{-q/d})R^{-\alpha\beta}, 
\end{equation}
where $\bar u$ solves \eqref{eq:effective-ellip} with $\bar a=E_{\Q}[\omega/\tr\omega]>0$ and $\bar\psi=E_{\Q}[\psi/\tr\omega]$.
\end{propx}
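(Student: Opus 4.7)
The plan is to combine a two-scale (perturbed-test-function) expansion with the discrete Alexandrov-Bakelman-Pucci (ABP) maximum principle of Kuo-Trudinger, and to quantify the fluctuations of the correctors by exploiting the i.i.d.~structure in (A1) and (A3). The deterministic rate $R^{-\alpha\beta}$ will come from the interior Schauder regularity of $\bar u$ combined with an optimal choice of mesoscopic scale, while the random variable $\ms X_q$ will absorb concentration errors at that scale, with a union bound over $\sim R^d$ positions giving the stretched-exponential tail.

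At a mesoscopic scale $r=R^\theta$ with $\theta\in(0,1)$ to be optimized, I would introduce finite-volume correctors: for each $(i,j)$, let $\phi_{ij}$ solve $\tfrac12\tr(\omega\nabla^2\phi_{ij})=\bar a_{ij}-\omega_i\delta_{ij}/\tr\omega$ in $B_r$ with zero boundary data, and analogously let $\chi$ solve $\tfrac12\tr(\omega\nabla^2\chi)=\bar\psi-\psi/\tr\omega$ in $B_r$. Uniform ellipticity (A2) together with the heat-kernel bounds of Theorem \ref{thm:recall-rho-estimates} give sup-norm control on $\phi_{ij},\chi$ in terms of $r$. I would then form the two-scale expansion
\[
v(x) = \bar u\left(\tfrac{x-y}{R}\right) + R^{-2}\sum_{i,j}\phi_{ij}(x-y)\,\partial_{ij}\bar u\left(\tfrac{x-y}{R}\right) - R^{-2}\chi(x-y)\, f\left(\tfrac{x-y}{R}\right),
\]
suitably cut off near $\partial B_R(y)$ to honor the Dirichlet data. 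Substituting $v$ into the operator and invoking both the corrector equations and the effective equation \eqref{eq:effective-ellip}, the leading terms cancel, leaving (i) a deterministic Schauder-type error from the discrete third differences of $\bar u$ (controlled by $A_1$ via interior $C^{2,\alpha}$-regularity for \eqref{eq:effective-ellip}), (ii) a boundary-layer contribution controlled by the Hölder data, and (iii) a random mismatch between $\omega/\tr\omega$, $\psi/\tr\omega$ and their effective averages over $r$-scale boxes.

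Applying the discrete Kuo-Trudinger ABP inequality to $u-v$ in $B_R(y)$ converts the pointwise residual into an $\ell^d$-type average over the box. The random part reduces to fluctuations of i.i.d.~box averages such as
\[
\mc E_x = \frac{1}{|B_r|}\sum_{z\in B_r(x)}\left(\frac{\omega_i(z)\delta_{ij}}{\tr\omega(z)}-\bar a_{ij}\right),
\]
which by (A1)(A3) are sums of bounded i.i.d.~random variables, so Hoeffding/Bernstein concentration gives sub-Gaussian tails on scale $r^{-d/2}$. A union bound over the $\asymp R^d$ centers in $B_R$, together with the $L^d$-integration step in ABP, consolidates all of these fluctuations into a single random variable $\ms X_q$ with stretched-exponential tail $\mb E[\exp(c\ms X_q^d)]<\infty$, at the cost of losing a factor $R^{-q/d}$ for arbitrary $q<d$. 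Optimizing $\theta$ to balance the Schauder error against the concentration error then yields the algebraic deterministic rate $R^{-\alpha\beta}$ with $\beta=\beta(d,\kappa,q)\in(0,1)$.

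The main obstacle is that the invariant density $\rho_\omega$ has no deterministic upper or lower bound, so one cannot directly identify the empirical averages of $\omega/\tr\omega$ with the $\Q$-averages $\bar a$: Theorem \ref{thm:recall-rho-estimates}(i) provides only polynomial moments of $\rho^{\pm 1}$, not the stretched-exponential tails needed for $\ms X_q$. To bypass this, the argument must be arranged so that only the i.i.d.~arithmetic averages of bounded functions of $\omega(0)$ and $\psi$ enter the tail of $\ms X_q$ (where Bernstein applies directly), while the $\rho_\omega$-dependent factors entering through the definitions of $\bar a,\bar\psi$ are absorbed by Hölder's inequality using the moment bounds in Theorem \ref{thm:recall-rho-estimates}(i), again at the price of the $R^{-q/d}$ factor that appears in \eqref{eq:homo-err}.
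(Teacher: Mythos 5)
There is a genuine gap at the heart of your argument, specifically in the claim that the random error reduces to ``fluctuations of i.i.d.\ box averages... so Hoeffding/Bernstein concentration gives sub-Gaussian tails on scale $r^{-d/2}$.'' The box averages $\mc E_x$ you write down are centered at $\bar a_{ij}=E_\Q[\omega_i\delta_{ij}/\tr\omega]$, but a simple arithmetic average of i.i.d.\ random variables $\omega_i(z)/\tr\omega(z)$ concentrates around $\mb E_{\mb P}[\omega_i/\tr\omega]$, which is generically \emph{different} from $\bar a_{ij}$ because $\Q\not=\mb P$. So $\mc E_x$ carries a non-vanishing deterministic bias of order one, and neither Hoeffding nor your proposed Hölder-plus-moments patch removes it: the bias is not small, only random on top of it is. This is precisely the point where non-divergence form homogenization departs from the divergence form picture.

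More fundamentally, your sup-norm control on the finite-volume correctors $\phi_{ij},\chi$ ``in terms of $r$'' is not available through the elementary route you describe. In non-divergence form, the sublinear growth of correctors with a quantified rate is the deep content of Armstrong--Smart's theory, which goes through a multiscale convex-analysis argument (subadditive/superadditive quantities and their convergence rate), not through concentration of empirical averages followed by ABP. The paper instead invokes precisely this machinery via \cite[Proposition~2.1]{GPT-19}: the concentration estimate $\mb P(\max_{B_R}|\phi|\ge CR^{2-\delta_p})\le Ce^{-cR^p}$ for the corrector $\phi$ solving $L_\omega\phi=\bar a-a$. It then defines ``bad points'' as those $x$ at which the exit-time average of $\zeta(\bar\omega_i)$ over the random walk deviates from $E_\Q[\zeta]$, counts them, and feeds that count into the ABP inequality to get the final bound; the random variable $\ms X$ is the scale-normalized bad-point count, whose stretched-exponential tail follows from the corrector concentration plus a union bound. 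This route works because the random walk's occupation measure, not the uniform counting measure, is what naturally sees $E_\Q$. Your approach is structurally different (two-scale expansion instead of good/bad classification), but as written it does not bridge the $\mb P$-vs-$\Q$ gap and takes for granted the corrector bound that is in fact the hardest ingredient.
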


\begin{remark}
Our Proposition~\ref{thm:quant-homo} in the above is a version of \cite[Theorem~1.5]{GPT-19}, which can be considered as a discrete version of Armstrong, Smart \cite[Theorem~1.2]{AS-14}.
We remark that, compared to the aforementioned results in \cite{AS-14,GPT-19}, a difference in Proposition~\ref{thm:quant-homo} is that the ball is allowed to be centered at any point $y\in B_{3R}$ whereas the random variable $\ms X$ stays the same.  This subtle feature will allow us to define a ``homogenization radius"  which will be useful later in our proof of the bounds for the Green functions.

The proof of Proposition~\ref{thm:quant-homo}, which is a small modification of that of \cite[Theorem~1.5]{GPT-19}, can be found in the Appendix.

In terms of the quantitative homogenization of non-divergence form operators in the PDE setting, Yurinski derived a second moment estimate of the homogenization error in \cite{Yur-88} for linear elliptic case, and Caffarelli, Souganidis \cite{CaSu10} proved a logarithmic convergence rate for the nonlinear elliptic case. Afterwards, Armstrong, Smart \cite{AS-14}, and Lin, Smart \cite{LS-15} achieved an algebraic convergence rate for fully nonlinear elliptic equations, and fully nonlinear parabolic equations, respectively. Armstrong, Lin \cite{AL-17} obtained quantitative estimates for the approximate corrector problems.
\end{remark}

\subsection{Main results}
For RWRE in an i.i.d. balanced, uniformly elliptic environment, 
we will establish natural bounds for both the invariant measure and the heat kernel (Theorem~\ref{thm:hk-bounds}) which possess both positive and negative exponential moments, greatly improving the stochastic integrability in Theorem~\ref{thm:hk-bounds} for the ergodic setting.
We then prove the optimal diffusive decay of the semigroup generated by the heat kernel for $d\ge 3$ in Theorem~\ref{thm:var_decay}. As consequences,  we obtain a functional central limit theorem (CLT) for the environment viewed from the particle process, and deduce the existence of a stationary corrector in dimension $d\ge 5$.


\begin{theorem}\label{thm:hk-bounds}
Assume {\rm(A1), (A2)}, and $d\ge 2$.
Let $s=s(d,\kappa)=2+\tfrac{1}{2\kappa}-d\ge 2$.
For any $\error>0$, there exists a random variable $\tx(\omega)=\tx(\omega,d,\kappa,\error)>0$ with 
$\mb E[\exp(c\tx^{d-\error})]<\infty$ such that the following properties hold.
\begin{enumerate}[(a)]
\item\label{item:rho-bounds} For $\mb P$-almost all $\omega$,
\[
c\tx^{-s}\le \rho(\omega)\le C\tx^{d-1}.
\]
In particular,  for any 
$q\in(-\tfrac{d}{s},\tfrac{d}{d-1})$, we have
\[
\mb E[\exp\left(c\rho^{q}\right)]<\infty.
\]
\item\label{item:rho-hke-expmmt}
Recall the function $\mf h$ in \eqref{eq:def-mf-h}. 
For any $r\ge 1$ and $\mb P$-almost all $\omega$,
\[
c\tx^{-s}\le
\frac{r^d\rho_\omega(0)}{\rho_\omega(B_r)}
\le
C\tx^{d-1}.
\]
\item\label{item:hke-expmmt}
For any $x\in\Z^d, t>0$, and $\mb P$-almost all $\omega$,
\begin{align*}
p_t^\omega(x,0)
&\le  
C\tx^{d-1}(1+t)^{-d/2}e^{-c\mf h(|x|,t)},\\
p_t^\omega(x,0)&\ge
c\tx^{-s}(1+t)^{-d/2}e^{-C|x|^2/t}.
\end{align*}	
\end{enumerate}
\end{theorem}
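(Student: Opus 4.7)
The main new ingredient is Proposition~\ref{thm:quant-homo}. My plan is to use it to distill a single random ``homogenization radius'' $\tx(\omega)$ such that on every ball $B_R(y)\subset B_{3R_0}$ of radius $R\ge\tx$ the solution $u$ of any of finitely many model Dirichlet problems lies within an algebraic factor of its homogenized counterpart $\bar u$. Fixing $q$ slightly below $d$ and a few test data $(f,g,\psi)$ (for instance $\psi=\tr\omega$ or coordinate-weighted variants, $f\equiv\pm 1$, $g\equiv 0$), I would package all of the homogenization estimates that enter the rest of the proof into the single scale $\tx$. Since $\mb E[\exp(c\ms X_q^d)]<\infty$ uniformly over the finitely many test data, the bound $\mb E[\exp(c\tx^{d-\error})]<\infty$ follows directly from the tail of $\ms X_q$ after choosing $q=d-\error$ and setting $\tx$ to be an appropriate power of $\ms X_q$ so that the deterministic error $(1+\ms X_q R^{-q/d})R^{-\alpha\beta}$ in \eqref{eq:homo-err} is small once $R\ge\tx$.

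\textbf{Part (a): bounds on $\rho$.} The starting point is that $\rho_\omega$ is characterized, up to a multiplicative constant, as the unique positive solution of the discrete adjoint equation $\sum_{i=1}^d\nabla_i^2\!\bigl(\tfrac{\omega_i(\cdot)}{2\tr\omega(\cdot)}\rho_\omega(\cdot)\bigr)=0$. For the upper bound $\rho(\omega)\le C\tx^{d-1}$, I would first invoke homogenization with $\psi=\tr\omega$ and $f\equiv -1$, whose homogenized solution is an explicit quadratic, to show that $\rho_\omega(B_R)\asymp R^d$ for every $R\ge\tx$; this is the quantitative form of the a.s.\ ergodic identity $E_\Q[\rho]=1$. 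I would then turn the ball-average bound into a pointwise one via a reverse H\"older / mean value inequality for nonnegative solutions of the adjoint equation applied at scale $r=\tx$. The loss of the factor $\tx^{d-1}$ (rather than $O(1)$) mirrors the classical $L^{d/(d-1)}$ integrability of $\rho$ obtained by Lawler, and is the natural sharp exponent coming from the ABP/Alexandrov estimate for $L_\omega$. The matching lower bound $\rho(\omega)\ge c\tx^{-s}$ with $s=2+\tfrac{1}{2\kappa}-d$ comes from a weak $L^\varepsilon$ Harnack inequality of Krylov-Safonov type for the same adjoint equation: since $\rho_\omega(B_\tx)/|B_\tx|\asymp 1$, transferring the average to a pointwise lower bound at the origin costs a power of $\tx$ depending only on the Krylov-Safonov constant, and one checks that this power is precisely $s$, with the $\tfrac{1}{2\kappa}$ tracking the dependence on the ellipticity $\kappa$.

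\textbf{Parts (b), (c), and the main obstacle.} Part (b) reduces to part (a) combined with the volume-doubling property of Theorem~\ref{thm:recall-rho-estimates}\eqref{item:recall-vd}: applying the pointwise bound of part (a) at the center and using the homogenization-based equivalence $\rho_\omega(B_r)\asymp r^d$ at any scale $r\ge\tx$, while doubling to reduce any $r<\tx$ to the scale $\tx$, the ratio $r^d\rho_\omega(0)/\rho_\omega(B_r)$ is squeezed between $c\tx^{-s}$ and $C\tx^{d-1}$. Part (c) is then an immediate substitution of the ratio bound of (b) into the quenched heat kernel estimate \eqref{eq:hke-random} of Theorem~\ref{thm:recall-rho-estimates}\eqref{item:recall-hke}, and the exponential integrability of $\tx^{d-\error}$ propagates unchanged. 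The principal technical obstacle is the reverse H\"older / weak Harnack step in part (a): one must quantify, uniformly in $\omega$ and at the random scale $\tx$, how a pointwise value of $\rho_\omega$ is controlled above and below by its $B_\tx$-average, with constants depending only on $d$ and $\kappa$, and such that the resulting powers of $\tx$ are exactly $d-1$ and $s$. I expect this to rely on combining a quantitative ABP estimate for $L_\omega$ at the scale $\tx$ with a Fabes-Stroock type duality argument that exchanges $L_\omega$ and $L_\omega^\ast$.
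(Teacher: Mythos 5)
Your reading of the architecture is right in two places: you correctly recognize that the random variable $\tx$ should be a ``homogenization radius'' manufactured from $\ms X_q$ with $q=d-\error$, and you correctly reduce parts \eqref{item:rho-hke-expmmt} and \eqref{item:hke-expmmt} to part \eqref{item:rho-bounds} combined with the quenched heat kernel estimate \eqref{eq:hke-random} of Theorem~\ref{thm:recall-rho-estimates}. (Minor slip: the ergodic identity you want is $E_{\mb P}[\rho]=1$, not $E_{\Q}[\rho]=1$.) But the heart of the theorem is part \eqref{item:rho-bounds}, and there your proposal has a genuine gap. You want to (i) use Proposition~\ref{thm:quant-homo} with $\psi=\tr\omega$, $f\equiv -1$ to show $\rho_\omega(B_R)\asymp R^d$ for $R\ge\tx$, and then (ii) transfer the ball average to a pointwise bound with powers $\tx^{d-1}$ and $\tx^{-s}$ via a ``mean value / weak Harnack'' for the adjoint equation. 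Step (i) does not follow: the Dirichlet problem with $f\equiv-1$, $\psi=\tr\omega$ homogenizes the exit time $R^{-2}E^x_\omega[\tau_R]$ to a quadratic, but this controls $\sum_y G_R(x,y)$, not $\rho_\omega(B_R)$. The bridge from the forward operator $L_\omega$ to the invariant density $\rho_\omega$ runs through the Green function $G_R(x,y)$ via its adjoint structure, and you have elided that entire object. Step (ii) is flagged by you as ``the principal technical obstacle,'' but no known interior Harnack or reverse H\"older inequality for the adjoint equation produces the specific exponents $d-1$ and $s=2+\tfrac{1}{2\kappa}-d$; a generic Krylov--Safonov weak Harnack gives an unquantified small positive power and would not track the $\kappa$-dependence you want.

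What the paper actually does is prove two-sided \emph{Green function} bounds first (Theorem~\ref{thm:Green-bound}), $\tx^{-s}[U(|x|+1)-U(R+2)]\lesssim G_R^\omega(x)\lesssim \tx^{d-1}[U(|x|+1)-U(R+2)]$, via a barrier/maximum-principle argument at the homogenization scale; the exponents $d-1$ and $s$ come entirely from the explicit test functions in Lemmas~\ref{lem:tfab}, \ref{lem:eta}, and Definitions~\ref{def:h}, \ref{def:ell} (in particular $s=d-2-2\theta$ with $\theta=1/(4\kappa)$). With the Green function bounds in hand, the $\rho$ bounds fall out by two short arguments that are not the mean-value estimate you describe: for the upper bound, integrate the heat kernel lower bound $p_t^\omega(x_0,0)\ge c\rho_\omega(0)/\rho_\omega(B_{\sqrt t})$ over $t\in[r^2/2,r^2]$ and compare against the Green function upper bound at $|x_0|=r$ (Corollary~\ref{cor:green-time} for $d=2$, Corollary~\ref{cor:a_and_g} for $d\ge3$); for the lower bound, note $v(x)=G_r(y_0,x)/\rho_\omega(x)$ solves the adjoint equation, apply the \emph{full} adjoint Harnack of \cite[Theorem~6]{DG-19} (no loss of exponent there) to get $G_r(y_0,0)\rho_\omega(B_{r/4})/\rho_\omega(0)\asymp G_r(y_0,B_{r/4})\le E_\omega^{y_0}[\tau_r]\le Cr^2$ by optional stopping, and then use the Green function lower bound. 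To repair your proposal you would need to supply the Green function bounds (or an equivalent intermediary) as a separate theorem — that is where the barrier construction and the quantitative homogenization actually enter, and it is the content you have skipped.
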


Recall the continuous time RWRE $(Y_t)_{t\ge 0}$ in Definition~\ref{def:rwre-continuous}. With abuse of notation, we still denote the process of the environment viewed from the particle as
\[
\evp{t}:=\theta_{Y_t}\omega.
\]

Following Gloria, Neukamm, Otto \cite{GNO-15}, for any measurable function $\zeta:\Omega\to\R$,  we define its {\it stationary extension} $\bar\zeta:\Z^d\times\Omega\to\R$ as
\[
\bar\zeta(x)=\bar \zeta(x;\omega):=\zeta(\theta_x\omega).
\]
Define the semigroup $P_t$, $t\ge 0$, on $\R^\Omega$ by
\[
P_t \zeta(\omega)=E_\omega^0[\zeta(\evp{t})]=\sum_z p_t^\omega(0,z)\bar \zeta(z;\omega).
\]

 The following theorem estimates the speed of decorrelation of the environmental process $\evp{t}$ from the original environment. It gives a rate $t^{-d/4}$ of decay for the semigroup, which is optimal. 
 A function $\zeta:\Omega\to\R$ is said to be {\it local} if it depends only on the environment $\{\omega(x):x\in S\}$ in a finite set $S\subset\Z^d$.  Such a set $S$ is called the {\it support} of $\zeta$ and denoted by $\supp(\zeta)$.
 
\begin{theorem}
\label{thm:var_decay}
Assume {\rm(A1), (A2)}, and $d\ge 3$. For any local measurable function $\zeta:\Omega\to\R$ with $\norm{\zeta}_\infty\le 1$ and $t\ge 0$, we have, for $C=C(d,\kappa,\#\supp(\zeta))$,
\begin{align}
&\var_{\Q}(P_t \zeta)\le C(1+t)^{-d/2};\label{eq:var_decay_q}\\
&\norm{P_t\zeta-E_\Q\zeta}_{L^1(\mb P)}+\norm{P_t\zeta-\mb E[P_t\zeta]}_{L^p(\mb P)}\le C_p(1+t)^{-d/4}
\quad \text{ for all }p\in(0,2). 
\label{eq:var_decay_p}
 \end{align}	 
\end{theorem}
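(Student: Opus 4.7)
Set $\tilde \zeta := \zeta - E_\Q\zeta$, so that $E_\Q[P_t\tilde\zeta]=0$ by the invariance of $\Q$. For \eqref{eq:var_decay_q}, the strategy is to exploit the averaging at scale $\sqrt t$ produced by the heat kernel, combined with a quantitative homogenization step. Using the semigroup property,
$$ P_t\tilde\zeta(\omega) = \sum_{z\in\Z^d} p_{t/2}^\omega(0,z)\,w(z,\omega), \qquad w(z,\omega):=(P_{t/2}\tilde\zeta)(\theta_z\omega). $$
A parabolic analogue of Proposition~\ref{thm:quant-homo} (obtained either by adapting the proof of \cite{GPT-19} to the parabolic setting, or via a Laplace transform in time from the elliptic version) approximates the random kernel $p^\omega_{t/2}(0,z)$ by the deterministic Gaussian kernel $\bar p_{t/2}(z)$ of the effective diffusion with covariance $\bar a$, with error controlled by a random constant having stretched exponential moments. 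For the principal term $\sum_z \bar p_{t/2}(z) w(z,\omega)$, the mean-zero stationary random field $\{w(z,\cdot)\}_z$ under $\Q$ yields
$$ \var_\Q\Big(\sum_z \bar p_{t/2}(z)\,w(z,\cdot)\Big) = \sum_{z_1,z_2}\bar p_{t/2}(z_1)\,\bar p_{t/2}(z_2)\,\cov_\Q\big(w(z_1),w(z_2)\big). $$
Combining the on-diagonal estimate $\|\bar p_{t/2}\|_{\ell^2}^2 \asymp (1+t)^{-d/2}$ (here $d\ge 3$ enters to make the sum convergent in the decay regime) with a summable covariance decay $|\cov_\Q(w(z_1),w(z_2))|\lesssim \phi(|z_1-z_2|)$, $\phi\in\ell^1(\Z^d)$, Young's convolution inequality produces the desired rate. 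The covariance decay itself stems from a second application of Proposition~\ref{thm:quant-homo} to $P_{t/2}\tilde\zeta$, showing it is approximately a local function in a bounded neighborhood of $\supp(\zeta)$, which makes $w(z_1),w(z_2)$ approximately independent for $|z_1-z_2|\gg\sqrt t$.

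For \eqref{eq:var_decay_p}, I would first obtain a companion bound $\var_\mb P(P_t\zeta)\le C(1+t)^{-d/2}$ by applying the Efron--Stein inequality
$$ \var_\mb P(P_t\zeta) \le \tfrac12 \sum_{y\in\Z^d}\mb E\big[\,|P_t\zeta-(P_t\zeta)^{(y)}|^2\,\big], $$
where $(P_t\zeta)^{(y)}$ resamples $\omega(y)$ independently. The pointwise sensitivity is controlled via a Duhamel coupling of two walks differing only at $y$, using the heat-kernel upper bound of Theorem~\ref{thm:hk-bounds}(c) together with the decay of $P_{t-s}\tilde\zeta$ (built from the same homogenization analysis as the previous step). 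Jensen's inequality then gives the centered $L^p(\mb P)$-bound for every $p\in (0,2)$. For the $L^1(\mb P)$-bound centered at $E_\Q\zeta$, one writes
$$ \|P_t\zeta - E_\Q\zeta\|_{L^1(\mb P)} \le \|P_t\zeta - \mb E[P_t\zeta]\|_{L^1(\mb P)} + |\mb E[P_t\zeta] - E_\Q\zeta|, $$
and uses $\mb E[P_t\zeta] - E_\Q\zeta = \mb E[(1-\rho)(P_t\zeta - \mb E[P_t\zeta])]$ (since $E_\Q[P_t\zeta]=E_\Q\zeta=\mb E[\rho P_t\zeta]$ and $\mb E[1-\rho]=0$), which by Cauchy--Schwarz and $\mb E[\rho^2]<\infty$ (a consequence of Theorem~\ref{thm:hk-bounds}(a)) is also $\lesssim (1+t)^{-d/4}$.

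The hardest ingredient is the covariance decay used in the first step: quantifying the near-independence of $w(z_1)$ and $w(z_2)$ for $|z_1-z_2|\gg\sqrt t$ under the non-product measure $\Q$ calls for a delicate second round of quantitative homogenization, and relies crucially on the stretched exponential integrability of the homogenization radius in Proposition~\ref{thm:quant-homo} and of the random constant $\tx$ in Theorem~\ref{thm:hk-bounds}.
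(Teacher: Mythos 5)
Your proposal takes a genuinely different route from the paper, but the route has a gap that is fatal as stated. The paper's proof of \eqref{eq:var_decay_q} never approximates $p^\omega_t$ by the deterministic Gaussian and never invokes decorrelation of the field $w(z,\cdot)=P_{t/2}\tilde\zeta(\theta_z\omega)$ under $\Q$; instead it applies the $L^p$ Efron--Stein inequality \eqref{eq:bblm} with respect to the \emph{product} measure $\mb P$, writes the vertical derivative $\vd y\bar v(t,\cdot)$ via Duhamel (display \eqref{eq:vd_duhamel}), estimates the resulting two terms using the exponential-moment heat-kernel bounds of Theorem~\ref{thm:hk-bounds}, and most importantly closes the argument with the Caccioppoli-type identity of Lemma~\ref{lem:caccip}, $\tfrac{\dd}{\dd t}\var_\Q(v(t))\lesssim -\sum_e E_\Q[(\bar v(t,e)-\bar v(t,0))^2]$, which converts the Efron--Stein output into a Gronwall-type integral inequality for $u(t)=\var_\Q(v(t))$ that is closed by a lemma of Andres--Neukamm. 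This design deliberately sidesteps any correlation-decay statement under the non-product measure $\Q$.

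The gap in your first step is the claimed ``summable covariance decay'' $|\cov_\Q(w(z_1),w(z_2))|\lesssim \phi(|z_1-z_2|)$ with $\phi\in\ell^1(\Z^d)$ uniformly in $t$. First, it is in tension with your own explanation: you say decorrelation happens only for $|z_1-z_2|\gg\sqrt t$, but then any bound $\phi$ compatible with that must have $\norm{\phi}_{\ell^1}\gtrsim t^{d/2}$, which exactly cancels the $t^{-d/2}$ gain from $\norm{\bar p_{t/2}}_{\ell^2}^2$ and yields no decay at all. Second, even setting aside the scale, proving any quantitative decorrelation of $\{w(z,\cdot)\}_z$ under $\Q$ is essentially the content of the theorem (it controls $\var_\Q(P_s\tilde\zeta)$ for local $\zeta$), so as written the argument is circular. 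Third, $\Q$ is not a product measure; the i.i.d.\ structure is only available under $\mb P$, which is precisely why the paper runs Efron--Stein under $\mb P$ and then converts back to $\Q$ using the $\rho^{\pm}$ moment bounds from Theorem~\ref{thm:hk-bounds}. Finally, your parabolic version of Proposition~\ref{thm:quant-homo} is invoked but not established; the paper only proves the elliptic statement. Your plan for \eqref{eq:var_decay_p} also leans on an $L^2(\mb P)$ bound (via Cauchy--Schwarz against $1-\rho$) that the available integrability of $\rho$ does not directly supply; the paper instead interpolates only up to $L^p(\mb P)$, $p<2$, via H\"older with a negative moment of $\rho$, and derives the $L^1(\mb P)$ bound from $\var_\Q$ by a single Cauchy--Schwarz with $\rho^{-1}$.
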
 

For divergence form operators, optimal diffusive decay of the semigroup generated by the heat kernel was obtained by Gloria, Neukamm, Otto \cite{GNO-15}, de Buyer, Mourrat \cite{PM-15}.
Our proof of Theorem \ref{thm:var_decay} follows the approach of \cite{GNO-15}, which uses an Efron-Stein type inequality and the Duhamel representation formula for the vertical derivative. 
However, unlike the divergence form setting \cite{GNO-15}, there are no deterministic Gaussian bounds for the heat kernel, and the steady state $\Q$ of the environment process $(\evp t)_{t\ge 0}$ is not the same as the original measure $\mb P$. To overcome these difficulties, our heat kernel estimates and the (negative and positive) moment bounds of the Radon-Nikodym derivative $\tfrac{\dd \mb P}{\dd\Q}$ in Theorem~\ref{thm:hk-bounds} play crucial roles. (Another feature of our non-divergence setting that worth mentioning is that there is no Caccioppoli estimates for $p>2$. See Lemma~\ref{lem:caccip}.)

As a consequence of Theorem~\ref{thm:var_decay} and the CLT of \cite{DL-03}, we obtain a functional CLT for an additive functional of the environmental process. Recall that $(\evp t)_{t\ge 0}$ is an ergodic sequence under the law $\Q\times P_\omega$ and time shifts. The following CLT says that, when $d\ge 3$, the fluctuation around the ergodic mean is approximately Gaussian under the diffusive rescaling. 

\begin{corollary}
\label{cor:FCLT}
Assume {\rm(A1), (A2)}. Let $d\ge 3$. For $\mb P$-almost all $\omega$ and any bounded measurable local function $\zeta$ of the environment with $E_\Q \zeta=0$, the $P_\omega$ law of 
\[
\frac{1}{\sqrt t}\int_0^t \zeta(\evp s)\dd s
\]
converges weakly to a Brownian motion with a deterministic diffusivity constant.
\end{corollary}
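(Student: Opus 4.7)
The strategy is to view $(\evp t)_{t\ge 0}$ as a stationary, ergodic Markov process on $\Omega$ under $\Q\times P_\omega$ (Theorem~\ref{thm:QCLT}), verify a Kipnis--Varadhan style spectral hypothesis using Theorem~\ref{thm:var_decay}, and then apply the quenched CLT of Derriennic--Lin \cite{DL-03} to the additive functional $\int_0^t \zeta(\evp s)\,ds$. The result is first obtained for $\Q$-a.e.\ $\omega$ and then transferred to $\mb P$-a.e.\ $\omega$ using the mutual absolute continuity of $\mb P$ and $\Q$ provided by Theorem~\ref{thm:hk-bounds}(\ref{item:rho-bounds}).

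Concretely, since $E_\Q\zeta=0$ and $\Q$ is invariant for $P_t$, we have $E_\Q[P_t\zeta]=0$, and Theorem~\ref{thm:var_decay} gives
\[
\|P_t\zeta\|_{L^2(\Q)}^2=\var_\Q(P_t\zeta)\le C(1+t)^{-d/2}.
\]
Since $d\ge 3$, this yields
\[
\int_0^\infty \|P_t\zeta\|_{L^2(\Q)}^2\,dt<\infty,
\]
which is the Kipnis--Varadhan spectral condition for the generator of the semigroup $P_t$ on $L^2(\Q)$. It ensures the existence of a martingale approximation $\int_0^t\zeta(\evp s)\,ds=M_t+R_t$, where $(M_t)$ is a square-integrable $P_\omega$-martingale with stationary increments and $R_t/\sqrt t\to 0$ in $L^2(\Q\times P_\omega)$. (When $d\ge 5$ one even obtains the honest stationary corrector $\phi=\int_0^\infty P_t\zeta\,dt\in L^2(\Q)$ solving the Poisson equation with right-hand side $\zeta$; for $d=3,4$ one uses a truncation/approximation scheme as in Kipnis--Varadhan.) The martingale functional CLT, combined with ergodicity of $(\evp t)$ and the boundedness of $\zeta$, then gives convergence of $t^{-1/2}M_{t\cdot}$ to a Brownian motion whose diffusivity $\sigma^2$ is deterministic by ergodicity under $\Q\times P_\omega$.

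To pass from the annealed statement (which is the standard consequence of the Kipnis--Varadhan condition) to the quenched one for fixed $\omega$, we invoke the Derriennic--Lin theorem \cite{DL-03}, which upgrades the annealed CLT to a quenched CLT for $\Q$-a.e.\ starting environment precisely under the spectral condition verified above. Since Theorem~\ref{thm:hk-bounds}(\ref{item:rho-bounds}) ensures that $\rho=d\Q/d\mb P$ is $\mb P$-a.s.\ strictly positive and finite, $\mb P$ and $\Q$ are mutually absolutely continuous and the quenched functional CLT then holds for $\mb P$-a.e.\ $\omega$. The main obstacle is precisely this passage from annealed to quenched: in our non-reversible balanced setting one cannot apply classical reversible-case results directly, and it is necessary to have Theorem~\ref{thm:var_decay} in its full quantitative strength so that the hypothesis of \cite{DL-03} can be checked.
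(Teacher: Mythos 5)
Your approach is essentially the one the paper has in mind: it offers no written proof of Corollary~\ref{cor:FCLT}, stating only that it follows from Theorem~\ref{thm:var_decay} combined with the CLT of \cite{DL-03}, and you correctly identify Theorem~\ref{thm:var_decay} as the quantitative input, Derriennic--Lin as the device that produces a quenched (started-at-a-point) CLT without reversibility, and the equivalence $\Q\approx\mb P$ (already in Theorem~\ref{thm:QCLT}(i), so Theorem~\ref{thm:hk-bounds} is overkill here) as the reason the result transfers from $\Q$-a.e.\ to $\mb P$-a.e.\ $\omega$.

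One technical imprecision worth flagging: the condition $\int_0^\infty\|P_t\zeta\|^2_{L^2(\Q)}\,dt<\infty$ that you check is the Kipnis--Varadhan $H^{-1}$ criterion, which suffices for the martingale approximation in the \emph{reversible} case but is not by itself the hypothesis of \cite{DL-03} in the non-reversible setting. In particular, the square-integrability of $t\mapsto\|P_t\zeta\|_{L^2(\Q)}$ alone does not control $\bigl\|\int_0^T P_t\zeta\,dt\bigr\|_{L^2(\Q)}$ well enough (Cauchy--Schwarz only gives $O(T^{1/2})$, which just misses the Derriennic--Lin summability $\sum_n n^{-3/2}\|\sum_{k<n}Q^k\zeta\|<\infty$). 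What does verify the Derriennic--Lin hypothesis is the \emph{pointwise} bound $\|P_t\zeta\|_{L^2(\Q)}\lesssim(1+t)^{-d/4}$ from Theorem~\ref{thm:var_decay}: for $d=3$ it gives $\|\int_0^T P_t\zeta\,dt\|\lesssim T^{1/4}$, for $d=4$ a logarithm, and for $d\ge5$ boundedness, all of which are summable against $T^{-3/2}$. Since you already have that pointwise rate in hand, the argument closes; you should just check the Derriennic--Lin condition directly from it rather than via the $L^2$-in-$t$ integral, and replace "which is the Kipnis--Varadhan spectral condition" by the actual hypothesis of \cite{DL-03}.
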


Another consequence of Theorem~\ref{thm:var_decay} is the existence of a stationary corrector in $d\ge 5$ for the non-divergence form homogenization problem \eqref{eq:elliptic-dirich}.
 
\begin{corollary}\label{cor:ex_sta_cor}
 Assume {\rm(A1), (A2)}. When $d\ge 5$, for any bounded local measurable function $\zeta:\Omega\to\R$, there exists $\phi:\Omega\to\R$ 
such that $\phi\in L^p(\mb P)$ for all $p\in(0,2)$, and for $\mb P$-almost all $\omega$, its 
 stationary extension $\bar\phi(x)=\phi(\theta_x\omega)$ solves
 \begin{equation}\label{eq:eq_stat_corr}
L_\omega \bar\phi(x)=\bar\zeta(x)-E_\Q[\zeta], \quad \text{ for all }x\in\Z^d.
 \end{equation}
 \end{corollary}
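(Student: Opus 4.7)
The plan is to define the corrector as the Laplace-type integral of the centered semigroup,
\[
\phi(\omega) := -\int_0^\infty \bigl(P_t\zeta(\omega) - E_\Q[\zeta]\bigr)\,dt,
\]
and to use Theorem~\ref{thm:var_decay} to verify both that $\phi\in L^p(\mb P)$ for every $p\in(0,2)$ and that $\phi$ solves \eqref{eq:eq_stat_corr}. The assumption $d\ge 5$ enters precisely to make the diffusive decay $t^{-d/4}$ integrable in $t$.

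First I would prove $\phi\in L^p(\mb P)$. From the $L^1$-bound in \eqref{eq:var_decay_p} and Jensen,
\[
\bigl|\mb E[P_t\zeta] - E_\Q[\zeta]\bigr| \le \norm{P_t\zeta - E_\Q[\zeta]}_{L^1(\mb P)} \le C(1+t)^{-d/4},
\]
and combining with the $L^p$-bound around the annealed mean in \eqref{eq:var_decay_p} gives $\norm{P_t\zeta - E_\Q[\zeta]}_{L^p(\mb P)} \le C_p(1+t)^{-d/4}$ for every $p\in[1,2)$. Since $d\ge 5$ implies $d/4>1$, Minkowski's integral inequality yields
\[
\norm{\phi}_{L^p(\mb P)} \le \int_0^\infty\norm{P_t\zeta - E_\Q[\zeta]}_{L^p(\mb P)}\,dt <\infty,\qquad p\in[1,2),
\]
and membership in $L^p(\mb P)$ for $p\in(0,1)$ follows because $\mb P$ is a probability measure; in particular the defining integral converges absolutely for $\mb P$-a.e.\,$\omega$.

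Next I would verify \eqref{eq:eq_stat_corr} by truncation. Introduce the generator of the environmental process on $L^\infty(\Omega)$,
\[
\mc L\eta(\omega) := \sum_{|y|=1}\omega(0,y)\bigl[\eta(\theta_y\omega) - \eta(\omega)\bigr] = L_\omega\bar\eta(0;\omega),
\]
where the second identity is the reading of \eqref{eq:def-of-L} in terms of the shift. One has $\partial_t P_t\zeta = \mc L P_t\zeta$ for bounded $\zeta$. Set $\phi_T := -\int_0^T(P_t\zeta - E_\Q[\zeta])\,dt$. Fubini (the integrand is uniformly bounded in $t$ and $\omega$) together with $\mc L E_\Q[\zeta]=0$ gives
\[
\mc L\phi_T = -\int_0^T\partial_t P_t\zeta\,dt = \zeta - P_T\zeta.
\]
As $T\to\infty$, $\phi_T\to\phi$ and $P_T\zeta\to E_\Q[\zeta]$ both in $L^1(\mb P)$ by the decay estimates just established, and $\mc L$ is a bounded operator on $L^1(\mb P)$ (only $2d$ shifted copies of $\eta$, each with weight in $(0,1/2)$). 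Passing to the limit yields $\mc L\phi = \zeta - E_\Q[\zeta]$ in $L^1(\mb P)$, hence $\mb P$-a.s. Applying the shift $\theta_x$ and intersecting countably many full-measure events over $x\in\Z^d$ promotes the identity at $x=0$ to $L_\omega\bar\phi(x) = \bar\zeta(x) - E_\Q[\zeta]$ simultaneously for every $x\in\Z^d$ on a set of full $\mb P$-measure, which is \eqref{eq:eq_stat_corr}.

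The only genuine constraint here is dimensional: for $\phi$ to be defined at all, one needs $\norm{P_t\zeta - E_\Q[\zeta]}_{L^1(\mb P)}$ to be integrable in $t$, which forces the diffusive exponent $d/4$ to exceed $1$. This is exactly the $d\ge 5$ hypothesis, and matches the integrability threshold for the resolvent at zero of the non-symmetric semigroup $P_t$. The remaining pieces — upgrading the $L^1$-decay to $L^p$ with $p<2$, pushing $\mc L$ inside the $t$-integral, and lifting the a.s.\,identity at $x=0$ to all of $\Z^d$ — are routine consequences of Theorem~\ref{thm:var_decay} together with the fact that the finite-range operator $\mc L$ is bounded on $L^1(\mb P)$.
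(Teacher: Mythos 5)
Your proof is correct and takes essentially the same route as the paper: both define $\phi$ as the time integral of the centered semigroup, use Theorem~\ref{thm:var_decay} together with $d\ge 5$ to obtain $L^p(\mb P)$-convergence of the truncations $\phi_T$, and pass to the limit in $\mc L\phi_T = \zeta - P_T\zeta$. You fill in a few details the paper leaves implicit (combining the two halves of \eqref{eq:var_decay_p} to get full $L^p$-decay around $E_\Q\zeta$, boundedness of $\mc L$ on $L^1$, and the countable intersection over shifts), but the argument is the same one.
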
 
 
  \begin{remark}
 The corrector $\bar \phi(x)$ plays a crucial role in the quantification of the homogenization error for non-divergence form operators.  
 We remark that our Corollary~\ref{cor:ex_sta_cor} is a weaker version of \cite[Theorem~7.1]{AL-17} where not only the existence of the stationary corrector was proved but a stretched exponential tail was also obtained.  
 Specifically, \cite{AL-17} derived such a result using  a large scale $C^{1,1}$ estimate for the homogenization problem. Although our Corollary~\ref{cor:ex_sta_cor} is an immediate consequence of the optimal diffusive decay rate,  it has much weaker stochastic integrability.
 
 In the classical periodic environment setting,  it is well-known that the existence of a stationary corrector implies that the optimal homogenization error of problem \eqref{eq:elliptic-dirich} is generically of scale $R^{-1}$.  Readers may refer to the classical books \cite{BLP,JKO} for the derivation of the rate in the periodic setting, and \cite{GTY-19, GST-22} for discussions on the optimality of the rates.  
  \end{remark}
 
To obtain the above results, we need to study properties of the Green functions.
For $d\ge 2, R\ge 1$, 
denote  the exit time from $B_R$ of the RWRE by  
\begin{equation}\label{def:tau}
\tau=\tau_R=\inf\{n\ge 0:X_n\notin B_R\}.
\end{equation}

\begin{definition}
\label{def:green}
For $R\ge 1$, $\omega\in\Omega$, $x\in \Z^d$, $S\subset\Z^d$,  the {\it Green function} $G_R(\cdot,\cdot)$ in the ball $B_R$ for the balanced random walk is defined by
\[
G_R(x, S)=G_R^\omega(x,S):=E_\omega^x\left[\sum_{n=0}^{\tau_R-1}\mathbbm{1}_{X_n\in S}\right], \quad x\in\bar B_R.
\]
We also write $G_R(x,y):=G_R^\omega(x,\{y\})$ and $G_R(x):=G(x,0)$.
\end{definition}

Note that for $d\ge 3$, by \cite[Theorem~1]{GZ-12}, the RWRE is transient, and so the Green function in the whole space
\[
G^\omega(x):=\lim_{R\to\infty}G_R(x)<\infty
\]
is well-defined for all $x\in\Z^d$, $\mb P$-almost surely.
Whereas, when $d=2$, the RWRE is recurrent, and thus the Green function in the whole $\Z^2$ is infinity. In this case, the {\it potential kernel} 
\begin{equation}
\label{eq:def-potential}
A(x)=A^\omega(x)=\sum_{n=0}^\infty [p^\omega_n(0,0)-p^\omega_n(x,0)],
\quad x\in\Z^2,
\end{equation}
is well-defined.  Note that $G$ and $A$ are both non-negative functions, and, for $x\in \Z^d$,
\[
L_\omega G(x)=-\mathbbm{1}_{x=0}, \quad \text{ if } d\ge 3, 
\]
and
\[
L_\omega A(x)=\mathbbm{1}_{x=0},\quad \text{ if } d=2.\]

\begin{theorem}\label{thm:Green-bound}
Assume {\rm(A1), (A2)}. 
For $r> 0$, let
\begin{equation}\label{eq:def_ur}
U(r):=\left\{
\begin{array}{lr}
-\log r & \quad d=2,\\
r^{2-d} &\quad d\ge 3.
\end{array}
\right.
\end{equation}
For any $\error>0$, there exists a random variable $\tx=\tx(\omega,d,\kappa,\error)>0$ with 
$\mb E[\exp(c\tx^{d-\error})]<\infty$ such that, 
for $\mb P$-almost surely, for all $x\in B_R$,
\[
\tx^{-s}[U(|x|+1)-U(R+2)]
\lesssim
G_R^\omega(x)\lesssim 
\tx^{d-1}[U(|x|+1)-U(R+2)],
\]
where $s=s(d,\kappa)=2+\tfrac{1}{2\kappa}-d\ge 2$.
\end{theorem}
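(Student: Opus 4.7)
The function $v_R(x):=U(|x|+1)-U(R+2)$ is, up to multiplicative constants depending only on $d$ and $\bar a$, the Green function in $\mb B_{R+2}$ for the continuous homogenized operator $\tfrac12\tr(\bar a D^2\,\cdot)$. The plan is to compare $G_R^\omega$ with $v_R$, extracting the factors $\tx^{d-1}$ and $\tx^{-s}$ directly from the heat kernel bounds of Theorem~\ref{thm:hk-bounds}\eqref{item:hke-expmmt} and using the quantitative homogenization of Proposition~\ref{thm:quant-homo} on scales where the heat-kernel bounds alone are too crude.

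\textbf{Upper bound.} For $d\ge 3$, integrating the upper heat-kernel bound in time yields
\[
G_R^\omega(x)\le G^\omega(x)=\int_0^\infty p_t^\omega(x,0)\,\dd t \lesssim \tx^{d-1}\int_0^\infty (1+t)^{-d/2}e^{-c\mf h(|x|,t)}\,\dd t \lesssim \tx^{d-1}\,U(|x|+1),
\]
which is of the desired form $\tx^{d-1}v_R(x)$ in the inner region $|x|\le R/2$, where $U(|x|+1)\asymp U(|x|+1)-U(R+2)$. For $|x|$ close to $R$ one must capture the extra decay of $G_R^\omega$ near $\partial B_R$: $G_R^\omega$ is $L_\omega$-harmonic on $B_R\setminus\{0\}$, vanishes on $\partial B_R$, and is bounded by $C\tx^{d-1}v_R$ on $\partial B_{R/2}$ by the previous display, so Proposition~\ref{thm:quant-homo} applied to this Dirichlet problem on a chain of dyadic sub-balls covering $B_R\setminus B_{R/2}$ compares $G_R^\omega$ with the homogenized harmonic extension, which is itself $\lesssim v_R$ on the annulus. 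For $d=2$ the walk is recurrent; one then works with the potential kernel $A^\omega$ of \eqref{eq:def-potential}, derives $A^\omega(x)\lesssim \tx^{d-1}\log(|x|+1)$ by integrating $p_t^\omega(0,0)-p_t^\omega(x,0)$ over $t$, and uses the identity $G_R^\omega(x)=E_\omega^x[A^\omega(X_{\tau_R})]-A^\omega(x)$ together with Proposition~\ref{thm:quant-homo} applied to the harmonic extension on the right-hand side.

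\textbf{Lower bound and main obstacle.} For the lower bound, truncate the time integral at $N\asymp R^2$: the heat-kernel lower bound of Theorem~\ref{thm:hk-bounds}\eqref{item:hke-expmmt} combined with a standard exit-time estimate (controlling $P_\omega^x(\tau_R\le N)$ via the strong Markov property and the same heat-kernel bounds) gives, for $|x|\le R/2$,
\begin{align*}
G_R^\omega(x,0) &\ge \sum_{n=0}^{N} p_n^\omega(x,0) - (\text{small correction}) \\
&\gtrsim \tx^{-s}\sum_{c(|x|^2\vee 1)\le n\le cR^2}(1+n)^{-d/2} \\
&\asymp \tx^{-s}\bigl[U(|x|+1)-U(R+2)\bigr],
\end{align*}
which is the claimed bound in the inner region (and works for $d=2$ and $d\ge 3$ by inspection of the sum). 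The matching bound on the outer annulus follows from running Proposition~\ref{thm:quant-homo} in reverse: the homogenized Green function is $\asymp v_R$ there, and the quantitative comparison forces $G_R^\omega\gtrsim \tx^{-s}v_R$. The main obstacle is to coordinate the random scale on which homogenization becomes effective with the multiscale geometry of the singularity at $0$: one must invoke Proposition~\ref{thm:quant-homo} on balls of every dyadic radius and arbitrary centers inside $B_R$ simultaneously, and it is precisely the uniformity of $\ms X_q$ across centers $y\in B_{3R}$---the feature singled out in the Remark following Proposition~\ref{thm:quant-homo}---that allows a single random variable $\tx$ with stretched-exponential moments to appear in the final bound.
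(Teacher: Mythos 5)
Your main tool is Theorem~\ref{thm:hk-bounds}\eqref{item:hke-expmmt} (the heat-kernel bounds with the exponential-moment random variable $\tx$). But in the paper, Theorem~\ref{thm:hk-bounds} is a \emph{consequence} of Theorem~\ref{thm:Green-bound}: the proof of Theorem~\ref{thm:hk-bounds} in Section~3.1 derives the key estimate \eqref{eq:220922} by invoking Corollary~\ref{cor:green-time} (for $d=2$), Corollary~\ref{cor:a_and_g} (for $d\ge 3$), and Theorem~\ref{thm:Green-bound} directly in the lower-bound argument — all of which rest on Theorem~\ref{thm:Green-bound}. The heat-kernel estimates available \emph{before} Theorem~\ref{thm:Green-bound} are those of Theorem~\ref{thm:recall-rho-estimates}\eqref{item:recall-hke}, which carry the prefactor $\rho_\omega(0)/\rho_\omega(B_{\sqrt t})$ and only $L^{(d+1)/d}$ and $L^{-p}$ moments, not the deterministic $(1+t)^{-d/2}$ decay times an exponentially integrable $\tx$ that your integration-in-time argument requires. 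Replacing $p_t^\omega(x,0)$ by $\rho_\omega(0)\rho_\omega(B_{\sqrt t})^{-1}e^{-c|x|^2/t}$ and integrating does not yield a clean power of a single exponentially integrable $\tx$, precisely because controlling $\rho_\omega(0)t^{d/2}/\rho_\omega(B_{\sqrt t})$ with exponential moments is Theorem~\ref{thm:hk-bounds}\eqref{item:rho-hke-expmmt} itself.

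The paper's actual route is entirely different and does not touch the heat kernel: it constructs deterministic radial test functions $h=h_1\cup h_2\cup h_3$ (for the upper bound) and $\ell=\ell_1\cup\ell_2\cup\ell_3$ (for the lower bound) patched across three regions — an inner ball $B_{R_0}$ where the function is made $\omega$-superharmonic (resp.\ $\omega$-subharmonic) by construction, a boundary annulus $B_R\setminus B_{R/2}$ where an explicit exponential profile is super-/subharmonic by Lemma~\ref{lem:test-exponential}, and a middle annulus where the function is a strictly super-/subharmonic perturbation of $U$. The proof is by contradiction via the maximum principle: if $\max(G_R-h)>0$ the maximizer $x_0$ is forced into the middle annulus, and there Proposition~\ref{thm:quant-homo} is applied \emph{once}, on the single ball $B_{|x_0|/2}(x_0)$, to compare the $\omega$-harmonic replacement with the homogenized harmonic solution; the extra $|x_0|^{-\delta}$-slack built into $\tfa,\tfb$ then absorbs the algebraic homogenization error. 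Your observation about the uniformity of $\ms X_q$ over centers $y\in B_{3R}$ is correct and is indeed why a single homogenization radius $R_0$ can be used for every possible location of $x_0$ — but the logical scaffolding you propose (Green bounds derived from heat-kernel bounds) runs backwards relative to what the paper can actually prove at this stage. Your secondary sub-arguments (the chain of dyadic balls near $\partial B_R$, the exit-time correction in the truncated time integral) are also not present in the paper and would require additional work, but they are moot given the circularity.
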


We remark that an upper bound for the Green function of the approximate corrector (which is defined in the whole $\R^d$) was proved by Armstrong, Lin \cite[Proposition~4.1]{AL-17}.

Our proof of the bounds of $G_R^\omega$ follows the idea of Armstrong, Lin \cite[Proposition~4.1]{AL-17}.  
In Theorem~\ref{thm:Green-bound}, we apply their idea to obtain both upper and lower bounds for the Green function $G_R$ in  a finite region.

\begin{corollary}\label{cor:a_and_g}
Assume {\rm(A1), (A2)}.  Let $s$ be as in Theorem~\ref{thm:Green-bound}.  For any $\error>0$, there exists a random variable $\tx=\tx(\omega,d,\kappa,\error)>0$ with 
$\mb E[\exp(c\tx^{d-\error})]<\infty$ such that, 
$\mb P$-almost surely,  for all $x\in\Z^d$, 
\begin{align*}
\tx^{-s}\log(|x|+1)\lesssim
A^\omega(x)
\lesssim  \tx\log(|x|+1), &\text{ when }d=2;\\
\tx^{-s}(1+|x|)^{2-d}\lesssim
G^\omega(x)
\lesssim
\tx^{d-1}(1+|x|)^{2-d}, &\text{ when }d\ge 3.
\end{align*}
\end{corollary}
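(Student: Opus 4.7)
The plan is to obtain Corollary~\ref{cor:a_and_g} as a consequence of Theorem~\ref{thm:Green-bound} by passing to the infinite-volume limit $R\to\infty$. The transient case $d\ge 3$ and the recurrent case $d=2$ behave quite differently and require separate arguments.

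For $d\ge 3$, I would argue directly. The walk is transient by \cite{GZ-12}, so $G^\omega(x)=\lim_{R\to\infty}G_R^\omega(x)$ exists as a monotone limit, and the random variable $\tx$ in Theorem~\ref{thm:Green-bound} is chosen independently of $R$. Since $U(R+2)=(R+2)^{2-d}\to 0$, we have $U(|x|+1)-U(R+2)\to(1+|x|)^{2-d}$, and Theorem~\ref{thm:Green-bound} passes to the limit verbatim, yielding
\[
c\,\tx^{-s}(1+|x|)^{2-d}\le G^\omega(x)\le C\,\tx^{d-1}(1+|x|)^{2-d},
\]
together with the inherited exponential integrability $\mb E[\exp(c\tx^{d-\error})]<\infty$.

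For $d=2$, $U(R+2)=-\log(R+2)\to-\infty$ precludes such a direct passage. I would use instead the fact that $A^\omega$ solves $L_\omega A^\omega=\mathbbm{1}_{\{0\}}$ with normalization $A^\omega(0)=0$: applying optional stopping to the martingale $A^\omega(X_n)-\#\{k<n:X_k=0\}$ at $\tau_R$ gives, for every $R\ge |x|+2$,
\[
A^\omega(x)=\bigl(G_R^\omega(0,0)-G_R^\omega(x,0)\bigr)+\bigl(h_R(x)-h_R(0)\bigr),\qquad h_R(y):=E_\omega^y[A^\omega(X_{\tau_R})].
\]
Since $h_R$ is $L_\omega$-harmonic in $B_R$, the Krylov--Safonov H\"older estimate for non-divergence difference operators yields $|h_R(x)-h_R(0)|\lesssim(|x|/R)^\alpha\,\osc_{\partial B_R}A^\omega$, with a deterministic exponent $\alpha=\alpha(d,\kappa)\in(0,1)$.

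The main obstacle is that the bounds of Theorem~\ref{thm:Green-bound} applied separately to $G_R^\omega(0,0)$ and $G_R^\omega(x,0)$ share matching $\log R$ terms only up to $\tx$-dependent factors, so naive subtraction fails to give an $R$-independent estimate. I would close the argument via the harmonic-measure representation
\[
G_R^\omega(0,0)-G_R^\omega(x,0)=P_\omega^x(\tau_R<\sigma_0)\,G_R^\omega(0,0),\qquad \sigma_0:=\inf\{n\ge 0:X_n=0\},
\]
combined with Theorem~\ref{thm:Green-bound} applied at nested dyadic scales $R_k=2^k$ and the Harnack inequality for $L_\omega$. This first produces a coarse a priori bound of the form $|A^\omega(y)|\le C\tx\log^2(|y|+1)$, which then feeds back into the oscillation estimate to force $h_R(x)-h_R(0)\to 0$ as $R\to\infty$. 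Letting $R\to\infty$ in the identity and re-examining the finite-volume difference at an appropriately chosen scale $R\asymp|x|^M$ removes the extra logarithm and yields the sharp two-sided bound $\tx^{-s}\log(|x|+1)\lesssim A^\omega(x)\lesssim \tx\log(|x|+1)$, with the required exponential integrability inherited directly from $\tx$.
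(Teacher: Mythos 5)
Your argument for $d\ge 3$ is correct: $\tx$ in Theorem~\ref{thm:Green-bound} is $R$-independent, $G_R^\omega(x)$ increases to $G^\omega(x)$, and $U(R+2)=(R+2)^{2-d}\to 0$, so the bound passes to the limit directly.

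For $d=2$, the martingale identity and the Krylov--Safonov oscillation estimate for $h_R$ are correct, but the rest of the outline has two genuine gaps. First, the ``coarse a priori bound'' $|A^\omega(y)|\lesssim \tx\log^2(|y|+1)$ is asserted rather than derived; the dyadic-scale iteration you hint at produces a recursion of the form $\osc_{\bar B_{\theta R}}A\lesssim\tx\log R+\tfrac12\osc_{\bar B_R}A$, which only closes if one already knows some sub-polynomial growth of $A$, and you do not supply that input. Second, and more fatally, the ``re-examine at scale $R\asymp|x|^M$'' step does not give the lower bound. There $G_R(0,0)-G_R(x,0)=P_\omega^x(\tau_R<\sigma_0)G_R(0,0)$, and the escape probability must be bounded below; but Theorem~\ref{thm:Green-bound} only yields $P_\omega^x(\sigma_0<\tau_R)=G_R(x,0)/G_R(0,0)\le C\,\tx^{s+1}\bigl(1-\tfrac{1}{M}\bigr)$, which is not $<1$ for large $\tx$, so one cannot conclude $P_\omega^x(\tau_R<\sigma_0)\gtrsim 1/M$. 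Thus the lower bound at this scale does not follow.

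The ingredient you are missing is the nonnegativity of $A^\omega$, which you never invoke (you even write $|A^\omega(y)|$). One has $A^\omega\ge 0$ because $E_\omega^x\bigl[\#\{n\le N:X_n=0\}\bigr]\le E_\omega^0\bigl[\#\{n\le N:X_n=0\}\bigr]$ by the strong Markov property at the first hitting of $0$. Consequently $A^\omega$ is a nonnegative $L_\omega$-harmonic function on $\Z^2\setminus\{0\}$, and the elliptic Harnack inequality (from Theorem~\ref{thm:harnack-para}, or \cite{BCDG-18}), chained around the annulus $\partial B_R$ with a \emph{deterministic} constant, gives $\max_{\partial B_R}A^\omega\le C\min_{\partial B_R}A^\omega$. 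Your own martingale identity evaluated at $x=0$ reads $E_\omega^0[A^\omega(X_{\tau_R})]=G_R^\omega(0,0)$, hence $\min_{\partial B_R}A^\omega\le G_R^\omega(0,0)\le\max_{\partial B_R}A^\omega$, so $A^\omega(z)\asymp G_R^\omega(0,0)$ whenever $|z|\asymp R$. Inserting the two-sided bounds of Theorem~\ref{thm:Green-bound} on $G_R^\omega(0,0)$ immediately yields $\tx^{-s}\log(|z|+1)\lesssim A^\omega(z)\lesssim\tx\log(|z|+1)$, with the same $\tx$ and hence the same exponential integrability. This route is shorter, avoids the uncontrolled coarse bound, and gives upper and lower bounds in one stroke.
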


%

\section{Bounds of the Green function in a ball}

\newcounter{stepctr}[theorem]
\newcommand{\steps}{\stepcounter{stepctr}{\bf Step \thestepctr.}}

By the Markov property, $G_R(x,S)$ satisfies $G_R=0$ on $\partial B_R$ and
\begin{equation}\label{eq:LG_R}
L_\omega G_R(x,S)=-\mathbbm{1}_{x\in S},\quad x\in B_R. 
\end{equation}

Our proof of the bounds of the Green function $G_R$ (Theorem~\ref{thm:Green-bound}) follows the idea of Armstrong-Lin \cite[Proposition~4.1]{AL-17}.  
The idea,  which we learn from \cite{AL-17},  is through the comparison of $G_R$ and test functions, and it is explained as follows.  

Let us call a function $u:\Z^d\to\R$ {\it $\omega$-harmonic} on $A\subset\Z^d$ if $L_\omega u(y)=0$ for $y\in A$.  Clearly, the Green function $G_R(x,0)$ is $\omega$-harmonic on $B_R\setminus\{0\}$.

To obtain the upper bound,  we construct a function $h$ which is almost $\omega$-harmonic away from the origin and with (almost) zero boundary values,  so that $G_R-h$ is sub-harmonic at places that are either close to the origin or the boundary of $B_R$.  
As a result,  if $(G_R-h)(x_0)=\max_{B_R}(G_R-h)$ were positive,  then the maximum principle forces the maximizer $x_0$ to be sufficiently far away from both the origin and the boundary.  This allows enough space for the homogenization to occur around $x_0$, i.e.,  $h$ is close to its continuous harmonic counterpart up to an algebraic error.  On the other hand,  
by the maximal principle, the $\omega$-harmonic counterpart of $G_R-h$ (which is an algebraic error away from $G_R-h$) cannot achieve its maximum over the ball $\bar B_{|x_0|/2}(x_0)$ in the center. This  would contradict the assumption that the maximizer is $x_0$,  
if we can exploit the fact that $h$ is strictly $\omega$-superharmonic to give it enough room to absorb the algebraic homogenization error.

The proof of the lower bound follows similar philosophy.

Note that \cite[Proposition~4.1]{AL-17} only deals with the Green function of the ``approximate corrector" which is defined on the whole $\R^d$,  while our Green function corresponds to the original non-divergence form operator  within a finite ball $B_R$.  Hence, in our case,  the challenge also lies in the construction of test functions so that they have the desired 
boundary values and concavity near the discrete boundary.

The following Lemmas contain properties of some deterministic functions that will be useful in our construction of the test functions in the next subsections.

\begin{lemma}\label{lem:tfab}
Let $\delta=\beta/2$, where $\beta=\beta(d,\kappa,q_\error)$ is as defined in Proposition~\ref{thm:quant-homo}. Define  $\tfar, \tfbr:(0,\infty)\to\R$ as
\[
\tfar(r)=
\left\{
\begin{array}{lr}
-(\log r)\exp(r^{-\delta}/\delta) & d=2\\
r^{2-d}\exp(-r^{-\delta}/\delta) & d\ge 3,
\end{array}
\right.
\]
\[
\tfbr(r)=\left\{
\begin{array}{lr}
-(\log r)\exp(-r^{-\delta}/\delta) & d=2\\
r^{2-d}\exp(r^{-\delta}/\delta) & d\ge 3.
\end{array}
\right.
\] 
Define two functions $\tfa, \tfb:\R^d\setminus\{0\}\to\R$ as 
\[
\tfa(y)=\tfar(|y|), 
\quad \text{ and }\quad
\tfb(y)=\tfbr(|y|),
\quad y\neq 0.
\]
Then, the following statements hold.
\begin{enumerate}[(i)]
\item\label{item:tfab_der} $\tfar, \tfbr$ are decreasing functions on $(C,\infty)$. Moreover, for $r\ge C$, 
\[
-\tfar'(r)\asymp 
r^{1-d},
\quad \text{ and }\quad
0.5r^{1-d}\le -\tfbr'(r)\le (d-0.5)r^{1-d}.
\]
\item\label{item:tfab_laplace} For $|y|\ge C$, we have 
\[
-\Delta\tfa(y)\ge |y|^{-(2+\delta)}|\zeta(y)|,
\quad \text{ and }\quad
-\Delta\tfb(y)\le -|y|^{-(2+\delta)}\abs{\tfb(y)}.
\]
\item\label{item:tfab_ck} For $|y|\ge 2$ and $k\in\N$, there exists $C=C(k,d)$ such that
\[
|D^k\tfa(y)|\le C|y|^{-k}|\tfa(y)|,
\quad \text{ and }\quad
|D^k\tfb(y)|\le 
C|y|^{-k}\abs{\tfb(y)}.
\]
\end{enumerate}
\end{lemma}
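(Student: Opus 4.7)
All three parts are elementary ODE exercises, since $\tfa(y) = \tfar(|y|)$ and $\tfb(y) = \tfbr(|y|)$ are radial, for which $\Delta f(|y|) = f''(r) + \frac{d-1}{r} f'(r)$ and higher derivatives decompose into sums of products of bounded angular factors $y_i/|y|$ and radial derivatives $\tfar^{(j)}(r)$. The unifying structural observation is to write both radial profiles as a product
\[
\tfar(r) = U(r)\, h_\epsilon(r), \qquad \tfbr(r) = U(r)\, h_{-\epsilon}(r),
\]
where $U(r)$ is the radial fundamental solution of the Laplacian ($U(r) = r^{2-d}$ for $d \ge 3$, $U(r) = -\log r$ for $d = 2$), so $U'' + \frac{d-1}{r}U' = 0$ for $r > 0$, and $h_\epsilon(r) := \exp(\epsilon r^{-\delta}/\delta)$ is a mild exponential perturbation with $h_\epsilon'(r) = -\epsilon\, r^{-\delta - 1} h_\epsilon(r)$. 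The sign $\epsilon = \pm 1$ is chosen in each dimension so that $\tfa$ becomes strictly superharmonic and $\tfb$ strictly subharmonic for $r$ past a threshold.

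For part (ii), the main computation, I would expand
\[
\Delta(Uh_\epsilon) = h_\epsilon\,(\Delta U) + h_\epsilon'\,\bigl(2U'(r) + \tfrac{d-1}{r}U(r)\bigr) + U\, h_\epsilon'',
\]
whose first term vanishes by harmonicity of $U$. A direct substitution of the explicit formulas $h_\epsilon''(r) = h_\epsilon(r)\bigl[\epsilon(\delta+1)r^{-\delta-2} + r^{-2\delta-2}\bigr]$ together with $U'(r) = (2-d)r^{1-d}$ (or $U'(r) = -r^{-1}$ when $d = 2$) shows that both surviving terms share a common sign for $r$ large---determined by $\epsilon$---and have leading order $r^{-(d+\delta)} h_\epsilon(r)$. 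Since $|\tfar(r)| = |U(r)|\, h_\epsilon(r) \asymp r^{2-d} h_\epsilon(r)$ up to a logarithmic factor when $d = 2$, the leading order rewrites as a constant multiple of $|y|^{-(2+\delta)} |\tfa(y)|$ (respectively $|\tfb(y)|$) with the correct sign, which yields (ii) once $|y| \ge C(d,\delta)$, absorbing $O(1)$ coefficients into the threshold.

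Parts (i) and (iii) follow by routine differentiation. For (i), $\tfar'(r) = U'(r) h_\epsilon(r) + U(r) h_\epsilon'(r)$; the second summand contributes a relative factor $|h_\epsilon'/h_\epsilon| = r^{-\delta-1}$, which is smaller than $|U'/U| \asymp r^{-1}$, so for $r$ past a threshold the first summand dominates and one reads off $-\tfar'(r) \asymp r^{1-d}$. The sharper two-sided bounds $0.5\, r^{1-d} \le -\tfbr'(r) \le (d - 0.5)\, r^{1-d}$ follow by explicit substitution of $U'$ and choosing the threshold large enough that the perturbation from $h_\epsilon$ is controlled by $0.5$ in the relevant normalization. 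For (iii), I would proceed by induction on $k$: each derivative acting on $\tfar(|y|)$ either introduces a bounded angular factor or an extra $r^{-1}$ from differentiating the radial profile (with the $r^{-\delta-1}$ correction from $h_\epsilon$ subsumed into a constant), so the inductive bound $|\tfar^{(j)}(r)| \le C_j\, r^{-j}|\tfar(r)|$ propagates and yields $|D^k \tfa(y)| \le C_k |y|^{-k}|\tfa(y)|$; the same argument with $\epsilon$ flipped handles $\tfb$.

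The most delicate point is the sign bookkeeping in part (ii) when $d = 2$, where both $|{-}\log r|$ and $h_\epsilon(r)$ grow without bound, so one must verify that the coefficient of $\log r$ in the expansion of $\Delta(U h_\epsilon)$ has the intended sign and is not cancelled by the constant or subleading $r^{-\delta}\log r$ contributions. The case $d \ge 3$ is cleaner because the harmonic profile $r^{2-d}$ decays, so both surviving terms after the $\Delta U$ cancellation have easily identified leading behaviour.
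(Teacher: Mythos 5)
The paper does not provide a proof of Lemma~\ref{lem:tfab} (it is left to the reader), so there is nothing to compare your route against; the product decomposition $\tfar = U h_\epsilon$ with $U$ the radial harmonic profile is a clean and natural way to organize the computation, and parts (i) and (iii) go through exactly as you sketch. However, part (ii) has a genuine quantitative gap in the $d=2$ case which your proposal papers over.

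Carrying out your own expansion for $d=2$ (where $\epsilon = +1$ for $\tfa$), one finds
\[
-\Delta\tfa(y) = e^{r^{-\delta}/\delta}\Bigl[\frac{\delta\log r - 2}{r^{\delta+2}} + \frac{\log r}{r^{2\delta+2}}\Bigr], \qquad r = |y|,
\]
while $|\tfa(y)| = (\log r)\,e^{r^{-\delta}/\delta}$, so the ratio $-\Delta\tfa / \bigl(|y|^{-(2+\delta)}|\tfa(y)|\bigr)$ equals $\delta - \tfrac{2}{\log r} + r^{-\delta}$, which tends to $\delta$ as $r\to\infty$. Since $\delta = \beta/2 < 1/2$, this ratio is eventually strictly less than $1$, so the inequality $-\Delta\tfa \ge |y|^{-(2+\delta)}|\tfa|$ with no constant \emph{fails} for large $|y|$ when $d=2$; the same phenomenon occurs for $\tfb$. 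Your phrase ``absorbing $O(1)$ coefficients into the threshold'' does not work here: a multiplicative coefficient strictly below $1$ cannot be removed by enlarging $C$. For $d\ge 3$ the leading coefficient is $d-2+\delta \ge 1+\delta > 1$, so the statement is literally true, but you should treat $d=2$ separately and explicitly insert a constant $c(d,\kappa,\delta)>0$ on the right-hand side (which is consistent with how the lemma is used in the paper: in the construction of $\tilde\zeta$ one is free to pick the small constant $c$). Make this explicit rather than asserting (ii) as stated.

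A second, smaller slip: in your ``delicate point'' discussion you write that $h_\epsilon(r)$ grows without bound. It does not -- $h_\epsilon(r) = \exp(\epsilon r^{-\delta}/\delta)$ tends to $1$ as $r\to\infty$ and is bounded above and below by constants near $1$ on any range $r\ge C$; it only blows up (or vanishes) as $r\to 0^+$. The only genuinely unbounded factor in $d=2$ is $|\log r|$, which is harmless since it cancels in the ratio above. This does not affect the computation, but the framing should be corrected.
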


\begin{lemma}\label{lem:test-exponential}
There exist constants $\alpha_0\in(0,1)$ and $A_0\ge 1$ depending on $\kappa$ such that, for any $\alpha\in(0,\alpha_0)$, $A\ge A_0$, 
\begin{align}
&L_\omega(e^{-2\alpha|x|/R})\le 0 \quad \text{ in $B_R\setminus B_{R/2}$, when }R\ge A_0;\label{eq:20200203-1}\\
&L_\omega(e^{-A|x|^2})\ge -\mathbbm{1}_{x=0}, \quad x\in\Z^d;\label{eq:L-e^x2}\\
&L_\omega(e^{-A|x|^2/R^2})>0, \quad x\in B_R\setminus B_{R/2}, \text{ when }R\ge A^2.\label{eq:L-e^x2/R2}
\end{align}
\end{lemma}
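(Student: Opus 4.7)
My plan is to prove each of the three statements by exploiting the decomposition
\[
L_\omega u(x) = \sum_{i=1}^d a_i(x)\,\nabla_i^2 u(x), \qquad a_i(x) := \frac{\omega_i(x)}{2\tr\omega(x)},
\]
where ellipticity gives $a_i(x) \in [\kappa,\tfrac12 - \kappa(d-1)]$ and $\sum_i a_i(x) = \tfrac12$. Statement \eqref{eq:L-e^x2} I would handle by an exact discrete calculation, since $e^{-A|x|^2}$ factors coordinatewise; statements \eqref{eq:20200203-1} and \eqref{eq:L-e^x2/R2} I would treat by Taylor expansion on the annulus $B_R \setminus B_{R/2}$, writing $\nabla_i^2 u(x) = \partial_{ii} u(x) + E_i(x)$ with $|E_i(x)| \lesssim \sup_{|y-x|\le 1}|\partial_i^4 u(y)|$ and verifying that the continuous leading term $\sum_i a_i \partial_{ii} u$ has the desired sign with a margin that dominates the remainder.

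For \eqref{eq:L-e^x2}, the product form $e^{-A|x|^2} = \prod_i e^{-A x_i^2}$ gives the exact identity
\[
\nabla_i^2 u(x) = u(x)\bigl[2e^{-A}\cosh(2A x_i) - 2\bigr].
\]
At $x = 0$ this evaluates to $L_\omega u(0) = e^{-A} - 1 \ge -1$. For $x \neq 0$, pick $j$ with $|x_j| \ge 1$, so $\nabla_j^2 u(x) \ge u(x)(e^A - 2)$ is exponentially large, while each coordinate with $x_i = 0$ contributes at worst $-2u(x)$. Using $a_j \ge \kappa$ on the positive contribution and $a_i \le \tfrac12$ on the negative ones then gives
\[
L_\omega u(x) \ge u(x)\bigl[\kappa(e^A - 2) - (d-1)\bigr] \ge 0
\]
once $A \ge A_0(\kappa,d)$.

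For \eqref{eq:20200203-1}, writing $u = \phi(|x|)$ with $\phi(r) = e^{-2\alpha r/R}$, the standard formula for $\partial_{ii}(\phi(|x|))$ combined with $\sum_i a_i = \tfrac12$ yields
\[
\sum_i a_i \partial_{ii} u = \frac{2\alpha e^{-2\alpha|x|/R}}{R|x|}\Bigl[\tfrac{2\alpha|x|}{R}\,T - \bigl(\tfrac12 - T\bigr)\Bigr], \qquad T := \sum_i a_i (x_i/|x|)^2.
\]
The pointwise bound $a_i \le \tfrac12 - \kappa(d-1)$ forces $T \le \tfrac12 - \kappa(d-1)$, hence $\tfrac12 - T \ge \kappa(d-1)$; choosing $\alpha_0 := \tfrac12\kappa(d-1)$, the bracket is at most $\alpha - \kappa(d-1) \le -\tfrac12\kappa(d-1)$. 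The leading term is thus negative of size $\asymp R^{-2}$, while $|D^4 u| \lesssim R^{-4}$ in a neighborhood of the annulus, so choosing $R \ge A_0$ large enough absorbs the Taylor remainder. For \eqref{eq:L-e^x2/R2}, direct calculation gives $\partial_{ii} u = (-2A/R^2 + 4A^2 x_i^2/R^4)\,u$; using $a_i \ge \kappa$ and $|x| \ge R/2$ one gets $\sum_i a_i x_i^2 \ge \kappa R^2/4$, so
\[
\sum_i a_i \partial_{ii} u \ge \frac{A(A\kappa - 1)}{R^2}\,u \ge \frac{\kappa A^2}{2R^2}\,u \quad \text{for } A \ge 2/\kappa.
\]
A direct computation on $u = \prod_i e^{-A x_i^2/R^2}$ gives $|\partial_i^4 u(y)| \lesssim A^4 u(y)/R^4$ for $|y| \le R+1$, and the ratio $u(y)/u(x)$ across unit steps is bounded by $e^{2A/R} \le e^2$ whenever $R \ge A^2$; the Taylor error is therefore $\lesssim A^4 u(x)/R^4$, and is dominated by the leading term once $\kappa R^2 \ge C A^2$, which holds in particular when $R \ge A^2$ and $A \ge A_0$.

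The main obstacle I expect is the bookkeeping in \eqref{eq:L-e^x2/R2}, where both the continuous leading term ($\asymp A^2/R^2$) and the discretization remainder ($\asymp A^4/R^4$) grow with $A$; the assumption $R \ge A^2$ has to be used twice — to absorb the remainder, and to keep $u(y)/u(x)$ bounded across unit displacements. A closely related subtlety, shared with \eqref{eq:20200203-1}, is verifying the sharp ellipticity constraint $\max_j a_j \le \tfrac12 - \kappa(d-1)$, which is what prevents the first-order radial drift from overwhelming the diffusive second-order term in the annular region.
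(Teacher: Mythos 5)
Your proof is correct and structurally the same as the paper's: both argue through the decomposition $L_\omega u = \sum_i a_i \nabla_i^2 u$ with $a_i = \omega(x,x\pm e_i)$, use the exact product-form identity to compute $\nabla_i^2 e^{-A|x|^2}$ for \eqref{eq:L-e^x2}, and for the two annular estimates compare the discrete second difference to the continuous Hessian and absorb the discretization error. The ellipticity bookkeeping you isolate --- $\sum_i a_i = \tfrac12$ and $a_i \in [\kappa,\, \tfrac12 - (d-1)\kappa]$, hence $\tfrac12 - \sum_i a_i (x_i/|x|)^2 \ge (d-1)\kappa$ --- is exactly the structural point the paper exploits (written there as the bound $\sum_i a_i x_i^2 \le (\tfrac12 - \kappa)|x|^2$). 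The one place the mechanics genuinely differ is \eqref{eq:L-e^x2/R2}: the paper never Taylor-expands but instead manipulates the exact discrete increment via the elementary inequalities $e^a+e^{-a}\ge 2+a^2$ and $e^{-a}\ge 1-a$, which sidesteps the double use of $R\ge A^2$ (once for the remainder, once for the unit-step ratio $u(y)/u(x)$) that you flag as the delicate step; your Taylor route works as well but needs precisely that extra care. A cosmetic difference in \eqref{eq:20200203-1}: the paper bounds the discrete-vs-continuous error via the third derivative (Lagrange remainder at second order) while you use the fourth; either gives an error of smaller order than the leading $\asymp \alpha R^{-2}$ term once $R$ is large, so both close the argument.
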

The proof of Lemma~\ref{lem:test-exponential} is in Section~\ref{asection:test-fun} of the Appendix.

\subsection{Upper bounds of Green's functions}
Recall $\ms X_q$ in Proposition~\ref{thm:quant-homo}.
For any $\error\in(0,1)$, we write
\[R_0=R_0(\omega,d,\kappa,\error):=\ms X_{d-\error}^{d/(d-\error)}+K, \]
where $K$ is a sufficiently large constant depending on $(d,\kappa)$, and denote the exit time from $B_{R_0}$ as 
\begin{equation}\label{eq:def_r0s0}
s_0=\min\{n\ge 0:X_n\notin B_{R_0}\}.
\end{equation}
Note that $R_0$ plays the role of a "homogenization radius" in the sense that for all $R\ge  R_0$ and $y\in B_{3R}$,  the upper bound in \eqref{eq:homo-err} can be replaced by the algebraic term $CA_1 R^{-\alpha\beta}$.

Let $\alpha=\alpha(d,\kappa)>0$ be a constant to be determined in Lemma~\ref{lem:h_properties}, and set
\[
C_{\alpha,R}:=\frac{[\tfar(R/2)-\tfar(R)]R^{d-2}}{e^{-\alpha+2\alpha/R}-e^{-2\alpha}}
\asymp
\alpha^{-1}, \quad\text{when } R\ge R_0.
\]
\begin{definition}\label{def:h}
Let $\tfar, \tfa$ be as in Lemma~\ref{lem:tfab}.  For any fixed $R\ge 4R_0$, 
we define a function $h:\bar B_R\to[0,\infty)$ by
\[
h(x)=\left\{
\begin{array}{lr}
h_1(x), &x\in B_{R_0},\\
h_2(x), &x\in  B_{R/2}\setminus B_{R_0},\\
h_3(x), &x\in\bar B_{R}\setminus B_{R/2},
\end{array}
\right.
\]
where the functions $h_1, h_2, h_3$, are defined as below
\begin{align*}
h_2(y)&=R_0^{d-1}\left[
(\alpha^{-1}-1)(\tfar(R/2)-\tfar(R))+\tfa(y)-\tfar(R)
\right] ,\quad y\in\R^d\setminus\{0\},\\
h_1(x)&=E^x_\omega[h_2(X_{s_0})+|X_{s_0}|-|x|],\quad x\in\bar B_{R_0},\\
h_3(y)&=R_0^{d-1}\alpha^{-1}C_{\alpha,R}R^{2-d}
 [e^{-2\alpha(|y|-1)/R}-e^{-2\alpha}],\quad y\in\R^d.
\end{align*}	
\end{definition}

\begin{lemma}\label{lem:h_properties}
When $R\ge 4R_0$, there exists a constant $\alpha>0$ such that the functions $h_1,h_2,h_3, h$ given in Definition~\ref{def:h} have the following properties.
\begin{enumerate}[(a)]
\item\label{item:2d-h-1} $L_\omega h_1(x)=-L_\omega(|x|)\le -\mathbbm{1}_{x=0}$ for $x\in B_{R_0}$;
\item\label{item:2d-h-2} $h_1=h_2$ on $\partial B_{R_0}$, and $h_2=h_3$ on $\partial \B_{R/2}$;
\item\label{item:2d-h-3} $h_2\ge h_1$ in $B_{R_0}\setminus B_{R_0/2}$. 
\item\label{item:2d-h-4} $h_2\ge h_3$ in $\B_R\setminus\B_{R/2}$, and $h_2\le h_3$ in $\B_{R/2}\setminus \B_{R/2-1}$.
\item\label{item:2d-h-5} 
 $L_\omega h_3\le 0$ in $B_R\setminus B_{R/2}$.
\end{enumerate}
\end{lemma}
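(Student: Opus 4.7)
The plan is to treat parts (a), (b), (e) as direct consequences of the definitions and earlier lemmas, and to spend the main effort on (c) and (d), which require delicate comparisons among $h_1$, $h_2$, and $h_3$. For (a), the strong Markov property implies that both $x \mapsto E^x_\omega[h_2(X_{s_0})]$ and $x \mapsto E^x_\omega[|X_{s_0}|]$ are $\omega$-harmonic on $B_{R_0}$, so $L_\omega h_1(x) = -L_\omega(|\cdot|)(x)$. The balanced condition gives $L_\omega(|\cdot|)(0) = \sum_i[\omega(0,e_i)+\omega(0,-e_i)] = 1$, while convexity of the Euclidean norm yields $|x+e_i|+|x-e_i|\ge 2|x|$ and hence $L_\omega(|\cdot|)(x)\ge 0$ elsewhere, which gives the claim. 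For (b), on $\partial B_{R_0}$ the stopping time $s_0$ vanishes and $h_1=h_2$ is immediate; on the sphere $|y|=R/2$, a direct plug-in using the definition of $C_{\alpha,R}$ gives $h_2(y)=h_3(y)=R_0^{d-1}\alpha^{-1}[\tfar(R/2)-\tfar(R)]$. For (e), the constant $-e^{-2\alpha}$ in $h_3$ is annihilated by $L_\omega$, so $L_\omega h_3$ is a positive multiple of $L_\omega(e^{-2\alpha|y|/R})$, and the claim is exactly Lemma~\ref{lem:test-exponential}, eq.~\eqref{eq:20200203-1}.

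Part (c) is the most delicate. I would rewrite
\[
h_2(x)-h_1(x) = E^x_\omega\bigl[R_0^{d-1}\bigl(\tfar(|x|)-\tfar(|X_{s_0}|)\bigr) - (|X_{s_0}|-|x|)\bigr]
\]
and analyze the integrand pointwise on each sample path. Since the walk has unit steps, $|X_{s_0}| \in [R_0, R_0+O(1)]$, and $|x| \in (R_0/2, R_0)$. Using the asymptotics $\tfar(r) = r^{2-d}(1+o(1))$ from Lemma~\ref{lem:tfab}(i) for $d\ge 3$ (with the analogous logarithmic statement when $d=2$), the leading behavior is
\[
R_0^{d-1}\bigl(\tfar(|x|)-\tfar(|X_{s_0}|)\bigr) \approx R_0\bigl[(R_0/|x|)^{d-2}-1\bigr], \qquad |X_{s_0}|-|x| \approx R_0 - |x|,
\]
which reduces the claim to the elementary inequality $u^{d-2}-2+u^{-1}\ge 0$ for $u := R_0/|x|\ge 1$; this holds for all $d\ge 3$ because $u=1$ is the global minimum, and an analogous computation handles $d=2$. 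The strict superharmonicity $-\Delta\tfa \ge |y|^{-(2+\delta)}|\tfa(y)|$ from Lemma~\ref{lem:tfab}(ii), together with the $C^k$ bounds of Lemma~\ref{lem:tfab}(iii), provides the quantitative slack needed to absorb the corrections coming from the actual geometry of $\partial B_{R_0}$ and from replacing the continuous Taylor expansion by its discrete counterpart.

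For (d), I would compare $h_2$ and $h_3$ via their radial profiles near the sphere $|y|=R/2$, where they already agree by (b). A direct calculation of the radial derivatives at $|y|=R/2$ shows that, upon choosing $\alpha=\alpha(d,\kappa)$ small enough, one has $h_2'(R/2)\ge h_3'(R/2)$ (both negative). A first-order comparison then yields, in a neighborhood of $|y|=R/2$, both inequalities $h_2\ge h_3$ for $|y|>R/2$ and $h_2\le h_3$ for $|y|<R/2$; the latter already covers the thin shell $\B_{R/2}\setminus\B_{R/2-1}$. To propagate $h_2\ge h_3$ across the full annulus $\B_R\setminus \B_{R/2}$, I would compare the explicit radial profiles of $\tfar$ and the exponential directly, e.g.\ by showing monotonicity of the ratio $h_2/h_3$ or by ruling out interior critical points of $h_2-h_3$ using the explicit ODE structure.

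The hard part is (c): the leading-order heuristic yields equality $h_2=h_1$ exactly in the limit $|x|\to R_0$ (and sharply so when $d=3$), so the rigorous argument must quantify all the lower-order corrections — the fluctuation of $|X_{s_0}|$ around $R_0$, the discrete-to-continuous discrepancy for the Laplacian, and the strict superharmonicity of $\tfa$ supplied by the $e^{-r^{-\delta}/\delta}$ factor — and show that they conspire to preserve positivity. All of these ingredients are supplied by Lemma~\ref{lem:tfab}, but the bookkeeping constitutes the bulk of the technical work.
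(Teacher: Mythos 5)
Parts (a), (b), (e) match the paper, which disposes of them as immediate; your unpacking of (a) (harmonicity of $x\mapsto E_\omega^x[\cdot]$, convexity of the norm, and the normalization $\sum_y\omega(0,y)=1$) and of (e) (the constant $-e^{-2\alpha}$ is killed by $L_\omega$, then invoke \eqref{eq:20200203-1}) is exactly the intended argument.

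For (c) you start from the right identity, but you then diverge from the paper and the divergence introduces a genuine gap. The paper's observation is to package everything into a single radial function $f(r)=r+R_0^{d-1}\tfar(r)$, so that $h_2(x)-h_1(x)=E_\omega^x[f(|x|)-f(|X_{s_0}|)]$, and then show that $f$ is monotone decreasing on $[R_0/2,R_0+1]$ using $-\tfar'(r)\asymp r^{1-d}$ from Lemma~\ref{lem:tfab}\eqref{item:tfab_der} and $R_0\ge K$ large; since $|x|<|X_{s_0}|$ on every sample path, the conclusion is immediate. Your proposal instead replaces $\tfar$ by the pure power $r^{2-d}$ and $|X_{s_0}|$ by $R_0$, reducing to $u^{d-2}+u^{-1}-2\ge 0$ with $u=R_0/|x|$. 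This has two problems. First, for $d=3$ that elementary inequality has a \emph{double} zero at $u=1$, so near $|x|\approx R_0$ the slack is only $O((R_0-|x|)^2/R_0^2)$, whereas the error from $|X_{s_0}|\in[R_0,R_0+1)$ (a unit-size perturbation) is of the same order as the whole quantity $R_0^{d-1}[\tfar(R_0)-\tfar(|X_{s_0}|)]=O(1)$; your heuristic therefore does not control the actual comparison when $|x|$ is close to $R_0$. Second, your claim that the factor $e^{-r^{-\delta}/\delta}$ and the strict superharmonicity of $\tfa$ ``provide the quantitative slack'' has the wrong sign: for $d\ge 3$, $-\tfar'(r)=r^{1-d}e^{-r^{-\delta}/\delta}[(d-2)-r^{-\delta}]$ is \emph{strictly smaller} than $(d-2)r^{1-d}$, so the exponential correction makes $\tfar$ less steeply decreasing and works \emph{against} the inequality $f'\le 0$, not for it. The superharmonicity of $\tfa$ is designed to absorb the homogenization error later in the proof of Theorem~\ref{thm:Green-bound}; it is not the mechanism that makes (c) go through.

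For (d), you propose a first-order comparison of radial derivatives at $|y|=R/2$ and then an unspecified ``propagation'' across the annulus. The paper's proof is cleaner and complete: it writes $h_2(y)-h_3(y)=R_0^{d-1}a(|y|)+A(R_0,R)$ and shows $a'(r)\ge 0$ on the whole interval $[R/2-1,R]$ by comparing $-\tfar'(r)\lesssim r^{1-d}$ with the derivative of the exponential term $\asymp \alpha^{-1}R^{1-d}$ (choosing $\alpha$ small). Global monotonicity of $h_2-h_3$ on the annulus, combined with $h_2=h_3$ on $\partial\B_{R/2}$, yields both inequalities of (d) simultaneously. Your first-order comparison at a single radius does not establish this without the additional global argument that the paper supplies; if you want to complete your version, you should compute $a'$ over the whole interval rather than only at $r=R/2$.
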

\begin{proof}
(a) and (b) are obvious. To see \eqref{item:2d-h-3} , 
note that $h_2(x)-h_1(x)=E_\omega^x[f(|x|)-f(|X_{s_0}|)]$, where $f(r)=r+R_0^{d-1}\tfar(r)$. Since $R_0\ge K$, by Lemma~\ref{lem:tfab}\eqref{item:tfab_der}, taking $K$  sufficiently large, $f(r)$ is a decreasing function for $r\in [R_0/2,R_0+1]$.  

Next, we will prove \eqref{item:2d-h-4} .
 Indeed, we can write, for $y\in\R^d\setminus\{0\}$,
 \[
h_2(y)-h_3(y)=
R_0^{d-1}a(|y|)+A(R_0,R),
\] 
where $A(R_0,R)$ is a constant, and $a(r)=\tfar(r)-\alpha^{-1}C_{\alpha,R}R^{2-d}e^{-2\alpha(r-1)/R}$. For $r\in[R/2-1, R]$, by Lemma~\ref{lem:tfab}\eqref{item:tfab_der}, taking $\alpha>0$ sufficiently small, 
we have 
\[
a'(r)
\ge -Cr^{1-d}+2C_{\alpha,R} e^{-2\alpha} R^{1-d}
\ge -Cr^{1-d}+c\alpha^{-1} R^{1-d}
\ge 0.
\]
Hence, $h_2-h_3$ is radially increasing in $\B_{R}\setminus \B_{R/2-1}$. Item \eqref{item:2d-h-4} then follows from the fact that $h_2-h_3=0$ on $\partial\B_{R/2}$.  

Item \eqref{item:2d-h-5} is a consequence of \eqref{eq:20200203-1} in Lemma~\ref{lem:test-exponential}.
\end{proof}

\begin{proof}
[Proof of the upper bound in  Theorem~\ref{thm:Green-bound}]
For $0< a<b$ and $d\ge 2$, we have an elementary inequality
\begin{equation}
\label{eq:20200607-3}
b-a\le b^{d-1}(U(a)-U(b)).
\end{equation}

When $R\in(1, 4R_0]$, note that $L_\omega[G_R(x)-(R+1-|x|)]\ge 0$, and $G_R=0\le R+1-|x|$ on $\partial B_R$. By the maximum principle, we have, for $x\in B_R$,
\begin{equation*}
G_R(x)\le R+1-|x|\by{\eqref{eq:20200607-3}}{\le} (R+2)^{d-1}[U(|x|+1)-U(R+2)].
\end{equation*}
Hence the upper bound in  Theorem~\ref{thm:Green-bound} holds when $R\in(1, 4R_0]$. 
It remains to consider the case 
$R>4R_0$. 

First, we will prove via contradiction that 
\begin{equation}
\label{eq:2d-green<h}
G_R\le h  \quad \text{ in }  \bar B_R.
\end{equation}
 Assume by contradiction that \eqref{eq:2d-green<h} fails, i.e., $\max_{\bar B_R}(G_R-h)>0$. 
By Lemma~\ref{lem:h_properties}\eqref{item:2d-h-1}, $L_\omega(G_R-h)\ge 0$ in $B_{R_0}$  and so $\max_{\bar B_R}(G_R-h)$ is achieved outside of $B_{R_0}$. 
Further, note that $(G_R-h)|_{\partial B_R}=(-h_3)|_{\partial B_R}\le 0$.
By Lemma~\ref{lem:h_properties}\eqref{item:2d-h-5}, $L_\omega(G_R-h_3)\ge 0$ in $B_R\setminus B_{R/2}$, and so, by the maximum principle and Lemma~\ref{lem:h_properties}\eqref{item:2d-h-4}, 
\[
\max_{\bar B_R\setminus B_{R/2}}(G_R-h)
\le \max_{\partial(B_R\setminus B_{R/2})}(G_R-h_3)
\le 0\vee \max_{B_{R/2}\setminus B_{R_0}}(G_R-h).
\]
Hence, if $\max_{\bar B_R}(G_R-h)>0$, then there exists $x_0\in  B_{R/2}\setminus B_{R_0}$ so that 
\[
(G_R-h)(x_0)=\max_{\bar B_R}(G_R-h)>0.
\]
Since $x_0\in  B_{R/2}\setminus B_{R_0}$, by Lemma~\ref{lem:h_properties}\eqref{item:2d-h-3}\eqref{item:2d-h-4}, 
\[
(G_R-h_2)(x_0)\ge 
\max_{\bar B_{|x_0|/2}(x_0)}(G_R-h_2),
\] 
which is equivalent to
\begin{equation}\label{eq:20200204-4}
(G_R-R_0^{d-1}\zeta)(x_0)\ge \max_{\bar B_{|x_0|/2}(x_0)}(G_R-R_0^{d-1}\zeta).
\end{equation}

Without loss of generality, assume that $\bar a=I$, and set
\[
\tilde \zeta(y):=\tfa(y)+c|x_0|^{-(2+\delta)}\abs{\tfa(x_0)}|y-x_0|^2, 
\quad
y\in \R^d\setminus\{0\},
\]
where $c>0$ is chosen so that (by Lemma~\ref{lem:tfab}\eqref{item:tfab_laplace}) $\Delta\tilde{\zeta}(y)\le 0$ for $y\in \B_{|x_0|/2}(x_0)$. Then (by Lemma~\ref{lem:tfab}\eqref{item:tfab_ck}) $|D\tilde{\zeta}|\le C|x_0|^{-1}|\zeta(x_0)|$ in $\B_{1+0.5|x_0|}(x_0)$,  and
\begin{equation}\label{eq:20200204-3}
(G_R-R_0^{d-1}\tilde\zeta)(x_0)
\stackrel{\eqref{eq:20200204-4}}{\ge}
 \max_{\partial B_{|x_0|/2}(x_0)}(G_R-R_0^{d-1}\tilde{\zeta})+CR_0^{d-1}|x_0|^{-\delta}\abs{\tfa(x_0)}.
\end{equation}
Let $\bar v:\bar\B_1\to\R$ and $v:\bar B_{|x_0|/2}(x_0)\to\R$ be the solutions of (Here $\bar a=I$.)
\[
\left\{
\begin{array}{lr}
\tr(\bar aD^2\bar v)=\Delta \bar v=0 & x\in \B_1\\
\bar v(x)=R_0^{d-1}\tilde\zeta(x_0+\tfrac{|x_0|}{2}x) & x\in\partial\B_1,
\end{array}
\right.
\]
and
\[
\left\{
\begin{array}{lr}
L_\omega v(x)=0 & x\in B_{|x_0|/2}(x_0)\\
v(x)=R_0^{d-1}\bar v(\tfrac{x-x_0}{|x-x_0|}) & x\in\partial B_{|x_0|/2}(x_0).
\end{array}
\right.
\]

We will show that $v$ can be controlled by $R_0^{d-1}\tilde{\zeta}$  both on the boundary and inside of $B_{|x_0|/2}(x_0)$. Indeed,  for $x\in\partial B_{|x_0|/2}(x_0)$, 
\begin{align}\label{eq:220925-1}
\abs{v(x)-R_0^{d-1}\tilde{\zeta}(x)}
&=
R_0^{d-1}\Abs{\tilde\zeta(x_0+\tfrac{|x_0|(x-x_0)}{2|x-x_0|}) -\tilde{\zeta}(x)}\nn\\
&\le R_0^{d-1}
\sup_{\bar\B_{1+|x_0|/2}(x_0)}
\abs{D\tilde{\zeta}}\nn\\
&\le 
C R_0^{d-1}|x_0|^{-1}\abs{\tfa(x_0)}.
\end{align}
For $x\in B_{|x_0|/2}(x_0)$,  applying Proposition~\ref{thm:quant-homo} to the case $\alpha=1$,  there exists $\beta=\beta(d,\kappa,\varepsilon)\in(0,1)$ such that
\begin{align}\label{eq:20200607-2}
v(x)
&\le 
\bar v(\tfrac{x-x_0}{|x_0|/2})+C|x_0|^{-\beta}R_0^{d-1}\sup_{y\in\partial\B_1}\Abs{D\tilde\zeta(x_0+\tfrac{|x_0|}{2}y)}\nn\\
&\le 
\bar v(\tfrac{x-x_0}{|x_0|/2})+CR_0^{d-1}|x_0|^{-\beta}\abs{\tfa(x_0)}.
\end{align}
Furthermore,  using the fact that $\Delta\tilde\zeta(x_0+\tfrac{|x_0|}{2}x)\le 0$ for $x\in\B_1$, we get $\bar v(x)\le R_0^{d-1}\tilde\zeta(x_0+\tfrac{|x_0|}{2}x)$ in $\B_1$.  This, together with \eqref{eq:20200607-2}, yields, for $x\in B_{|x_0|/2}(x_0)$,
\begin{equation}\label{eq:220925-2}
v(x)\le 
R_0^{d-1}\tilde\zeta(x)+CR_0^{d-1}|x_0|^{-\beta}\abs{\tfa(x_0)}.
\end{equation}
Notice that $(G_R-v)$ is an $\omega$-harmonic function on $B_{|x_0|/2}(x_0)$,  and so
\[
\max_{B_{|x_0|/2}(x_0)}(G_R-v)\le \max_{\partial B_{|x_0|/2}(x_0)}(G_R-v).
\]
Therefore, combining this inequality and \eqref{eq:220925-1}, \eqref{eq:220925-2}, we get
\[
\max_{B_{|x_0|/2}(x_0)}(G_R-R_0^{d-1}\tilde\zeta)
\le 
\max_{\partial B_{|x_0|/2}(x_0)}(G_R-R_0^{d-1}\tilde\zeta)+CR_0^{d-1}|x_0|^{-\beta}\abs{\tfa(x_0)}
\]
which contradicts \eqref{eq:20200204-3},  since $|x_0|\in(R_0,R)$,   $\delta=\beta/2$ by definition in Lemma~\ref{lem:tfab}, and $R_0\ge K$ is chosen to be sufficiently large. 
Inequality \eqref{eq:2d-green<h} is proved.

Finally, when $x\in  B_{R/2}\setminus B_{R_0}$,  we have $G_R(x)\by{\eqref{eq:2d-green<h}}{\le} h_2(x)$, and, by Lemma~\ref{lem:tfab}\eqref{item:tfab_der}, 
\begin{align}\label{eq:G-h-in-ring}
h_2(x)
&\le R_0^{d-1}\alpha^{-1}[\tfar(|x|)-\tfar(R)]\nn\\
&\lesssim R_0^{d-1}\int_{|x|}^{R}(-\tfar)'(r)\dd r
\nn\\&
\lesssim R_0^{d-1}\int_{|x|}^R r^{1-d}\dd r
\lesssim R_0^{d-1}[U(|x|)-U(R)].
\end{align}
When $x\in B_{R_0}\setminus\{0\}$, 
\begin{align*}
G_R(x)
&\stackrel{\eqref{eq:2d-green<h}}{\le} h_1(x)=
E_\omega^x [h_2(X_{s_0})+|X_{s_0}|-|x|]\\
&\by{\eqref{eq:G-h-in-ring}, \eqref{eq:20200607-3}}\lesssim 
R_0^{d-1}E_\omega^x[U(|X_{s_0}|)-U(R)+U(|x|)-U(|X_{s_0}|)]\\
&=R_0^{d-1}[U(|x|)-U(R)].
\end{align*}
Note that, for $|x|\ge 1$, $U(|x|)-U(R)\lesssim U(|x|+1)-U(R+2)$.

When $x\in B_R\setminus  B_{R/2}$, 
\begin{align*}
G_R&\le h_3\le CR_0^{d-1}R^{2-d}(e^{-2\alpha(|x|-1)/R}-e^{-2\alpha})\\
&\lesssim
R_0^{d-1}R^{2-d}(1-\tfrac{|x|-1}{R})=R_0^{d-1}R^{1-d}(R+1-|x|)\\
&\by{\eqref{eq:20200607-3}}\lesssim
R_0^{d-1}[U(|x|+1)-U(R+2)].
\end{align*}
The upper bound in Theorem~\ref{thm:Green-bound} is proved by putting $\tx=R_0$.
\end{proof}

\subsection{Lower bounds of Green's functions}

The proof of the lower bound of Theorem~\ref{thm:Green-bound}, which is similar to that of Theorem~\ref{thm:Green-bound}, is via comparing $G_R$ to appropriate test functions. However, unlike the Green function in the whole space, $G_R$ is defined only in bounded region, and so the test functions should be carefully designed to capture the behavior of $G_R$ near the boundary.


\begin{lemma}\label{lem:eta}
Define $\tfer:\R\to(0,\infty)$ as $\tfer(r)=(1+r^2)^{-\theta}$, where
\begin{equation}\label{put:theta}
\theta:=1/(4\kappa)\ge d/2.
\end{equation}
Define $\tfe:\R^d\to\R$ as
\[
\tfe(y)=\tfer(|y|).
\]
There exists a constant $C_0=C_0(d,\kappa)>0$ such that, for $x\in\Z^d$,
\[
L_\omega\eta(x)\ge -\mathbbm{1}_{x\in B_{C_0\theta^2}}.
\]
\end{lemma}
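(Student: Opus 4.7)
The plan is to compare $L_\omega\eta$ with the continuous differential operator
$\tilde L_\omega u(x) := \tfrac{1}{2\tr\omega(x)}\sum_i\omega_i(x)\partial_i^2 u(x)$
applied to the smooth extension of $\eta$ to $\R^d$, and then to control the discretization error by a Taylor expansion tuned to the specific choice $\theta=1/(4\kappa)$. For small $|x|$ the statement is immediate: since $0\le\eta\le 1$ pointwise,
\[
L_\omega\eta(x) = \tfrac{1}{2}\sum_i \tfrac{\omega_i(x)}{\tr\omega(x)}\bigl[\eta(x+e_i)+\eta(x-e_i)-2\eta(x)\bigr] \ge -\eta(x) \ge -1
\]
for every $x\in\Z^d$, which already gives the desired inequality whenever $x\in B_{C_0\theta^2}$. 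So it remains to prove $L_\omega\eta(x)\ge 0$ for $|x|\ge C_0\theta^2$ with $C_0=C_0(d,\kappa)$ sufficiently large.

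For the continuous part, a direct calculation using
$\partial_i^2\eta(y)=-2\theta(1+|y|^2)^{-\theta-1}+4\theta(\theta+1)y_i^2(1+|y|^2)^{-\theta-2}$
together with the ellipticity bound $\sum_i(\omega_i/\tr\omega)y_i^2\ge 2\kappa|y|^2$ gives
\[
\tilde L_\omega\eta(x)\ge\theta(1+|x|^2)^{-\theta-2}\bigl[(4\kappa(\theta+1)-1)|x|^2-1\bigr].
\]
The crucial arithmetic is that $\theta=1/(4\kappa)$ makes $4\kappa(\theta+1)-1=4\kappa$ and $\theta\cdot 4\kappa=1$, so the bracket collapses to
\[
\tilde L_\omega\eta(x)\ge(1+|x|^2)^{-\theta-2}\bigl(|x|^2-\theta\bigr),
\]
already non-negative for $|x|^2\ge\theta$.

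To estimate the discretization error I apply the symmetric finite-difference identity $f(1)+f(-1)-2f(0)=f''(0)+\tfrac{1}{12}f^{(4)}(\xi)$ (valid for $f\in C^4$ with some $\xi\in(-1,1)$, via the integral remainder plus the intermediate value theorem) to $f(t)=\eta(x+te_i)$, obtaining $|L_\omega\eta(x)-\tilde L_\omega\eta(x)|\le\tfrac{1}{12}\sup_{i,|t|\le 1}|\partial_i^4\eta(x+te_i)|$. A direct computation of $\partial_i^4\eta$ followed by the routine bounds $|y_i|^{2k}\le(1+|y|^2)^k$ yields
\[
|\partial_i^4\eta(y)|\le C\theta^4(1+|y|^2)^{-\theta-2}\quad\text{for } |y|\ge 1.
\]
In the regime $|x|\ge C_0\theta^2\ge C_0\theta$ (using $\theta\ge 1$), a unit shift perturbs $(1+|x+te_i|^2)^{-\theta-2}$ only by a multiplicative factor of the form $(1+O(1/|x|))^{\theta+2}\le\exp(O(\theta/|x|))$, which remains universally bounded. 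Combining everything,
\[
L_\omega\eta(x)\ge(1+|x|^2)^{-\theta-2}\bigl(|x|^2-\theta-C'\theta^4\bigr),
\]
which is non-negative as soon as $|x|^2\ge 2C'\theta^4$, i.e.\ $|x|\ge C_0\theta^2$ for $C_0$ large enough depending on $(d,\kappa)$.

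The main obstacle is precisely this competition of scales. The Taylor remainder carries a prefactor as large as $\theta^4$, while the continuous surplus $|x|^2-\theta$ in the lower bound is only of order $|x|^2$; the cutoff $|x|\ge C_0\theta^2$ arises because $|x|^2$ must overwhelm $\theta^4$. The reason the quadratic-in-$\theta$ radius suffices (rather than a higher power) is the cancellation $\theta\cdot 4\kappa=1$ produced by $\theta=1/(4\kappa)$, which promotes the coefficient of $|x|^2$ in the continuous bound from something of order $\kappa$ to a universal constant; with any smaller $\theta$ the continuous contribution would be too weak to beat the Taylor error. Verifying that $(1+|\xi_i|^2)^{-\theta-2}$ is comparable to $(1+|x|^2)^{-\theta-2}$ also requires care because the exponent grows with $\theta$, but the regime $|x|\gtrsim\theta^2$ is easily large enough to absorb this.
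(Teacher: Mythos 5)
Your proof is correct and follows essentially the same strategy as the paper's: handle all $x$ uniformly via $L_\omega\eta(x)\ge -\eta(x)\ge -1$, compare $L_\omega$ to its continuous counterpart, use the ellipticity bound $\sum_i(\omega_i/\tr\omega)x_i^2\ge 2\kappa|x|^2$ and the specific tuning $\theta=1/(4\kappa)$ to force the continuous part to be nonnegative once $|x|^2\ge\theta$, and then absorb the discretization error for $|x|$ large.

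The only substantive difference is in the discretization error. The paper bounds $|\nabla_i^2\eta-\partial_i^2\eta|$ via the \emph{third} derivative, giving an error of order $\theta^3|x|(1+|x|^2)^{-\theta-2}$, whereas you exploit the symmetry of the central second difference to get a \emph{fourth}-derivative remainder of order $\theta^4(1+|x|^2)^{-\theta-2}$. Your estimate is sharper: for $|x|\gtrsim\theta^2$ one has $\theta^3|x|\gg\theta^4$, so the paper's version only forces positivity once $|x|\gtrsim\theta^3$, while yours achieves it already at $|x|\gtrsim\theta^2$, literally matching the $\theta^2$ radius in the statement. Of course, since $C_0$ is allowed to depend on $\kappa$ (hence on $\theta$), both radii are admissible and the conclusion is identical; the refinement is aesthetic rather than necessary. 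Minor bookkeeping: your error bound should carry an extra factor $1/2$ from the definition $L_\omega=\frac{1}{2\tr\omega}\sum_i\omega_i\nabla_i^2$ (you wrote $1/12$ rather than $1/24$), but this does not affect the argument.
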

The proof of Lemma~\ref{lem:eta} is in Section~\ref{asection:test-fun} of the Appendix.\\

Let $\gamma=\gamma(\kappa)>0$ be a large constant to be determined, and set
\[
C_{\gamma,R}:=\frac{[\tfbr(R/2)-\tfbr(R)]R^{d-2}}{e^{-\gamma/4}-e^{-\gamma}}
\asymp 
e^{\gamma/4}
\quad
\text{ when }R\ge R_0.
\]
\begin{definition}\label{def:ell}
Recall $\tfb,\tfbr, \tfe,\tfer, \theta$ be in Lemma~\ref{lem:tfab} and Lemma~\ref{lem:eta}. 
 For any fixed $R\ge 4R_0$, we define three functions $\ell_i$, $i=1,2,3$, as
\begin{align*}
\ell_2(y)
&=R_0^{d-2-2\theta}\left[
(\gamma^{-2}-1)(\tfbr(R/2)-\tfbr(R))+\tfb(y)-\tfbr(R))
\right], \quad y\in\R^d\setminus\{0\};\\
\ell_1(x)
&=E_\omega^x[\ell_2(X_{s_0})+\eta(x)-\eta(X_{s_0})], \quad x\in \bar  B_{R_0};\\
\ell_3(y)&=R_0^{d-2-2\theta}\gamma^{-2} C_{\gamma,R} R^{2-d}(e^{-\gamma |y|^2/R^2}-e^{-\gamma}), \quad y\in\R^d,
\end{align*}
Also, for $R\ge 4R_0$, we define a function $\ell:\bar B_R\to\R$ by 
\[
\ell(x)
=\left\{
\begin{array}{lr}
\ell_1(x), & x\in B_{R_0},\\
\ell_2(x), &x\in B_{R/2}\setminus B_{R_0},\\
\ell_3(x), &x\in \bar B_R\setminus  B_{R/2}.
\end{array}
\right.
\]
\end{definition}

\begin{lemma}
\label{lem:ell_properties}
When $R\ge 4R_0$, there exists a constant $\gamma>0$ such that the functions $\ell_1,\ell_2,\ell_3,\ell$ given in Definition~\ref{def:ell} have the following properties.
\begin{enumerate}[(a)]
\item\label{item:ell-2d-1} $L_\omega \ell_1=L_\omega\eta\ge-\mathbbm{1}_{x\in B_{C_0\theta^2}}$ for $x\in B_{R_0}$;
\item\label{item:ell-2d-2} $\ell_1=\ell_2$ on $\partial B_{R_0}$, and $\ell_2=\ell_3$ on $\partial\B_{R/2}$;
\item\label{item:ell-2d-3} $\ell_2\le \ell_1$ in $B_{R_0}\setminus B_{R_0/2}$. 
\item\label{item:ell-2d-4}
$\ell_2\le \ell_3$ in $\B_R\setminus\B_{R/2}$, and $\ell_2\ge\ell_3$ in $\B_{R/2}\setminus\B_{0.5R-2}$. 
\item\label{item:ell-2d-5}
$L_\omega\ell_3\ge 0$ in $B_R\setminus B_{R/2}$. 
\end{enumerate}
\end{lemma}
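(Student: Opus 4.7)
The plan is to verify the five items in order, paralleling the structure of Lemma~\ref{lem:h_properties} but with reversed inequalities. Items (a), (b), (e) are essentially immediate, while (c) and (d) require a radial derivative comparison.

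For (a), I would rewrite $\ell_1(x) = \eta(x) + E_\omega^x[\ell_2(X_{s_0}) - \eta(X_{s_0})]$. The second term is $\omega$-harmonic on $B_{R_0}$ (as the harmonic extension of boundary data), so $L_\omega \ell_1 = L_\omega \eta$ on $B_{R_0}$, and Lemma~\ref{lem:eta} yields the claim. For (b), the equality $\ell_1 = \ell_2$ on $\partial B_{R_0}$ is immediate since $s_0 = 0$ there, and the equality $\ell_2 = \ell_3$ at $|y| = R/2$ is a direct substitution that uses the defining choice of $C_{\gamma,R}$. Item (e) follows from \eqref{eq:L-e^x2/R2} in Lemma~\ref{lem:test-exponential}: the prefactor of $\ell_3$ is positive, the additive constant $-e^{-\gamma}$ is killed by $L_\omega$, and the hypothesis $R \ge \gamma^2$ is guaranteed by taking $K$ in the definition of $R_0$ large enough (with $\gamma$ depending only on $\kappa$).

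For (c), a short computation gives $\ell_1(x) - \ell_2(x) = E_\omega^x[g(|X_{s_0}|) - g(|x|)]$ with $g(r) := R_0^{d-2-2\theta}\tfbr(r) - \tfer(r)$ (the additive constant drops out). The aim is to show $g$ is non-decreasing on $[R_0/2, R_0 + 1]$. Lemma~\ref{lem:tfab}\eqref{item:tfab_der} gives $|\tfbr'(r)| \le (d - 0.5) r^{1-d}$, while direct differentiation yields $|\tfer'(r)| = 2\theta r (1+r^2)^{-\theta-1}$, which for $r \asymp R_0$ with $R_0 \ge K$ is at least $(2\theta - o(1)) r^{-2\theta-1}$. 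After the $R_0^{d-2-2\theta}$ normalization, both derivative terms are of order $R_0^{-2\theta-1}$, and the choice $\theta = 1/(4\kappa) \ge d/2$ (using the sharper leading constants of $\tfbr'$ in each dimension) provides the margin for $|\tfer'|$ to dominate, giving $g'(r) \ge 0$.

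Item (d) is the most delicate. Set $f(r) := \ell_3(r) - \ell_2(r)$, so $f(R/2) = 0$ by (b); it suffices to show $f'(r) > 0$ on $[R/2 - 2, R]$. A direct computation gives
\[
|\ell_3'(r)| = 2 R_0^{d-2-2\theta}\gamma^{-1} C_{\gamma,R}\, r R^{-d} e^{-\gamma r^2/R^2}, \qquad -\ell_2'(r) \ge 0.5\, R_0^{d-2-2\theta} r^{1-d}.
\]
Using the asymptotic $C_{\gamma,R} e^{-\gamma/4} \asymp R^{d-2}[\tfbr(R/2) - \tfbr(R)] \asymp 1$ (uniformly for large $R$, checked separately in dimensions $d = 2$ and $d \ge 3$), the ratio $|\ell_3'(r)|/(-\ell_2'(r))$ is bounded by $C\gamma^{-1} (r/R)^d e^{-\gamma(r^2/R^2 - 1/4)}$, which is uniformly small on $[R/2 - 2, R]$ once $\gamma = \gamma(\kappa)$ is chosen sufficiently large. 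This forces $f'(r) > 0$ throughout, so $f > 0$ on $(R/2, R]$ and $f < 0$ on $[R/2 - 2, R/2)$, proving (d). The main obstacle is the bookkeeping in this step: the exponential blowup $C_{\gamma,R} \asymp e^{\gamma/4}$ must be precisely absorbed by the Gaussian factor $e^{-\gamma r^2/R^2}$ at the critical radius $r = R/2$, with the auxiliary $\gamma^{-1}$ providing the saving margin after the cancellation, and one must verify the resulting inequality survives uniformly on the whole interval $[R/2-2, R]$ rather than only at $r = R/2$.
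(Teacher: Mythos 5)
Your proof is correct and follows essentially the same route as the paper: harmonicity of the $E_\omega^x[\cdot]$ term for (a), the defining normalization of $C_{\gamma,R}$ for (b), Lemma~\ref{lem:test-exponential} for (e), and radial-derivative comparisons for (c) and (d) with the same leading-order bookkeeping for $\tfbr'$, $\tfer'$, and $C_{\gamma,R}\asymp e^{\gamma/4}$. You have also correctly identified that the monotonicity needed in (c) is \emph{increasing} (the paper's phrase ``$f$ is decreasing'' after deriving $f'\ge 0$ is a typo) and that the relevant interval should reach to $R_0+1$ since $|X_{s_0}|$ may exceed $R_0$.
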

\begin{proof}
\eqref{item:ell-2d-1} follows from Lemma~\ref{lem:eta}, and \eqref{item:ell-2d-2} follows from definition. To see \eqref{item:ell-2d-3}, note that $\ell_1-\ell_2=E_\omega^x[f(|X_{s_0}|)-f(|x|)]$, where
$f(r)=R_0^{d-2-2\theta}\tfbr(r)-\tfer(r)$. Since $R_0\ge K$, by taking $K$ sufficiently large and by Lemma~\ref{lem:tfab}\eqref{item:tfab_der}, we have
\begin{align*}
f'(r)
&\ge -(d-0.5) R_0^{d-2-2\theta}r^{1-d}+2\theta(1+\tfrac{1}{r^2})^{-\theta-1} r^{d-2-2\theta}\\
&\stackrel{\eqref{put:theta}}{\ge }
(d-0.5)(r^{d-2-2\theta}-R_0^{d-2-2\theta})\ge 0
\end{align*}
and so $f(r)$ is decreasing for $r\in[R_0/2, R_0]$. Item \eqref{item:ell-2d-3} is proved. 

Next, we will show \eqref{item:ell-2d-4}. Indeed, we can write, for $y\neq 0$,
\[
\ell_2(y)-\ell_3(y)=R_0^{d-2-2\theta}a(|y|)+A(R_0,R),
\] 
where $A(R_0,R)$ is a constant, and $a(r)=\tfbr(r)-\gamma^{-2}C_{\gamma,R}R^{2-d}e^{-\gamma r^2/R^2}$. By Lemma~\ref{lem:tfab}\eqref{item:tfab_der},
\begin{align*}
a'(r)&\le -cr^{1-d}+C\gamma^{-1}C_\gamma R^{-d}e^{-\gamma r^2/R^2}\\
&\le 
cr^{1-d}\left(-1+C\gamma^{-1}(\tfrac{r}{R})^d\right)<0
\end{align*}
for $r\in[0.5R-2, R]$ if $\gamma$ is chosen to be sufficiently large.
Hence, $\ell_2-\ell_3$ is radially decreasing in $\B_R\setminus\B_{0.5R-2}$. Item \eqref{item:ell-2d-4} then follows from the fact that $\ell_2=\ell_3$ on $\partial\B_{R/2}$. Item \eqref{item:ell-2d-5} is a consequence of \eqref{eq:L-e^x2/R2} in Lemma~\ref{lem:test-exponential}.
\end{proof}

\begin{proof}
[Proof of the lower bound in Theorem~\ref{thm:Green-bound}:]
It suffices to show that, for $x\in B_R$, 
\begin{equation}\label{eq:modified-lb-2d}
G_R^\omega(x)\gtrsim
R_0^{d-2-2\theta}(U(|x|+1)-U(R+1)).
\end{equation}
Recall $U(r)$ in \eqref{eq:def_ur}.
Indeed, \eqref{eq:modified-lb-2d} is equivalent to the lower bound of Theorem~\ref{thm:Green-bound} when $x\in B_{R-1}$. When $x\in B_R\setminus B_{R-1}$, $R\ge 2$, taking $y\in B_R$ with $|y|\le |x|-1$ and $|x-y|_1\le 2d$, by the assumption (A2), inequality \eqref{eq:modified-lb-2d} yields
\begin{align*}
G_R(x)
&\ge P_x^\omega(X_{|x-y|_1}=y)G_R(y)\\
&\gtrsim 
R_0^{d-2-2\theta}(U(|x|)-U(R+1))\\
&\gtrsim
R_0^{d-2-2\theta}(U(|x|+1)-U(R+2)).
\end{align*}

Our proof of \eqref{eq:modified-lb-2d} consists of several steps.
Let $C_0$ be as in Lemma~\ref{lem:eta} and recall $\tau=\tau_R$ in \eqref{def:tau}.

When $R\in(1, 2C_0\theta^2)$, by \eqref{eq:L-e^x2}, taking $A=A(\kappa)\ge 1$ sufficiently large, 
\[
L_\omega[G_R-(e^{-A|x|^2}-e^{-AR^2})]\le 0 \quad \text{ in }B_R.
\]
Since $G_R=0\ge (e^{-A|x|^2}-e^{-AR^2})$ on $\partial B_R$, by the maximum principle, we have $G_R\ge e^{-A|x|^2}-e^{-AR^2}$ in $B_R$. Thus, using the inequality $e^a\ge 1+a$ for $a\ge 0$, we get, for $x\in B_R$, 
(Note that $R\asymp 1$ in this case.)
\begin{align*}
G_R\ge e^{-AR^2}\left(e^{A(R^2-|x|^2)}-1\right)
\ge e^{-AR^2}(R^2-|x|^2)
\gtrsim R-|x|
\end{align*}	
By the fact $a\ge\log(1+a), a\ge 0$, we have $R-|x|\gtrsim \log\frac{R+1}{|x|+1}$. Moreover, since $R\asymp 1$, for $d\ge 3$, we also have $R-|x|\gtrsim (|x|+1)^{2-d}-(R+1)^{2-d}$. Thus \eqref{eq:modified-lb-2d} holds for this case.

 When $R\ge 2C_0\theta^2$, by 
assumption (A2), for $x\in B_R$ and any $y\in B_{C_0\theta^2}$,
\begin{align*}
G_R(x)
&\ge \sum_{i=0}^\infty P^x_\omega(X_i=y, i<\tau_R)P_\omega^y(X_{|y|_1}=0)\\
&\ge G_R(x,y)\kappa^{|y|_1}\gtrsim G_R(x,y),
\end{align*}	
and so (Recall $G_R(\cdot,\cdot)$ in Definition~\ref{def:green}.)
\begin{equation}\label{eq:compare-GR-H}
G_R(x)\gtrsim G_R(x,B_{C_0\theta^2})=:H_R(x), \quad x\in  B_R.
\end{equation}
Thus it suffices to obtain the corresponding lower bound for $H_R$ defined above.

When $R\in(2C_0\theta^2,4R_0]$, since (by Lemma~\ref{lem:eta}) $L_\omega(H_R-\tfe)\le 0$ in $B_R$, by the maximum principle, we have  $H_R\ge \tfe-\tfer(R)$ in $B_R$. Notice that
\begin{equation}
\label{eq:20200206-3}
\tfer(r_1)-\tfer(r_2)
\gtrsim
R_0^{d-2-2\theta}(U(r_1+1)-U(r_2+1)),\quad \forall r_1<r_2\le 4R_0.
\end{equation}
Indeed, for $d=2$, 
\begin{align*}
\tfer(r_1)-\tfer(r_2)
&\ge 
\left[\left(\tfrac{1+r_1^2}{1+r_2^2}\right)^{-\theta}-1\right](1+r_2^2)^{-\theta}\\
&\ge 
CR_0^{-2\theta}\log \tfrac{1+r_2^2}{1+r_1^2}
\ge 
CR_0^{-2\theta}\log \tfrac{1+r_2}{1+r_1},
\end{align*}
where we used the fact $a\ge \log(1+a)$ for $a\ge 0$ in the second inequality.
For $d\ge 3$, recalling that $\theta\ge d/2$ in \eqref{put:theta},
\begin{align*}
\tfer(r_1)-\tfer(r_2)&=(1+r_1^2)^{-\theta}-(1+r_2^2)^{-\theta}\\
&\ge (1+r_2^2)^{0.5d-1-\theta}[(1+r_1^2)^{1-0.5d}-(1+r_2^2)^{1-0.5d}]\\
&\gtrsim
R_0^{d-2-2\theta}[(1+r_1)^{2-d}-(1+r_2)^{2-d}]\
\end{align*}
Hence, we obtain $H_R\gtrsim R_0^{d-2-2\theta}(U(|x|+1)-U(R+1))$ for this case.

It remains to consider the case $R\ge 4R_0$. To this end, we will prove 
\begin{equation}
\label{eq:HR-ell}
(G_R\by{\eqref{eq:compare-GR-H}}{\gtrsim})H_R\ge \ell \quad \text{ in } \bar B_R.
\end{equation}
Assume by contradiction that \eqref{eq:HR-ell} fails, i.e., $\max_{\bar B_R}(\ell-H_R)>0$. 
By Lemma~\ref{lem:ell_properties}\eqref{item:ell-2d-1}, $\max_{\bar B_R}(\ell-H_R)$ is achieved outside of $B_{R_0}$. Further, note that $(\ell-H_R)|_{\partial B_R}\le 0$. By Lemma~\ref{lem:ell_properties} \eqref{item:ell-2d-5}, \eqref{item:ell-2d-4}, and the maximum principle, 
\[
\max_{\bar B_R\setminus B_{R/2}}(\ell-H_R)
\le 
\max_{\partial(B_R\setminus B_{R/2})}(\ell_3-H_R)
\le 
0\vee\max_{B_{R/2}\setminus B_{R_0}}(\ell-H_R).
\]
Hence, if $\max_{\bar B_R}(\ell-H_R)>0$, then there exists $x_0\in B_{R/2}\setminus B_{R_0}$ such that
\[
(\ell-H_R)(x_0)=\max_{\bar B_R}(\ell-H_R).
\]
Since $x_0\in B_{R/2}\setminus B_{R_0}$, by Lemma~\ref{lem:ell_properties}\eqref{item:ell-2d-3}, \eqref{item:ell-2d-4}, 
\[
(\ell_2-H_R)(x_0)\ge \max_{\bar B_{|x_0|/2}(x_0)}(\ell_2-H_R),
\]
which is equivalent to 
\[
(R_0^{d-2-2\theta}\tfb-H_R)(x_0)
\ge 
\max_{\bar B_{|x_0|/2}(x_0)}(R_0^{d-2-2\theta}\tfb-H_R).
\]
Without loss of generality, assume $\bar a=I$, and set, for $y\in\R^d\setminus\{0\}$,
\[
\tilde{\tfb}(y):=\tfb(y)-c|x_0|^{-(2+\delta)}\abs{\tfb(x_0)}|y-x_0|^2,
\]
where $c>0$ is chosen so that (Lemma~\ref{lem:tfab}\eqref{item:tfab_laplace}) $-\Delta\tilde{\tfb}\le 0$ for $|y|\ge C$. Then
\begin{equation}
\label{eq:20200206-1}
(R_0^{d-2-2\theta}\tilde\tfb-H_R)(x_0)
\ge 
\max_{\partial B_{|x_0|/2}(x_0)}(R_0^{d-2-2\theta}\tilde\tfb-H_R)+cR_0^{d-2-2\theta}|x_0|^{-\delta}\abs{\tfb(x_0)}.
\end{equation}

Next, let $g(x)=\tilde{\tfb}(x_0+\tfrac{|x_0|}{2}x)$, 
and let $v$ be the solution of 
\[
\left\{
\begin{array}{lr}
L_\omega v(x)=0 & x\in B_{|x_0|/2}(x_0)\\
v(x)=R_0^{d-2-2\theta}g(\tfrac{x-x_0}{|x-x_0|}) & x\in\partial B_{|x_0|/2}(x_0).
\end{array}
\right.
\]
Note that $v$ is close to $R_0^{d-2-2\theta}\tilde\tfb$ on $\partial B_{|x_0|/2}(x_0)$ in the sense that $\Abs{g(\tfrac{x-x_0}{|x-x_0|})-\tilde{\tfb}(x)}\le C|x_0|^{-1}\abs{\tfb(x_0)}$ for $x\in\partial B_{|x_0|/2}(x_0)$. 
Comparing the $L_\omega$-harmonic functions $v$ and $H_R$ in $\bar B_{|x_0|/2}(x_0)$ via the maximum principle, we have, for $x\in B_{|x_0|/2}(x_0)$,
\begin{equation}
\label{eq:ell-G-upper}
H_R(x)+\max_{\partial B_{|x_0|/2}(x_0)}(R_0^{d-2-2\theta}\tilde \tfb-H_R)\ge v(x)-cR_0^{d-2-2\theta}|x_0|^{-1}\abs{\tfb(x_0)}.
\end{equation}

By Proposition~\ref{thm:quant-homo}, for $x\in B_{|x_0|/2}(x_0)$,
\begin{equation}\label{eq:v-ell-upper}
v(x)
\ge 
\bar v(\tfrac{x-x_0}{|x_0|/2})-CR_0^{d-2-2\theta}|x_0|^{-\beta}[g]_{C^{0,1}(\partial\B_1)}\ge \bar v(\tfrac{x-x_0}{|x_0|/2})-CR_0^{d-2-2\theta}|x_0|^{-\beta}\abs{\tfb(x_0)},
\end{equation}
where $\bar v$ solves
\[
\left\{
\begin{array}{lr}
\tr(\bar aD^2\bar v)=\Delta \bar v=0 & x\in \B_1\\
\bar v(x)=R_0^{d-2-2\theta}g(x) & x\in\partial\B_1.
\end{array}
\right.
\]
Furthermore, using the fact that $\Delta g(x)=|x_0|^2\Delta\tilde \tfb(x_0+\tfrac{|x_0|}{2}x)\ge 0$ for $x\in\B_1$, we get $\bar v\ge R_0^{d-2-2\theta}g$ in $\B_1$. Therefore, by \eqref{eq:ell-G-upper}, \eqref{eq:v-ell-upper}, for $x\in B_{|x_0|/2}(x_0)$,
\[
R_0^{d-2-2\theta}g(\tfrac{x-x_0}{|x_0|/2})-H_R(x)
\le 
\max_{\partial B_{|x_0|/2}(x_0)}(R_0^{d-2-2\theta}\tilde\tfb-H_R)+CR_0^{d-2-2\theta}|x_0|^{-\beta}\abs{\tfb(x_0)}.
\]
Noting that $g(\tfrac{x-x_0}{|x_0|/2})=\tilde \tfb(x)$, the above inequality contradicts \eqref{eq:20200206-1} since $|x_0|> R_0\ge K$, where $K$ is chosen to be sufficiently large.
Inequality \eqref{eq:HR-ell} is proved.

When $x\in B_{R/2}\setminus B_{R_0}$, we have $G_R\by{\eqref{eq:HR-ell}}\gtrsim \ell_2$, and, 
by Lemma~\ref{lem:tfab}\eqref{item:tfab_der},
\begin{align}
\label{eq:20200206-2}
\ell_2&\ge
R_0^{d-2-2\theta}[\tfbr(|x|)-\tfbr(R)]\nn\\
&\gtrsim
R_0^{d-2-2\theta}
\int_{|x|}^R r^{1-d}\dd r \nn\\&
\gtrsim
R_0^{d-2-2\theta}(U(|x|+1)-U(R+1)).
\end{align}
When $x\in B_{R_0}$,
\begin{align*}
G_R&\by{\eqref{eq:HR-ell}}\gtrsim \ell_1= E_\omega^x[\ell_2(X_{s_0})+\eta(x)-\eta(X_{s_0})]\\
&\stackrel{\eqref{eq:20200206-2},\eqref{eq:20200206-3}}{\gtrsim }
R_0^{d-2-2\theta}E_\omega^x\left[
U(|X_{s_0}|+1)-U(R+1)+U(|x|+1)-U(|X_{s_0}|+1)
\right]\\
&=
R_0^{d-2-2\theta}[U(|x|+1)-U(R+1)].
\end{align*}	
Finally, when $x\in B_R\setminus B_{R/2}$, using the inequality  $e^a\ge 1+a$ for $a\ge 0$,
\begin{align*}
G_R\by{\eqref{eq:HR-ell}}\gtrsim 
\ell_3
&\gtrsim  R_0^{d-2-2\theta}R^{2-d}\left(e^{\gamma(1-|x|^2/R^2)}-1\right)\\
&\gtrsim R_0^{d-2-2\theta}R^{2-d}(1-\tfrac{|x|}{R}).
\end{align*}	
For $d=2$, note that
$1-\tfrac{|x|}{R}\asymp \tfrac{R}{|x|}-1\ge \log\frac{R}{|x|}$.
For $d=3$, clearly 
\[
R^{2-d}(1-\tfrac{|x|}{R})\gtrsim |x|^{2-d}(1-(\tfrac{|x|}{R})^{d-2}).
\]
Our proof is complete.
\end{proof}

%

\begin{lemma}\label{lem:potential-lim-bd}
Assume {\rm(A1), (A2)}. When $d=2$, then for $\mb P$-almost all $\omega$,
\[
c\rho(\omega)\le \varliminf_{|x|\to\infty}\frac{A^\omega(x)}{\log|x|}
\le \varlimsup_{|x|\to\infty}\frac{A^\omega(x)}{\log|x|}
\le C\rho(\omega).
\]
\end{lemma}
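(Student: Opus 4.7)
The plan is to convert $A^\omega(x)$ into a Green-function quantity on a ball of radius comparable to $|x|$, use Harnack to control the boundary averages, and thereby reduce the problem to estimating $G_R^\omega(0,0)$. The latter I would then handle via the heat-kernel bounds of Theorem~\ref{thm:recall-rho-estimates} combined with the multidimensional Birkhoff theorem applied to the stationary field $\rho_\omega(\cdot)$.

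Since $A^\omega+G_R^\omega(\cdot,0)$ is $L_\omega$-harmonic in $B_R$ with boundary value $A^\omega|_{\partial B_R}$, we have the identity
\[
A^\omega(x)+G_R^\omega(x,0)=E_\omega^x[A^\omega(X_{\tau_R})],\qquad x\in\bar B_R,
\]
and in particular $E_\omega^0[A^\omega(X_{\tau_R})]=G_R^\omega(0,0)$. Apply this with $R=2|x|$: Theorem~\ref{thm:Green-bound} gives $G_R^\omega(x,0)\lesssim \tx\log\frac{R+2}{|x|+1}=O(\tx)$, so the Green-function term is an $\omega$-finite additive constant. Because $A^\omega$ is $L_\omega$-harmonic on $\Z^2\setminus\{0\}$, the elliptic Harnack inequality for balanced non-divergence difference operators on the annulus $B_{3R/2}\setminus B_{R/2}$ yields $\max_{\partial B_R}A^\omega\le C_0\min_{\partial B_R}A^\omega$ for some deterministic $C_0=C_0(\kappa)$. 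As both $E_\omega^x[A^\omega(X_{\tau_R})]$ and $G_R^\omega(0,0)=E_\omega^0[A^\omega(X_{\tau_R})]$ are convex combinations of values of $A^\omega$ on $\partial B_R$, they differ by at most the factor $C_0$, so
\[
\tfrac{1}{C_0}\,G_R^\omega(0,0)-O(\tx)\;\le\;A^\omega(x)\;\le\;C_0\,G_R^\omega(0,0).
\]

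It therefore suffices to establish $G_R^\omega(0,0)\asymp\rho(\omega)\log R$ with deterministic constants as $R\to\infty$. Decompose $G_R^\omega(0,0)=\sum_{n=0}^{\infty}P_\omega^0(X_n=0,\,n<\tau_R)$ into a bulk part ($n\le c_0R^2$) and a tail ($n>c_0R^2$). A submartingale argument applied to $|X_n|^2-cn$ (using that the balanced walk has zero drift) gives the exit bound $P_\omega^0(\tau_R\le n)\le Cn/R^2$, so on the bulk range $P_\omega^0(X_n=0,\,n<\tau_R)\asymp p_n^\omega(0,0)$, which by Theorem~\ref{thm:recall-rho-estimates}(iii) is $\asymp \rho(\omega)/\rho_\omega(B_{\sqrt n})$. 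Since $\rho=\mathrm{d}\mb Q/\mathrm{d}\mb P$ forces $E_\mb P[\rho]=1$, and $\mb P$ is iid (hence ergodic under $\Z^d$-translations), the $d$-dimensional Birkhoff ergodic theorem gives $\rho_\omega(B_r)/|B_r|\to 1$ $\mb P$-a.s., so for $d=2$ there is an $\omega$-finite threshold $T_\omega$ with $\rho_\omega(B_{\sqrt n})\asymp n$ for all $n\ge T_\omega$. Summing,
\[
\sum_{n=1}^{c_0R^2}\frac{\rho(\omega)}{\rho_\omega(B_{\sqrt n})}\;\asymp\;\rho(\omega)\int_{T_\omega}^{c_0R^2}\frac{\mathrm{d}t}{t}\;\asymp\;\rho(\omega)\log R,
\]
with the small-$n$ segment ($n<T_\omega$) contributing only an $\omega$-finite additive constant. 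Dividing by $\log|x|=\log R-O(1)$ and taking $\liminf$ and $\limsup$ gives the lemma.

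The main obstacle is the tail bound on $G_R^\omega(0,0)$: one must show that the contribution from $n\gg R^2$ is genuinely $O(\rho(\omega))$, reflecting the $\Theta(1/R^2)$-scale spectral gap of $L_\omega$ on $B_R$ with Dirichlet conditions. This requires exponential-type decay of the Dirichlet heat kernel $p_n^{\omega,R}(0,0)$ for large $n$, which follows from exit-time estimates complementary to the martingale bound above but needs some care to keep the constants deterministic (depending only on $d,\kappa$) so that they survive the normalization by $\log|x|$.
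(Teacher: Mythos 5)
Your reduction to the Green function via the harmonic identity $A^\omega(x)+G_R^\omega(x,0)=E_\omega^x[A^\omega(X_{\tau_R})]$, the Harnack chaining on the annulus, and the bound $G_R^\omega(x,0)=O(\tx)$ with $R=2|x|$ from Theorem~\ref{thm:Green-bound} are all correct, and they cleanly reduce the lemma to the asymptotic $G_R^\omega(0,0)\asymp\rho(\omega)\log R$. The paper states this lemma without a proof, so there is no paper argument to compare against; your route is a plausible one. (An alternative that avoids the Dirichlet Green function entirely is to split $A^\omega(x)=\sum_{n<|x|^2}[p_n^\omega(0,0)-p_n^\omega(x,0)]+\sum_{n\ge|x|^2}[\cdots]$, handle the first sum by Theorem~\ref{thm:recall-rho-estimates}(iii) plus Birkhoff exactly as you do, and control the second by the parabolic H\"older estimate of Corollary~\ref{acor:Kry-Saf}, which gives $|p_n^\omega(0,0)-p_n^\omega(x,0)|\lesssim(|x|/\sqrt n)^\gamma\rho(\omega)/n$ and hence a summable tail.)

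However, the estimate of $G_R^\omega(0,0)$ itself has two gaps as written. For the bulk, the implication ``$P_\omega^0(\tau_R\le n)\le Cn/R^2$, so on $n\le c_0R^2$ we have $P_\omega^0(X_n=0,n<\tau_R)\asymp p_n^\omega(0,0)$'' does not hold: for $n$ of order $c_0R^2$ the exit bound is $O(1)$, while $p_n^\omega(0,0)\asymp\rho(\omega)/n\asymp\rho(\omega)/R^2$, so subtracting $Cn/R^2$ from $p_n^\omega(0,0)$ does not preserve comparability. For the lower bound this is easily patched by restricting the sum to $n\lesssim R\sqrt{\rho(\omega)}$ (which still yields $\rho(\omega)\log R+O_\omega(1)$), but the claimed ``$\asymp$'' on the full bulk window really requires the Gaussian factor from the heat-kernel bound via the strong Markov property at $\tau_R$, not the crude exit estimate.

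The tail is a genuine gap. You assert the contribution for $n>c_0R^2$ is $O(\rho(\omega))$ and that this ``follows from exit-time estimates'' giving exponential decay of $p_n^{\omega,R}(0,0)$. But exit-time estimates by themselves give $p_n^{\omega,R}(0,0)\le P_\omega^0(\tau_R>n)\le e^{-cn/R^2}$, and $\sum_{n>c_0R^2}e^{-cn/R^2}\asymp R^2 e^{-cc_0}$, which is off by a factor $R^2/\rho(\omega)$ from what you need. One must combine the exit-time estimate with the on-diagonal heat-kernel upper bound via Chapman--Kolmogorov: for $n>R^2$,
\[
p_n^{\omega,R}(0,0)\le\sum_y p_{n-R^2}^{\omega,R}(0,y)\,p_{R^2}^\omega(y,0)\le P_\omega^0(\tau_R>n-R^2)\cdot\max_y p_{R^2}^\omega(y,0)\lesssim e^{-c(n-R^2)/R^2}\cdot\frac{\rho(\omega)}{R^2},
\]
using Theorem~\ref{thm:recall-rho-estimates}(iii) and Birkhoff to bound $\max_y p_{R^2}^\omega(y,0)\lesssim\rho(\omega)/\rho_\omega(B_R)\asymp\rho(\omega)/R^2$. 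Summing over $n>R^2$ then gives $O(\rho(\omega))$. Without this combination, the tail bound you describe fails, and this step is the one you need to fill in.
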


\section{Heat kernel bounds and consequences}

\subsection{Integrability of $\rho$ and the heat kernel bounds}

Using bounds of the Green functions,  we will obtain the exponential integrability (under $\mb P$) of the Radon-Nikodym derivative $\rho(\omega)$ (defined in \eqref{eq:def-rho})
and the heat kernel of the RWRE.

The goal of this section is to prove Theorem~\ref{thm:hk-bounds}. Recall the continuous-time RWRE in Definition~\ref{def:rwre-continuous} and its transition kernel $p_t(x,y)$.  We remark that for the time continuous random walk $(Y_t)$, setting 
\begin{equation}\label{eq:def-tau-y}
\tau^Y=\tau_R^Y:=\inf\{t\ge 0: Y_t\notin B_R\},
\end{equation}
the corresponding Green functions of $(Y_t)$ can be defined similarly as
\[
\int_0^\infty p_t^\omega(x,S)\dd t,\, d\ge 3,
\quad \text{ and }\quad 
E_\omega^x\left[\int_0^{\tau^Y}\mathbbm{1}_{Y_t\in S}\dd t\right], \, d\ge 2,
\]
and they have the same values as $G(x,S)$ and $G_R(x)$, respectively.  Thus we do not need to distinguish notations in discrete and continuous time cases and use $G(x,S)$ and $G_R(x)$ to denote Green's functions in both settings.

\begin{corollary}\label{cor:green-time}
Assume {\rm(A1), (A2)} and $d=2$. 
For any $\error>0$, there exists a random variable $\tx=\tx(\omega,d,\kappa,\error)>0$ with 
$\mb E[\exp(c\tx^{d-\error})]<\infty$ such that  $\mb P$-almost surely, for $R>0$,
\[
\int_0^{R^2}p_t^\omega(x,0)\dd t
\lesssim
\tx
(1+\log\tfrac{R+1}{|x|+1}),
\quad \forall x\in B_R.
\]
\end{corollary}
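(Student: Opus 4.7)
The plan is to integrate the pointwise heat-kernel upper bound supplied by Theorem~\ref{thm:hk-bounds}\eqref{item:hke-expmmt}: specialised to $d=2$, it reads
\[
p_t^\omega(x,0)\le C\tx\,(1+t)^{-1}\exp\!\left(-c\,\mf h(|x|,t)\right),\qquad x\in\Z^2,\ t>0.
\]
With the random prefactor $C\tx$ pulled out of the integral, Corollary~\ref{cor:green-time} reduces to the purely deterministic estimate
\[
J(r,R):=\int_0^{R^2}(1+t)^{-1}\exp\!\left(-c\,\mf h(r,t)\right)\dd t \;\lesssim\; 1+\log\tfrac{R+1}{r+1},\qquad 0\le r<R.
\]

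I would prove this by an elementary three-piece decomposition of $[0,R^2]$ based on the explicit formula \eqref{eq:def-mf-h} for $\mf h$. Assume first that $r\ge 1$ and split $[0,R^2]=[0,r]\cup[r,r^2]\cup[r^2,R^2]$. On $[0,r]$ one has $\mf h(r,t)\ge r$, so this piece contributes at most $e^{-cr}\log(1+r)=O(1)$. On $[r,r^2]$ one has $\mf h(r,t)=r^2/t$; substituting $u=cr^2/t$ and using $(1+t)^{-1}\,\dd t\le \dd t/t=-\dd u/u$ bounds the piece by $\int_c^\infty e^{-u}u^{-1}\,\dd u=O(1)$ (the constant $c$ inherited from Theorem~\ref{thm:hk-bounds} is fixed and strictly positive). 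On $[r^2,R^2]$ one has $\mf h(r,t)\le 1$, so the exponential factor is $\asymp 1$ and this piece equals $\log\tfrac{1+R^2}{1+r^2}\asymp\log\tfrac{R+1}{r+1}$. Summing the three contributions gives the claim. The residual case $r<1$ forces $x=0$, in which case $\mf h(0,t)\equiv 0$ and $J(0,R)=\log(1+R^2)\asymp \log\tfrac{R+1}{1}$, matching the stated bound.

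The substantive input is Theorem~\ref{thm:hk-bounds}\eqref{item:hke-expmmt}, which already packages both the correct deterministic decay $(1+t)^{-1}$ and the sharp sub-Gaussian exponent $\mf h$; the rest is bookkeeping, and the only point one must be mindful of is the requirement $r\ge 1$ needed for the middle piece $[r,r^2]$ to be non-empty. I would not try to deduce the corollary from the Green-function bound of Theorem~\ref{thm:Green-bound} alone, because the time cut-off $R^2$ in the integral is not directly tied to an exit time from a ball, and in the recurrent case $d=2$ the whole-space Green function is infinite; hence one genuinely needs the pointwise on-diagonal heat-kernel bound, integrated against the Gaussian/polynomial profile in $t$.
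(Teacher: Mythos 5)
Your argument is circular. In the paper's logical order, Corollary~\ref{cor:green-time} is used \emph{inside} the proof of Theorem~\ref{thm:hk-bounds}: the inequality \eqref{eq:220922} there,
\[
|x_0|^{2-d}\tx^{d-1}\gtrsim\int_{r^2/2}^{r^2}p^\omega_t(x_0,0)\,\dd t\gtrsim r^2\frac{\rho_\omega(0)}{\rho_\omega(B_r)},
\]
gets its upper bound for $d=2$ precisely from Corollary~\ref{cor:green-time}, and that inequality is what produces the exponentially integrable prefactor $\tx^{d-1}$ in Theorem~\ref{thm:hk-bounds}\eqref{item:hke-expmmt} in the first place. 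The pointwise heat-kernel bound that \emph{is} available before this corollary is Theorem~\ref{thm:recall-rho-estimates}\eqref{item:recall-hke}, whose prefactor $\rho_\omega(0)/\rho_\omega(B_{\sqrt t})$ depends on $t$ and has only polynomial moments, which is not enough to extract a single $\tx$ with $\mb E[\exp(c\tx^{d-\vep})]<\infty$. Your deterministic three-piece computation of $J(r,R)$ is arithmetically fine, but it rests on a premise that has not yet been established at the point where this corollary sits.

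You also dismissed the Green-function route, yet that is exactly what the paper does, and your two objections have direct answers. The mismatch between the temporal cut-off $R^2$ and a spatial exit time is handled by decomposing time at the successive exit times $T_k$ from the nested balls $B_{R_k}$ with $R_k=2^{k-1}R$: by the strong Markov property,
\[
E_\omega^x\Bigl[\int_0^{R^2}\mathbbm{1}_{Y_t=0}\,\dd t\Bigr]\le\sum_{k\ge 1}E_\omega^x\bigl[G_{R_k}^\omega(Y_{T_{k-1}})\mathbbm{1}_{\{T_{k-1}\le R^2\}}\bigr],
\]
and since $(Y_t)$ is a martingale, Hoeffding's inequality gives $P^x_\omega(T_{k-1}\le R^2)\lesssim\exp(-c\,4^{k-1})$, so the tail of the sum is summable. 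The recurrence of $d=2$ is immaterial because only \emph{finite-ball} Green functions $G_{R_k}$ appear, and Theorem~\ref{thm:Green-bound} controls all of them simultaneously with the same $\tx$ (indeed $\tx=R_0$ in its proof does not depend on $R$): the $k=1$ term gives $\lesssim\tx\log\tfrac{R+2}{|x|+1}$, while for $k\ge 2$ the starting point $Y_{T_{k-1}}\in\partial B_{R_{k-1}}$ gives $G_{R_k}^\omega(Y_{T_{k-1}})\lesssim\tx\log\tfrac{R_k+2}{R_{k-1}+1}\lesssim\tx$. Summing yields the stated bound with no appeal to Theorem~\ref{thm:hk-bounds}.
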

\begin{proof}
We only consider $R\ge 4$. Let $R_k:=2^{k-1}R$, and define recursively $T_0=0$,
\[
T_k:=\min\{t\ge T_{k-1}:Y_t\notin B_{R_k}\}, \quad k\in\N.
\]
Set $N_R=\max\{n\ge 0:T_n\le R^2\}$. Then, using the strong Markov property, a.s.,
\begin{align*}
E_\omega^x\left[\int_0^{R^2}\mathbbm{1}_{Y_t=0}\dd t\right]
&\le 
E_\omega^x\left[
\sum_{k=1}^\infty\int_{T_{k-1}}^{T_k}\mathbbm{1}_{\{Y_t=0,T_{k-1}\le R^2\}}\dd t
\right]\\
&\le 
\sum_{k=1}^\infty
E_\omega^x\left[
G_{R_k}^\omega(Y_{T_{k-1}})
\mathbbm{1}_{\{T_{k-1}\le R^2\}}\right]\\
&\lesssim 
\tx\left(G_R^\omega(x)+\sum_{k=2}^\infty P_\omega^x(T_{k-1}\le R^2)\right),
\end{align*}	
 where we used (by Theorem~\ref{thm:Green-bound}) the fact $G_{R_{k}}^\omega(Y_{T_{k-1}})\lesssim \tx$, $k\ge 2$, in the last inequality. Note that $(Y_t)$ is a martingale. By Hoeffding's inequality, for $k\ge 1$,
\begin{align*}
P_\omega^x(T_k\le R^2)
\le 
P_\omega^x \left(\sup_{t\le R^2}|Y_t|\ge R_k\right)\le 
C e^{-cR_k^2/R^2}\le C\exp(-c4^k).
\end{align*}	
The conclusion follows.
\end{proof}

\begin{proof}
[Proof of Theorem~\ref{thm:hk-bounds}:]
First, we will show the upper bounds in \eqref{item:rho-bounds} and \eqref{item:rho-hke-expmmt}. To this end, for $r\ge 1$, we take $x_0\in \partial B_r$. 
We claim that
\begin{equation}\label{eq:220922}
|x_0|^{2-d}\tx^{d-1}
\gtrsim 
\int_{r^2/2}^{r^2}
p^\omega_t(x_0,0)\dd t
\gtrsim r^2\frac{\rho_\omega(0)}{\rho_\omega(B_r)}. 
\end{equation}
Indeed,  the lower bound of \eqref{eq:220922} follows from integrating the lower bound of \eqref{eq:hke-random}.  For $d=2$,  the upper bound in \eqref{eq:220922} is a consequence of Corollary~\ref{cor:green-time}.  When $d\ge 3$, the upper bound of \eqref{eq:220922} follows from Corollary~\ref{cor:a_and_g}.
Note that $|x_0|\asymp r$. The upper bound in \eqref{item:rho-hke-expmmt} is proved. The upper bound in \eqref{item:rho-bounds} then follows from taking $r\to\infty$ and the ergodic theorem.

To obtain the lower bound in \eqref{item:rho-hke-expmmt}, for $r\ge 5$. Recall $\tau_r$ in \eqref{def:tau} and $G_r(\cdot,\cdot)$ in Definition~\ref{def:green}. 
For any fixed $y_0 \in \partial B_{r/2}$, the function $v(x)=G_r(y_0,x)/\rho_\omega(x)$ solves the {\it adjoint equation}
\begin{equation}\label{eq:adj-eq}
L_\omega^* v(x):=\sum_{y}\omega^*(x,y)[v(y)-v(x)]=0, \quad x\in B_{r/2},
\end{equation}
where 
\begin{equation}\label{eq:def-adjoint}
\omega^*(x,y):=\rho_\omega(y)\omega(y,x)/\rho_\omega(x).
\end{equation}
Here, we used the facts that $\sum_y \rho_\omega(y) \omega(y,x)=\rho_\omega(x)$ and $\sum_y G_r(y_0,y) \omega(y,x) = G_r(y_0,x)$.
 By the Harnack inequality for the adjoint operator \cite[Theorem 6]{DG-19}, we have $v(0)\asymp  v(x)$ for all $x\in B_{r/4}$. Hence
\begin{equation}\label{eq:G-ball-rho-compare}
G_r(y_0, 0)\frac{\rho_\omega(B_{r/4})}{\rho_\omega(0)}
\asymp
G_r(y_0, B_{r/4}).
\end{equation}
Moreover,  since $(|X_n|^2-n)$ is a martingale under $P_\omega$, by the optional stopping lemma we get $E_\omega^{y_0}[|X_{\tau_r}|^2-\tau_r]=|y_0|^2\ge 0$, and so
\[
G_r(y_0, B_{r/4})\le E_\omega^{y_0}[\tau_r]\le E_\omega^{y_0}[|X_{\tau_r}|^2]\le Cr^2.
\]
The above inequality, together with \eqref{eq:G-ball-rho-compare} and  Theorem~\ref{thm:Green-bound}, yields
\begin{align*}
\frac{\rho_\omega(0)}{\rho_\omega(B_{r/4})}
\gtrsim
\frac{G_r(y_0, 0)}{G_r(y_0, B_{r/4})}
\gtrsim 
\frac{\tx^{-s}r^{2-d}}{r^2}
\gtrsim
\tx^{-s}r^{-d}.
\end{align*}
The lower bound in Theorem~\ref{thm:hk-bounds}\eqref{item:rho-hke-expmmt} follows. Letting $r\to\infty$, we also get the lower bound in \eqref{item:rho-bounds}.
\end{proof}


\subsection{Optimal semigroup decay for $d\ge 3$}

The Efron-Stein inequality \eqref{eq:bblm} of Boucheron, Bousquet, and Massart \cite{BBLM-05} will be used in our derivation of the variance decay for the semi-group.

Let $\omega'(x), x\in\Z^d$, be independent copies of $\omega(x), x\in\Z^d$. For any $y\in\Z^d$, let $\omega'_y\in\Omega$ be the environment such that
\[
\omega'_y(x)=
\left\{
\begin{array}{lr}
&\omega(x) \quad \text{ if }x\neq y,\\
&\omega'(y) \quad \text{ if }x=y.
\end{array}
\right.
\]
That is, $\omega'_y$ is a modification of $\omega$ only at location $y$. 
For any measurable function $Z$ of the environment $\omega$, we write, for $y\in\Z^d$,
\begin{equation}\label{eq:def_vert_der}
Z_y'=Z(\omega_y'), \quad
\vd{y} Z(\omega)=Z'_y-Z, 
\end{equation}
and set
\begin{equation}\label{def:v}
V(Z)=\sum_{y\in\Z^d}(\vd{y}Z)^2. 
\end{equation}

By an $L_p$ version of Efron-Stein inequality \cite[Theorem~3]{BBLM-05}, for $q\ge 2$,
\begin{equation}\label{eq:bblm}
\mb E[|Z-\mb EZ|^q]\le Cq^{q/2}\mb E[V^{q/2}].
\end{equation}

Following the strategy of \cite{GNO-15}, our proof of the diffusive decay of the semi-group $\{P_t\}$ will make use of the Efron-Stein type inequality \eqref{eq:bblm} and the Duhamel representation formula \eqref{eq:vd_duhamel} for the vertical derivative. 
Let us reemphasize that, in the non-divergence form setting, there is no deterministic Gaussian bounds for the heat kernel, and the steady state $\Q$ of the environment process $(\evp t)_{t\ge 0}$ is not the same as the original measure $\mb P$.
To overcome these difficulties, we employ crucially the heat kernel estimates and the (negative and positive) moment bounds of the Radon-Nikodym derivative $\tfrac{\dd \mb P}{\dd\Q}$ in Theorem~\ref{thm:hk-bounds}.

For any $\zeta\in L^1(\Omega)$, we write
\[
v(t):=P_t\zeta(\omega).
\]
Then, its stationary extension $\bar v(t,x)$ solves the parabolic equation
\begin{equation}\label{eq:heat_eq_v}
\left\{
\begin{array}{lr}
\partial_t\bar v(t,x)-L_\omega\bar v(t,x)=g(t,x) & t\ge 0, x\in\Z^d,\\
v(0,x)=g_0(x) &x\in\Z^d,
\end{array}
\right.
\end{equation}
with $g(t,x)=0$ and $g_0(x)=\bar \zeta(x;\omega)$. 
In general, the solution of \eqref{eq:heat_eq_v} can be represented by Duhamel's formula
\begin{equation}\label{eq:duhamel}
\bar{v}(t,x)=\sum_z p_t^\omega(x,z)g_0(z)+\int_0^t p^\omega_{t-s}(x,z)g(s,z)\dd s.
\end{equation}

To apply \eqref{eq:bblm}, recall notations $Z_y'$ and $\vd y Z$ in \eqref{eq:def_vert_der}. By enlarging the probability space, we still use $\mb P$ to denote the joint law of $(\omega,\omega')$. 
For $y\in\Z^d$, applying $\vd y$ to \eqref{eq:heat_eq_v}, we get that $\vd y\bar v$ satisfies
\[
\left\{
\begin{array}{lr}
\partial_t(\vd y\bar v)(t,x)-L_\omega(\vd y v)(t,x)=(\vd y \omega_i(x))\nabla_i^2\bar v_y'(t,x) & t\ge 0, x\in\Z^d,\\
(\vd y\bar v)(0,x)=\vd y\bar \zeta(x), &x\in\Z^d.
\end{array}
\right.
\]
Here we used the convention of summation over repeated integer indices.
Hence, by formula \eqref{eq:duhamel}, $\vd y\bar v$ has the representation
\begin{align}\label{eq:vd_duhamel}
\vd y\bar v(t,x)
&=
\sum_z\left[ p_t^\omega(x,z)\vd y\bar\zeta(z)+\sum_{i=1}^d\int_0^t p^\omega_{t-s}(x,z)
(\vd y \omega_i(z))\nabla_i^2\bar v_y'(s,z)\dd s\right]\nn\\
&=
\sum_{z\in y+\supp(\zeta)} p_t^\omega(x,z)\vd y\bar\zeta(z)+\sum_{i=1}^d\int_0^t p^\omega_{t-s}(x,y)
(\vd y \omega_i(y))\nabla_i^2\bar v_y'(s,y)\dd s,
\end{align}
where in the last equality we used the fact $\vd y\omega_i(z)=0$ for $y\neq z$, and that $\vd y\bar\zeta(z)=0$ for $z\notin y+\supp(\zeta)$.

\begin{proof}
[Proof of Theorem~\ref{thm:var_decay}:]

Recall the notation $V(\cdot)$ in \eqref{def:v}.  We write 
\[
K(y,s)=K(y,s;\omega,\omega'):=\sum_{i=1}^d(\vd y \omega_i(y))\nabla_i^2\bar v_y'(s,y).
\]
We also write $\tx_z:=\tx(\theta_z\omega)$ and $S:=\supp(\zeta)$. 
Without loss of generality, assume $E_\Q\zeta=0$.
Using \eqref{eq:vd_duhamel}, for any $p>1$,
\begin{align}\label{eq:var_decay_2parts}
\norm{V(v(t))}_p
&=
\norm{\sum_y(\vd y\bar v(t,0))^2}_p\nn\\
&\lesssim
\norm{\sum_y
\left(
\sum_{z\in S} p_t^\omega(0,z+y)
\right)^2
}_p
+
\norm{
\sum_y\left(
\int_0^t p^\omega_{t-s}(0,y)K(y,s)\dd s
\right)^2
}_p\nn\\
&\lesssim
(\#S)^2  \norm{\sum_y p_t^\omega(0,y)^2}_{p}
+
\int_0^t
\norm{
\sum_y
p_{t-s}^\omega(0,y)^2K(y,s)^2
}_p^{1/2}
\dd s\nn\\
&=:(\#S)^2  \rn{I}+\rn{II}^2,
\end{align}	
where $\#S$ denote the cardinality of $S$, and in the last inequality we applied the Cauchy-Schwarz inequality and  Minkowski's integral inequality to the two norms respectively.
Then, by Theorem~\ref{thm:hk-bounds}\eqref{item:hke-expmmt}, 
\begin{align}\label{eq:partI_var_decay}
 \rn{I}
=
 \norm{\sum_y p_t^\omega(0,y)^2}_{p}
 &\lesssim (1+t)^{-d}\norm{\sum_z\tx_z^{2(d-1)} e^{-c\mf h(|z|,t)}}_{p}\nn\\
 &\lesssim
 (1+t)^{-d}\sum_z e^{-c\mf h(|z|,t)}\norm{\tx_z^{2(d-1)}}_{p}\nn\\
 &\lesssim
 (1+t)^{-d/2}\norm{\tx^{2(d-1)p}}_{1}^{1/p},
 \end{align}	 
 where in the last inequality we used the translation invariance of $\mb P$. 
 
 Further,  using Theorem~\ref{thm:hk-bounds}\eqref{item:hke-expmmt} again,
\begin{align}\label{eq:partII_var_decay}
\rn{II}
&\lesssim
\int_0^t
(1+t-s)^{-d/2}\left(
\sum_y e^{-c\mf h(|y|,t-s)}
\norm{\ms H_y^{2(d-1)}
K(y,s)^2}_p
\right)^{1/2}
\dd s\nn\\
&\lesssim
\int_0^t(1+t-s)^{-d/4}
\norm{
\ms H^{2(d-1)p}K(0,s)^2
}_1^{1/(2p)}\dd s
\end{align}	
where  in the last inequality we used the translation-invariance of $\mb P$ and the fact that $\norm{\bar v}_\infty\lesssim 1$.

Let $p'>1$ denote the H\"older conjugate of $p>1$. Since 
\begin{align}\label{eq:jun12020-2}
\norm{
\ms H^{2(d-1)p}K(0,s)^2
}_1
&\le 
\norm{\ms H^{2(d-1)p}}_{p'}\norm{K(0,s)^2}_p\nn\\
&=C_p \norm{K(0,s)^2}_p\nn\\
&\le
C_p 
\sum_{i=1}^d\norm{\nabla_i^2\bar v(s,0)}_{2p}^2
\nn\\
&\le
 C_p
 \sum_{e:|e|=1}\norm{|\bar v(s,e)-\bar v(s,0)|^{2/p}}_1^{1/p}\nn\\
&\by{\text{H\"older}}\le
C_p\norm{\rho_\omega^{-1/p}}_{p'}^{1/p}
\sum_{e:|e|=1}\norm{\rho_\omega |\bar v(s,e)-\bar v(s,0)|^2}_1^{1/p^2}
\end{align}	
where in the third inequality we used the fact $\norm{\bar v}_\infty\le 1$. Then, setting
\[
u(t):=\var_\Q(v(t)),
\]
by \eqref{eq:jun12020-2}, Theorem~\ref{thm:hk-bounds}\eqref{item:rho-hke-expmmt} and Lemma~\ref{lem:caccip}, we obtain
\begin{equation}
\label{eq:jun12020-1}
\norm{
\ms H^{2(d-1)p}K(0,s)^2
}_1\le 
C_p\sum_e E_\Q[(\bar v(s,e)-\bar v(s,0))^2]^{1/p^2}
\le 
C_p(-\frac{\dd}{\dd t}u)^{1/p^2}.
\end{equation}
This inequality, together with \eqref{eq:var_decay_2parts},\eqref{eq:partI_var_decay}, \eqref{eq:partII_var_decay}, implies
\begin{align*}
\norm{V(v(t))}_p^{1/2}
\lesssim_p
(1+t)^{-d/4}+\int_0^t(1+t-s)^{-d/4}(-\frac{\dd}{\dd t}u)^{1/(2p^3)}\dd s,
\end{align*}	
where $\lesssim_p$ means that the multiplicative constant depends on $(p,d,\kappa)$.

Furthermore, by H\"older's inequality and Theorem~\ref{thm:hk-bounds}\eqref{item:rho-bounds}, 
\[
u(t)\le E_\Q[(v(t)-\mb Ev(t))^2]
\le \norm{\rho_\omega}_{p'}\norm{(v(t)-\mb Ev(t))^2}_p
\lesssim_p
\norm{V(v(t))}_p
\]
where we applied \eqref{eq:bblm} in the last inequality.

Therefore, we conclude that, for any $p>1$, 
\[
u(t)^{1/2}\lesssim_p
(1+t)^{-d/4}+\int_0^t(1+t-s)^{-d/4}(-\tfrac{\dd}{\dd t}u(s))^{1/(2p^3)}\dd s.
\]
When $d\ge 3$, we can take $p>1$ sufficiently close to $1$ and apply   \cite[Lemma 3.5]{AN-17}) 
(Under the notation of \cite{AN-17}, we apply it to the case $\gamma=d/4$ and $\delta=1/p^3$.)
to obtain
\[
\var_\Q(v(t))=u(t)\lesssim (1+t)^{-d/2}.
\]
Thus, \eqref{eq:var_decay_q} is proved. By H\"older's inequality,
\[
\norm{v(t)}_1\le \norm{\rho_\omega^{-1}}_1^{1/2}\norm{\rho_\omega v(t)^2}_1^{1/2}\le Cu(t)^{1/2}\le C(1+t)^{-d/4}.
\]
Then, by the triangle inequality, we get $E_\Q[(v(t)-\mb Ev(t))^2]\lesssim (1+t)^{-d/2}$. By H\"older's inequality, for any $q>1$,
\[
\norm{\abs{v(t)-\mb Ev(t)}^{2/q}}_{1}
\le \norm{\rho_\omega^{-1/q}}_{q'}\norm{\rho^{1/q}\abs{v(t)-\mb Ev(t)}^{2/q}}_{q}
\lesssim_q
E_\Q[(v(t)-\mb Ev(t))^2]^{1/q}.
\]
Display \eqref{eq:var_decay_p} is proved.
\end{proof}

\begin{lemma}
\label{lem:caccip}
For any bounded measurable function $\zeta\in\R^{\Omega}$, 
\[
\frac{\dd}{\dd t}\var_{\Q}(v(t))\lesssim
-\sum_{e:|e|=1}E_\Q\left[(\bar v(t,e)-\bar v(t,0))^2\right],
\]
where $L_\omega$ only acts on the spatial variable.
\end{lemma}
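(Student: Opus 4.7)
The plan is to carry out a standard carré du champ computation for the semigroup $\{P_t\}$ against its invariant measure $\mb Q$. Since the stationary extension $\bar v(t,x;\omega) = P_t\zeta(\theta_x\omega)$ satisfies $\partial_t\bar v(t,x) = L_\omega\bar v(t,x)$ (equation \eqref{eq:heat_eq_v} with $g\equiv 0$), evaluating at $x=0$ gives $\partial_t v(t) = \mc L v(t)$, where $\mc L u(\omega) := \sum_{|e|=1}\omega(0,e)[u(\theta_e\omega) - u(\omega)]$ is the $L^2(\mb Q)$-generator of the environmental process $(\evp t)_{t\ge 0}$. Because $\mb Q$ is invariant under this Markov semigroup, $E_\Q[v(t)] = E_\Q[\zeta]$ for all $t\ge 0$, and therefore
\[
\frac{\dd}{\dd t}\var_{\mb Q}(v(t)) = \frac{\dd}{\dd t}E_{\mb Q}[v(t)^2] = 2\,E_{\mb Q}[v(t)\,\mc L v(t)].
\]
Boundedness of $\zeta$ (hence of $v(t)$) makes this differentiation under the expectation routine.

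Next, I would apply the pointwise carré du champ identity
\[
\mc L(v^2)(\omega) - 2\,v(\omega)\mc L v(\omega) = \sum_{|e|=1}\omega(0,e)\bigl(\bar v(t,e;\omega) - \bar v(t,0;\omega)\bigr)^2,
\]
which follows by direct expansion. Taking $E_{\mb Q}$ on both sides and using invariance of $\mb Q$ to annihilate the term $E_{\mb Q}[\mc L(v^2)]$ (justified because $v^2$ is bounded, so $t\mapsto E_{\mb Q}[P_t(v^2)]\equiv E_{\mb Q}[v^2]$ and we may differentiate at $t=0$), we arrive at the \emph{identity}
\[
\frac{\dd}{\dd t}\var_{\mb Q}(v(t)) = -\,E_{\mb Q}\Bigl[\sum_{|e|=1}\omega(0,e)\bigl(\bar v(t,e) - \bar v(t,0)\bigr)^2\Bigr].
\]

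To conclude, assumption (A2) gives the deterministic uniform ellipticity bound $\omega(0,\pm e_i) = \omega_i(0)/(2\tr\omega(0)) \ge \kappa$ for every $i$, so pulling $\omega(0,e)\ge\kappa$ out of the expectation yields the asserted inequality with implicit constant $\kappa$. There is essentially no serious obstacle here; the only subtle point to handle carefully is that $\mb Q$ is \emph{not} translation invariant (only $\mb P$ is), so one must not try to ``integrate by parts'' by shifting the environment. The carré du champ route avoids this difficulty entirely because it uses only invariance of $\mb Q$ under the semigroup together with a pointwise algebraic identity, rather than any reversibility or translation-invariance structure.
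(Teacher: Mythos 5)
Your proof is correct and follows essentially the same approach as the paper: differentiate the variance, apply the algebraic carré du champ identity $L_\omega(\bar v^2)(0) = 2\bar v(0)L_\omega\bar v(0) + \sum_{|e|=1}\omega(0,e)(\bar v(e)-\bar v(0))^2$, use that $E_\Q[L_\omega\bar f(0;\omega)]=0$ by stationarity of the environmental process under $\Q$, and finish with the uniform ellipticity bound $\omega(0,e)\ge\kappa$ from (A2). Your explicit observation that $E_\Q[v(t)]=E_\Q[\zeta]$ is constant in $t$ (so that $\tfrac{\dd}{\dd t}\var_\Q(v(t))=\tfrac{\dd}{\dd t}E_\Q[v(t)^2]$ once one centers $\zeta$) is a slightly cleaner statement of the normalization that the paper handles with a ``without loss of generality'' remark.
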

\begin{proof}
Without loss of generality, assume $\mb E[v(t)]=0$. Then 
\begin{align*}
\frac{\dd}{\dd t}E_\Q[v(t)^2]
&=2 E_\Q[v(t) L_\omega\bar v(t,0)]\\
&=
E_\Q\left[
L_\omega(\bar v(t,0)^2)-\sum_{e:|e|=1}\omega(0,e)(\bar v(t,e)-\bar v(t,0))^2
\right]\\
&=
-\sum_{e:|e|=1}E_\Q\left[\omega(0,e)(\bar v(t,e)-\bar v(t,0))^2
\right]
\end{align*}	
where in the last equality we used the fact that (since $(\evp t)_{t\ge 0}$ is stationary under $\Q\times P_\omega$) for any $f\in L^1(\Q)$, $E_\Q[L_\omega\bar f(0;\omega)]=0$. The lemma follows by the uniform ellipticity assumption.
\end{proof}

\subsection{Proof of Corollary~\ref{cor:ex_sta_cor}:  Existence of a stationary corrector for $d\ge 5$}

\begin{proof}[Proof of Corollary~\ref{cor:ex_sta_cor}:]
 Without loss of generality, assume $E_\Q[\zeta]=0$. By Theorem~\ref{thm:var_decay}, 
since $\int_0^\infty (1+t)^{-d/4}\dd t<\infty$ when $d\ge 5$,  
 the limit 
 \begin{equation}\label{eq:def_stat_cor}
\phi(\omega)=\lim_{t\to\infty}\phi_t(\omega):=-\lim_{t\to\infty}\int_0^t (P_s\zeta) \dd s
 \end{equation}
exists in $L^p(\mb P)$ form any $p\in(0,2)$. By \eqref{eq:heat_eq_v},  $\bar\phi_t(x)$ satisfies
\[
L_\omega \bar\phi_t(x)=\zeta(\theta_x \omega)-P_t\zeta(\theta_x \omega), \quad x\in\Z^d.
\] 
Note that by Theorem~\ref{thm:var_decay}, $\lim_{t\to\infty}P_t\zeta=0$ in $L^p(\mb P)$, $\forall p\in(0,2)$. 
Therefore, taking $L^p(\mb P)$ limits as $t\to\infty$, we conclude that $\bar\phi$ satisfies $L_\omega\bar\phi(x)=\bar\zeta(x)$, $x\in\Z^d$.
 \end{proof}

\newpage
\newtheorem{atheorem}{Theorem}
\numberwithin{atheorem}{section}
\newtheorem{alemma}[atheorem]{Lemma}
\newtheorem{acorollary}[atheorem]{Corollary}

\appendix
\section{Appendix}
Define the parabolic operator $\ms L_\omega$ as
\[
\ms L_\omega u(x,t)=\sum_{y:y\sim x}\omega(x,y)[u(y,t)-u(x,t)]-\partial_t u(x,t)
\]
for every function $u:\Z^d\times\R\to\R$ which is differentiable in $t$. 
\begin{atheorem}(\cite[Proposition~5]{DG-19})
\label{thm:harnack-para}
Assume $\frac{\omega}{\tr\omega}>2\kappa I$ for some $\kappa>0$ and $R>0$. Any non-negative function $u$ with $\ms L_\omega u=0$ in $B_{2R}\times(0,4R^2)$ satisfies
\[
\sup_{B_{R}\times(R^2,2R^2)}u\le 
C\inf_{B_R\times(3R^2,4R^2)}u.
\]
\end{atheorem}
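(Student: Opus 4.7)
The plan is to adapt the classical Krylov--Safonov parabolic Harnack argument to the discrete balanced setting, following the blueprint that Fabes and Stroock used in the continuous case and that Kuo--Trudinger developed for discrete elliptic problems. The proof proceeds in three stages, paralleling the Krylov--Safonov scheme of (i) an ABP estimate, (ii) a measure-decay lemma yielding a weak Harnack inequality, and (iii) a local boundedness bound for subsolutions.

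\textbf{Stage 1 (discrete parabolic ABP).} First I would prove an Alexandrov--Bakelman--Pucci-type estimate: if $v:\bar B_{2R}\times\{0,1,\ldots,\lfloor 4R^2\rfloor\}\to\R$ vanishes on the parabolic boundary and satisfies $\ms L_\omega v\ge -f$, then
\[
\sup v_+ \;\lesssim\; \Bigl(R^{-(d+1)}\sum_{(x,t)\in\Gamma} f(x,t)^{d+1}\Bigr)^{1/(d+1)},
\]
where $\Gamma$ is the contact set between $v$ and a suitably defined discrete parabolic concave envelope. This is the discrete parabolic analogue of the Aleksandrov maximum principle; the balanced condition is crucial here, since it makes $L_\omega$ respect affine functions up to a controllable remainder, so that on $\Gamma$ the coefficients $\omega_i(x)$ effectively dominate the Hessian of the envelope.

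\textbf{Stage 2 (measure decay and weak Harnack).} Applying the ABP estimate to a well-chosen barrier function yields the Krylov--Safonov measure-decay lemma: if $u\ge 0$ is a parabolic supersolution in $B_{2R}\times(0,4R^2)$ with $\inf_{B_R\times(3R^2,4R^2)}u\le 1$, then for some $M,\delta>0$ depending only on $(d,\kappa)$, the set $\{u>M\}$ can occupy at most a fraction $1-\delta$ of a suitable intermediate cylinder. Iterating this via a parabolic Calder\'on--Zygmund cube decomposition on the discrete lattice produces a weak $L^{p_0}$ estimate
\[
\Bigl(R^{-(d+2)}\sum_{B_R\times(R^2,2R^2)} u^{p_0}\Bigr)^{1/p_0}\;\lesssim\;\inf_{B_R\times(3R^2,4R^2)}u
\]
for some $p_0=p_0(d,\kappa)>0$.

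\textbf{Stage 3 (local boundedness and combination).} A dual application of the ABP estimate, now to $u$ viewed as a subsolution, gives a Moser-type local boundedness bound
\[
\sup_{B_R\times(R^2,2R^2)} u\;\lesssim\;\Bigl(R^{-(d+2)}\sum_{B_{3R/2}\times(R^2/2,\,5R^2/2)} u^{p_0}\Bigr)^{1/p_0}.
\]
Combining this with the weak Harnack inequality from Stage 2, applied on a slightly enlarged cylinder and absorbing the overlapping region via a standard covering argument, produces the parabolic Harnack inequality of the theorem with a constant $C=C(d,\kappa)$.

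The main obstacle is Stage 1. Translating Aleksandrov's normal-map and concave-envelope argument from the continuum to a discrete space-time lattice is delicate: one must define the discrete parabolic envelope, identify the contact set in a way compatible with the discrete non-divergence operator, and control the Jacobian of a discrete analogue of the normal map, all while respecting the asymmetry between the spatial and temporal directions. Once the discrete parabolic ABP estimate is in hand, Stages 2 and 3 are fairly mechanical adaptations of the Krylov--Safonov and Caffarelli arguments to the lattice, using the uniform ellipticity from assumption (A2) and the translation-invariance of the lattice to carry through the cube decomposition.
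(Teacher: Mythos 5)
The paper does not prove this theorem: it is quoted verbatim as Theorem~\ref{thm:harnack-para} with a direct citation to \cite[Proposition~5]{DG-19}, and no proof appears in the appendix or elsewhere in the text. Your Krylov--Safonov roadmap (discrete parabolic ABP estimate, measure-decay lemma via a Calder\'on--Zygmund cube decomposition, weak Harnack for supersolutions combined with local boundedness for subsolutions) matches the strategy used in the cited reference and its elliptic precursor \cite{BCDG-18}, so your proposal follows essentially the same route. The one caveat worth flagging is that what you have written is an outline rather than a proof: the discrete parabolic ABP estimate of your Stage~1, which you correctly identify as the main obstacle, carries essentially all of the technical content, and in the non-divergence setting local boundedness of subsolutions (your Stage~3) comes from the same ABP-plus-covering machinery applied to subsolutions rather than from a genuinely ``dual'' ABP argument.
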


\begin{acorollary}\label{acor:Kry-Saf}
Assume $\frac{\omega}{\tr\omega}>2\kappa I$ for some $\kappa>0$ and $(x_0,t_0)\in\Z^d\times\R$.
There exists $\gamma=\gamma(d,\kappa)\in(0,1)$ such that any non-negative function $u$ with $\ms L_\omega u=0$ in $B_{R}(x_0)\times(t_0-R^2,t_0)$, $R>0$, satisfies
\[
|u(\hat x)-u(\hat y)|\le C\left(\frac{r}{R}\right)^\gamma\sup_{B_R(x_0)\times(t_0-R^2,t_0)}u
\]
for all $\hat x,\hat y\in B_r(x_0)\times(t_0-r^2,t_0)$ and $r\in(0,R)$.
\end{acorollary}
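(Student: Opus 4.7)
I plan to derive the Hölder estimate of Corollary~\ref{acor:Kry-Saf} by a standard iteration of a one-step oscillation decay, which itself follows directly from the parabolic Harnack inequality in Theorem~\ref{thm:harnack-para}. By space-time translation invariance of $\ms L_\omega$ (shifting the environment by $\theta_{x_0}$ and translating time), one may assume $(x_0,t_0)=(0,0)$, so that $u\ge 0$ solves $\ms L_\omega u=0$ on the parabolic cylinder $P_R:=B_R\times(-R^2,0)$; the same holds on any sub-cylinder $P_\rho$ with $\rho\le R$.

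\textbf{One-step oscillation decay.} The key claim is that there exists $\mu=\mu(d,\kappa)\in(0,1)$ such that whenever $u\ge 0$ is $\ms L_\omega$-harmonic on $P_\rho$,
\[
\osc_{P_{\rho/4}} u \le \mu \osc_{P_\rho} u.
\]
Set $M:=\sup_{P_\rho}u$, $m:=\inf_{P_\rho}u$, $v_1:=u-m$, $v_2:=M-u$. Both $v_1,v_2$ are non-negative $\ms L_\omega$-harmonic functions on $P_\rho$ (since constants are annihilated by $\ms L_\omega$), and $v_1+v_2\equiv M-m$. Consequently $\max(v_1,v_2)\ge(M-m)/2$ pointwise, so on the past cylinder
\[
Q^-:=B_{\rho/4}\times(-3\rho^2/16,\,-\rho^2/8)
\]
at least one of $\sup_{Q^-}v_1,\sup_{Q^-}v_2$ is $\ge(M-m)/2$; by symmetry assume it is $v_1$. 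Apply Theorem~\ref{thm:harnack-para} with radius $R'=\rho/4$ to $v_1$ on $B_{\rho/2}\times(-\rho^2/4,0)\subset P_\rho$ (time-shifted by $\rho^2/4$): the sup-cylinder of the Harnack inequality is exactly $Q^-$ and its inf-cylinder is exactly $P_{\rho/4}=B_{\rho/4}\times(-\rho^2/16,0)$. Hence
\[
(M-m)/2 \le \sup_{Q^-}v_1 \le C\inf_{P_{\rho/4}}v_1,
\]
which gives $\inf_{P_{\rho/4}}u\ge m+(M-m)/(2C)$, and therefore $\osc_{P_{\rho/4}}u\le(1-\tfrac{1}{2C})\osc_{P_\rho}u$; set $\mu:=1-\tfrac{1}{2C}$.

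\textbf{Iteration and Hölder bound.} Iterating the one-step decay yields $\osc_{P_{R/4^k}}u\le\mu^k\osc_{P_R}u\le 2\mu^k\sup_{P_R}u$ for every $k\ge 0$. Given $r\in(0,R)$, choose $k:=\lfloor\log_4(R/r)\rfloor$, so that $R/4^{k+1}<r\le R/4^k$, whence $\mu^k\le\mu^{-1}(r/R)^\gamma$ with $\gamma:=\log_4(1/\mu)\in(0,1)$. Since $P_r\subset P_{R/4^k}$, for any $\hat x,\hat y\in P_r$ we obtain
\[
|u(\hat x)-u(\hat y)| \le \osc_{P_r}u \le C\Bigl(\tfrac{r}{R}\Bigr)^\gamma\sup_{P_R}u,
\]
which is precisely the bound claimed in Corollary~\ref{acor:Kry-Saf}, with $\gamma$ and $C$ depending only on $d$ and $\kappa$.

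\textbf{Main obstacle.} The main technical content is aligning the sup- and inf-cylinders of Theorem~\ref{thm:harnack-para} so that a single step of iteration shrinks the radius by a definite factor ($1/4$) and deposits the inf-cylinder exactly on $P_{\rho/4}$; the choice $R'=\rho/4$ together with the time-shift $\rho^2/4$ is what makes this work. The pointwise identity $v_1+v_2\equiv M-m$ is what lets us use only the strong Harnack inequality, rather than having to invoke a weak Harnack or a measure-theoretic expansion-of-positivity argument. The ellipticity constant $\kappa$ enters only through the Harnack constant, and the small-$r$ regime (where $B_r(x_0)$ may be a singleton) is trivial.
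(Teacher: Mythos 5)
Your proof is correct and is the standard Krylov--Safonov oscillation-decay argument, which is evidently what the paper intends (the corollary is stated without proof as a direct consequence of the parabolic Harnack inequality, Theorem~\ref{thm:harnack-para}). The cylinder bookkeeping checks out: with $R'=\rho/4$ and time-shift $-\rho^2/4$, the sup- and inf-cylinders of Theorem~\ref{thm:harnack-para} land on $Q^-$ and $P_{\rho/4}$ as you claim, the pointwise identity $v_1+v_2\equiv M-m$ on $P_\rho$ gives $\sup_{Q^-}\max(v_1,v_2)\ge (M-m)/2$, and the iteration with $\gamma=\log_4(1/\mu)$ (capped at any value in $(0,1)$, since the bound only improves when $\gamma$ decreases) yields the stated Hölder estimate.
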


\subsection{Proof of Proposition~\ref{thm:quant-homo}}

Before giving a proof, recall that by \cite[Proposition~2.1]{GPT-19}, for any $p\in(0,d)$, there exists $\delta_p$ depending on  $(d,\kappa,p)$ such that for any $R>0$, the solution $\phi:\bar B_R\to\R$ of 
\begin{equation}
\label{eq:corrector}
\left\{
\begin{array}{lr}
L_\omega\phi=\bar a-a  &\text{ in }B_R,\\
\phi=0 &\text{ on }\partial B_R
\end{array}
\right.
\end{equation}
 satisfies 
\begin{equation}\label{eq:crtr-estimate}
\mb P(\max_{B_R}|\phi|\ge CR^{2-\delta_p})\le C\exp(-cR^p).
\end{equation} 

Set $\delta:=\delta_1$. For $q\in(0,d)$, let $\gamma=\gamma(d,\kappa,q)$ be the constant
\begin{equation}\label{eq:def-gamma}
\gamma=\min \left\{\frac{d-q}{d(1+\delta)},\frac{1}{2}\right\}
\end{equation}
and set
\[
R_0:=R^\gamma, \quad \sigma=\min\{n\ge 0:X_n-X_0\notin B_{R_0}\}.
\]
Let
\[
\omega_0=\tfrac{\omega}{\tr(\omega)}, \quad \psi_0=\psi_0(\omega)=\tfrac{\psi}{\tr(\omega)}.
\]
Following \cite[Definition~4.1]{GPT-19}, we define bad points.
\begin{definition}
Let $\delta=\delta(d,\kappa)$ be as above. We say that a point is good (and otherwise bad) if for any $\zeta(\omega)\in\{\psi_0,\omega_0\}$,
\[
\Abs{
E_\omega\big[\sum_{i=0}^{\sigma-1}(E_{\Q}[\zeta]-\zeta(\bar\omega_i))\big]
}\le 
C\norm{\zeta}_\infty R_0^{2-\delta}.
\]
Note that by \eqref{eq:crtr-estimate}, $\mb P(x\text{ is bad})\le Ce^{-cR_0}$.
\end{definition}

We will give first the proof for the special case $g\in C^{2,\alpha}(\partial\B_1)$. It is a small modification of the proof of \cite[Theorem~1.5]{GPT-19}.
\begin{proof}
[Proof of Proposition~\ref{thm:quant-homo} for the case $f\in C^\alpha(\mb B_1), g\in C^{2,\alpha}(\partial\B_1)$ and $y=0$:]
Note that if $g\in C^{2,\alpha}(\partial\B_1)$, it can be extended to be a function $\tilde g\in C^{2,\alpha}(\B_2)$ such that
\[
|\tilde g|_{2,\alpha;\B_2}
\le 
C\abs{g}_{2,\alpha;\partial\B_1}.
\]
By \cite[Theorem~6.6]{GiTr}, 
\begin{equation}\label{eq:regularity}
|\bar u|_{2,\alpha,\B_1}\lesssim |f|_{0,\alpha;\B_1}\norm{\tfrac{\psi}{\tr(\omega)}}_\infty+|g|_{2,\alpha;\partial\B_1}=:A
\end{equation}
\begin{enumerate}[Step 1.]
\item\label{item:step1} Set $\bar u_R(x)=\bar u(x/R)$ for $x\in \bar \B_R$.
 We will show that in $B_R$,  $\bar u_R$ is very close to the solution $\hat u:\bar B_R\to\R$ of 
\[
\left\{
\begin{array}{lr}
L_\omega\hat u=\frac{1}{2}\tr[\omega_0 D^2\bar u_R] &\text{ in }B_R\\
\hat u=g(\frac{x}{|x|}) &\text{ on }\partial B_R.
\end{array}
\right.
\]
To this end, define $u_+, u_-$ by $u_{\pm}(x)=\tilde g(\frac{x}{R+1})\pm CA\frac{(R+1)^2-|x|^2}{R^2}$, $x\in\bar\B_{R+1}$. 
Here $A$ is as defined in \eqref{eq:regularity}.
Then, for $x\in\B_{R+1}$, taking $C$ large enough,
\begin{align*}
\tr[\bar aD^2(u_+-\bar u_{R+1})]
&\le \frac{c}{R^2}(|g|_{2,\alpha;\B_1}+|f|_{0;\B_1}-CA)\le 0,
\end{align*}	
and similarly $\tr[\bar a(u_--\bar u_{R+1})]\ge 0$. The comparison principle then yields
\[
u_-\le \bar u_{R+1}\le u_+ \quad \text{ in }\B_{R+1}.
\]
In particular, for $x\in\partial B_R$, 
$
\abs{
\bar u_{R+1}(x)-\tilde{g}(\tfrac{x}{R+1})
}
\lesssim A\frac{(R+1)^2-|x|^2}{R^2}
\lesssim \frac{A}{R}$ and so 
\begin{equation}\label{eq:bdry-compare}
\max_{\partial B_R}|\hat u-\bar u_{R+1}|=\max_{x\in\partial B_R}|g(\tfrac x{|x|})-\bar u_{R+1}(x)|\lesssim \frac{A}{R}.
\end{equation}
Moreover, noting that $D^2\bar u_R(x)=R^{-2}D^2\bar u(\tfrac xR)$, in $B_R$,
\begin{align}\label{eq:laplacian-compare}
\abs{L_\omega(\hat u-\bar u_{R+1})}
&=\abs{\tr[\omega_0(D^2\bar u_R-\nabla^2\bar u_{R+1})]}\nn\\
&\le 
\abs{\tr[\omega_0(D^2\bar u_R-D^2\bar u_{R+1})]}
+\abs{\tr[\omega_0(D^2\bar u_{R+1}-\nabla^2\bar u_{R+1})]}\nn\\
&\lesssim
R^{-2-\alpha}|\bar u|_{2,\alpha;\bar\B_1}\stackrel{\eqref{eq:regularity}}{\lesssim} AR^{-2-\alpha}.
\end{align}	
Hence, by the ABP maximum principle \cite[Lemmas 2.3 and 2.4]{GPT-19}, 
\eqref{eq:bdry-compare} and \eqref{eq:laplacian-compare} imply
$\max_{B_R}|\hat u-\bar u_{R+1}|
\lesssim
AR^{-\alpha}$, and so, by \eqref{eq:regularity},
\[
\max_{B_R}|\hat u-\bar u_R|\lesssim AR^{-\alpha}.
\]

\item\label{item:step2}
Let $v=u-\hat u$. Then $v$ solves
\[
\left\{
\begin{array}{lr}
L_\omega v=\tfrac12\tr[(\bar a-\omega_0)D^2\bar u_R]
+\tfrac{1}{R^2}f(\tfrac{x}{R})(\psi_0-\bar\psi)
 &\text{ in }B_R\\
v=0 &\text{ on }\partial B_R.
\end{array}
\right.
\]
Note that, for $x\in B_{R-R_0}$ and $y\in B_{R_0}(x)$,
\begin{align*}
&|D^2\bar u_R(x)-D^2\bar u_R(y)|\le R^{-2}(\tfrac{R_0}{R})^\alpha[\bar u]_{2,\alpha;\B_1}
\stackrel{\eqref{eq:regularity}}{\lesssim}
AR^{-2}(\tfrac{R_0}{R})^\alpha,\\
&\abs{f(\tfrac{x}{R})-f(\tfrac{y}{R})}\le \left(\tfrac{R_0}{R}\right)^\alpha[f]_{\alpha;\B_1}.
\end{align*}	
Hence, if $x\in B_{R-R_0}$ is a good point, setting 
\[\bar\omega_0^i:=\omega_0(X_i) 
\quad\text{ and }\quad
\psi_0^i:=\psi_0(\theta_{X_i}\omega),
\]
and noting that $E_\omega^x[\sigma]\le (R_0+1)^2$, we have
\begin{align*}
&E_\omega^x[v(X_\sigma)-v(x)]\\
&=
E_\omega^x\left[
\sum_{i=0}^{\sigma-1}\tfrac{1}{2}\tr[(\bar\omega_0^i-\bar a)D^2\bar u_R(X_i)]+\tfrac{1}{R^2}f(\tfrac{X_i}{R})(\bar\psi-\psi_0^i)
\right]\\
&\lesssim
\tr\left[
E_\omega^x[
\sum_{i=0}^{\sigma-1}(\bar\omega_0^i-\bar a)]
D^2\bar u_R(x)\right]+\tfrac{1}{R^2}f(\tfrac{x}{R})E_\omega^x[\sum_{i=0}^{\sigma -1}(\bar\psi-\psi_0^i)]+
A\tfrac{R_0^\alpha}{R^{2+\alpha}} E_\omega^x[\sigma]\\
&\lesssim
\tfrac{1}{R^2}\Abs{E_\omega^x[
\sum_{i=0}^{\sigma-1}(\bar\omega_0^i-\bar a)]}|\bar u|_{2;\B_1}
+\tfrac{1}{R^2}|f|_{0;\B_1}\Abs{E_\omega^x[\sum_{i=0}^{\sigma -1}(\bar\psi-\psi_0^i)]}
+A\left(\tfrac{R_0}{R}\right)^{2+\alpha}\\
&\lesssim
AR^{-2}R_0^{2-\delta}+A\left(\tfrac{R_0}{R}\right)^{2+\alpha}
\lesssim
AR^{-2-\alpha\delta\gamma}R_0^{2}.
\end{align*}	
Let $\tau_R=\min\{n\ge 0:X_n\notin B_R\}$ and set
\[
w(x)=v(x)+C_1AR^{-2-\alpha\delta\gamma}E_\omega^x[\tau_R].
\]
Then, for any good point $x\in B_{R-R_0}$, by choosing $C_1$ big enough,
\[
E_\omega^x[w(X_\sigma)-w(x)]=E_\omega^x[v(X_\sigma)-v(x)]-C_1AR^{-2-\alpha\delta\gamma}E_\omega^x[\sigma]<0,
\]
where we used the fact that $E_\omega^x[\sigma]\ge R_0^2$. This implies
\begin{equation}\label{eq:subdiff-empty}
\partial w(x;B_R)=\emptyset \quad\text{ for any good point }x\in B_{R-R_0}
\end{equation}
where $\partial w(x;B_R)$ denote the sub-differential set of $w$ at $x$ with respect to $B_R$. 
For the definition of the sub-differential set and the ABP inequality, we refer to \cite[Definition 2.2, Lemmas 2.3 and 2.4]{GPT-19}. Next, we will apply the ABP inequality to bound $|v|$ from the above.

By \cite[Lemma 2.4]{GPT-19}, since
\[
L_\omega w=L_\omega v-C_1AR^{-2-\alpha\delta\gamma}
\lesssim AR^{-2},
\] 
we know that $|\partial w(x;B_R)|\lesssim A^dR^{-2d}$ for $x\in B_R$. Let 
\[
\ms B_R=\ms B_R(\omega,\gamma):=\#\text{bad points in }B_{R-R_0},
\]
where $\#S$ denotes the cardinality of a set $S$.
Display \eqref{eq:subdiff-empty} then yields
\[
|\partial w(B_R)|\lesssim 
[\ms B_R+\#(B_R\setminus B_{R-R_0})]A^dR^{-2d}
\lesssim
(\ms B_R+R^{d-1+\gamma})A^dR^{-2d}.
\]
Hence, by the ABP inequality \cite[Lemma 2.4]{GPT-19},
\[
\min_{B_R}w\ge -CR|\partial w(B_R)|^{1/d}
\gtrsim
-A(R^{-1}\ms B_R^{1/d}+R^{-(1-\gamma)/d}).
\]
Therefore, noting that $\max_{x\in B_R}E_\omega^x[\tau_R]\le (R+1)^2$ and choosing $\delta<1/d$,
\[
\min_{B_R} v\ge \min_{B_R}w-CAR^{-\alpha\delta\gamma}
\gtrsim
-A(R^{-1}\ms B_R^{1/d}+R^{-\alpha\gamma\delta}).
\]
Similar bound for $\min_{B_R}(-v)$ can be obtained by substituting $f,g$ by $-f,-g$ in the problem. Therefore
\[
\max_{B_R}|v|\lesssim A(R^{-1}\ms B_R^{1/d}+R^{-\alpha\gamma\delta}).
\]
\item Combining results in Steps~\ref{item:step1} and \ref{item:step2}, we get
\[
\max_{B_R}|u-\bar u_R|\lesssim A(R^{-1}\ms B_R^{1/d}+R^{-\alpha\gamma\delta}).
\]
It is shown in Step 6 of \cite[Proof of Theorem~1.5]{GPT-19} that, with
\begin{equation}\label{eq:def-x}
\ms X=\ms X(\omega):=\max_{R\ge 1}R^{-\gamma}\ms B_R^{1/d},
\end{equation}
we have $\mb E[\exp(c\ms X^d)]<\infty$. Therefore, recalling the values of $\gamma, A$ in \eqref{eq:def-gamma}, \eqref{eq:regularity}, we conclude that
\begin{equation}
\label{eq:quan-hom-good-reg}
\max_{B_R}|u-\bar u_R|
\lesssim
R^{-\alpha\gamma\delta}(1+R^{-q/d}\ms X)(|f|_{0,\alpha;\B_1}\norm{\psi_0}_\infty+|g|_{2,\alpha;\partial\B_1}).
\end{equation}
\end{enumerate}
\end{proof}

In what follows, we will relax the regularity of $g$ to be $C^{0,\alpha}(\partial\B_1)$.

\begin{proof}
[Proof of Proposition~\ref{thm:quant-homo}:]
First, we consider the case $y=0$. The function $g\in C^{0,\alpha}(\partial\B_1)$ can be extended into $\R^d$ so that $g\in C^{0,\alpha}(\R^d)$ and 
\[
|g|_{0,\alpha;\B_2}\le C|g|_{0,\alpha;\partial\B_1}.
\] 
We can further obtain a smooth perturbation of it. To this end, let $\rho\in C^\infty(\R^d)$ be a {\it mollifier} supported on $\B_1$ with $\int_{\B_1}\rho\dd x=1$. For $h\in(0,1)$, set $\rho_h(x):=h^{-d}\rho(\tfrac xh)$, and let $g_h=\rho_h*g$. That is, 
$g_h(x)=\int_{\R^d}\rho_h(x-z)g(z)\dd z$. Then $g_h$ satisfies
\[
\left\{
\begin{array}{lr}
&\abs{g-g_h}_{0;\partial\B_1}\le Ch^\alpha|g|_{0,\alpha;\partial\B_1}\\
&|g_h|_{2,\alpha;\partial\B_1}\le Ch^{-2}|g|_{0,\alpha;\partial\B_1}.
\end{array}
\right.
\]

Next, for $h\in(0,1)$, let $v:\bar B_R\to\R$ and $\bar v:\bar\B_1\to\R$ be solutions of 
\[
\left\{
\begin{array}{lr}
\tfrac{1}{2}\tr(\omega\nabla^2 v)=\tfrac{1}{R^2}f(\tfrac{x}{R})\psi(\theta_x\omega) &\text{ in }B_R\\
v(x)=g_h(\tfrac{x}{R}) &\text{ for }x\in\partial B_R
\end{array}
\right.
\]
and
\[
\left\{
\begin{array}{lr}
\tr(\bar aD^2\bar v)=f\bar\psi &\text{ in }\B_1\\
\bar v=g_h &\text{ on }\partial\B_1.
\end{array}
\right.
\]
Then, $\max_{\B_1}|\bar u-\bar v|\le \max_{\partial\B_1}|g-g_h|\le h^\alpha|g|_{0,\alpha;\partial\B_1}$, and
\[
\max_{B_R}|u-v|\le\max_{\partial B_R}|g(\tfrac{x}{R})-g_h(\tfrac{x}{R})|\le Ch^\alpha|g|_{0,\alpha;\partial\B_1}.
\]
 Moreover, by \eqref{eq:quan-hom-good-reg}, with $A_1=\norm{f}_{C^{0,\alpha}(\B_1)}\norm{\tfrac{\psi}{\tr(\omega)}}_\infty+[g]_{C^{0,\alpha}(\partial\B_1)}$ as in Proposition~\ref{thm:quant-homo},
\begin{align*}
\max_{x\in B_R}|v(x)-\bar v(\frac{x}{R})|
&\lesssim
R^{-\alpha\gamma\delta}(1+R^{-q/d}\ms X)(|f|_{0,\alpha;\B_1}\norm{\tfrac{\psi}{\tr(\omega)}}_\infty+|g_h|_{2,\alpha;\partial\B_1})\\
&\lesssim
A_1h^{-2}
R^{-\alpha\gamma\delta}(1+R^{-q/d}\ms X).
\end{align*}	
Notice that up to an additive constant, we may assume that $\inf_{\partial B_1}g=0$, so that $|g|_{0,\alpha;\partial\B_1}\le C[g]_{0,\alpha;\partial\B_1}$.
Therefore, putting $h=R^{-\alpha\gamma\delta/3}$, by the triangle inequality, 
\begin{equation}
\label{eq:quant-hom-less-reg}
\max_{x\in B_R}|u-\bar u(\tfrac{x}{R})|\lesssim 
A_1R^{-\alpha\gamma\delta/3}(1+R^{-q/d}\ms X).
\end{equation}
We proved Proposition~\ref{thm:quant-homo} for the case $y=0$ with  $\beta=\gamma\delta/3$.

Finally, for any $y\in B_{3R}$, it follows from \eqref{eq:quant-hom-less-reg} that
\[
\max_{x\in B_R(y)}\Abs{\bar u(\tfrac{x-y}{R})-u(x)}\lesssim
A_1R^{-\alpha\beta}(1+\ms X(\theta_y\omega) R^{-q/d}).
\]
Let
\[
\ms B_R(y)=\ms B_R(\theta_y\omega,\gamma):=\#\text{bad points in }B_{R-R_0}(y).
\]
Observe that $\ms B_R(y)\le \ms B_{4R}(0)$. Thus, recalling the definition of $\ms X$ in \eqref{eq:def-x},
\[
\ms X(\theta_y\omega)\le \max_{R\ge 1}R^{-\gamma}\ms B_{4R}^{1/d}\le 4^\gamma\ms X.
\]
Our proof of Proposition~\ref{thm:quant-homo} is complete.
\end{proof}

\subsection{Proofs of Lemmas \ref{lem:eta} and \ref{lem:test-exponential}}\label{asection:test-fun}

\begin{proof}
[Proof of Lemma~\ref{lem:eta}:]
By direct computation, for any $i=1,\ldots,d$, $x\in\Z^d$,
\begin{equation}
\label{eq:2nd-der-eta}
\partial_i^2\eta(x)=\theta(1+|x|^2)^{-\theta-2}[4(\theta+1)x_i^2-2|x|^2-2],
\end{equation}
\[
|\partial_i^3\eta(x)|
\le C\theta^3|x|(1+|x|^2)^{-\theta-2}.
\]
Moreover, noting that for any $y\in\R^d$ and $x\notin B_{\theta}$ with $|y-x|\le 1$, 
\[
|\partial_i^3\eta(y)|\le C\theta^3(\tfrac{1+\theta}{\theta})^{\theta+2}|x|(1+|x|^2)^{-\theta-2}\le C\theta^3|x|(1+|x|^2)^{-\theta-2}
\]
and $|\nabla_i^2\eta(x)-\partial_i^2\eta(x)|\le C\sup_{y:|y-x|\le 1}|\partial_i^3\eta(y)|$, 
we have, for a sufficiently large constant $C_0(\kappa)>1$  and $x\notin B_{C_0\theta^2}$, 
\begin{align*}
L_\omega\eta(x)
&\ge 
\sum_{i=1}^d\frac{\omega_i(x)}{2\tr\omega(x)}\left[\partial_i^2\eta(x)-C\theta^3|x|(1+|x|^2)^{-\theta-2}\right]\\
&\stackrel{\eqref{eq:2nd-der-eta}}{\ge}
\theta(1+|x|^2)^{-\theta-2}[4\kappa(\theta+1)|x|^2-|x|^2-1-C\theta^2|x|]>0.
\end{align*}	
On the other hand, for $x\in B_{C_0\theta^2}$, clearly $L_\omega\eta(x)\ge -\eta(x)\ge -1$. 

The lemma is proved.
\end{proof}

\begin{proof}[Proof of Lemma~\ref{lem:test-exponential}]
Computations show that, for $x\neq 0$, $i=1,\ldots, d$,
\[
\partial_i^2 e^{-2\alpha|x|/R}=\left(-\tfrac {2\alpha}{|x|}+\tfrac{2\alpha x_i^2}{|x|^3}+\tfrac{4\alpha^2x_i^2}{R|x|^2}\right)R^{-1}e^{-2\alpha|x|/R},
\]
\[
\partial_i^3 e^{-2\alpha|x|/R}
=
\left(\frac{6\alpha}{R}+\frac{3}{|x|}-\frac{4\alpha x_i^2}{R|x|^2}-\frac{4\alpha^2x_i^2}{R^2|x|}-	\frac{3x_i^2}{|x|^3}\right)
\frac{2\alpha x_i}{R|x|^2}
e^{-2\alpha|x|/R}.
\]
Note that, for $i=1,\ldots, d$, $x\in B_R\setminus B_{R/2}$,
\[
\Abs{(\partial_i^2-\tfrac12\nabla_i^2)e^{-2\alpha|x|/R}}
\le 
C\sup_{y:|y-x|\le 1}\Abs{\partial_i^3 e^{-2\alpha|y|/R}}
\le \frac{C\alpha}{R^3}e^{-2\alpha|x|/R},
\]
and so 
\begin{equation}
\label{eq:disc-L-error}
\Abs{\sum_{i=1}^d\omega(x,x+e_i)(\partial_i^2-\tfrac{1}{2}\nabla_i^2)e^{-2\alpha|x|/R}}\le \frac{C\alpha}{R^3}e^{-2\alpha|x|/R}.
\end{equation}
Further, by taking $K>0$ sufficiently large (note $R\ge K$), and choosing $\alpha>0$ to be sufficiently small, we have, for  $x\in B_R\setminus B_{R/2}$,
\begin{align*}
&\sum_{i=1}^d\omega(x,x+e_i)\partial_i^2e^{-2\alpha|x|/R}\\
&=\sum_{i=1}^d\omega(x,x+e_i)\left(-\tfrac {2\alpha}{|x|}+\tfrac{2\alpha x_i^2}{|x|^3}+\tfrac{4\alpha^2x_i^2}{R|x|^2}\right)R^{-1}e^{-2\alpha|x|/R}\\
&\le 
\left(
-\tfrac{\alpha}{|x|}+\tfrac{(1-2\kappa)\alpha|x|^2}{|x|^3}+\tfrac{2(1-2\kappa)\alpha^2|x|^2}{R|x|^2}
\right)R^{-1}e^{-2\alpha|x|/R}\\
&\le C(-1+C\alpha)\tfrac{\alpha}{R^2}e^{-2\alpha|x|/R}
\le -\tfrac{C\alpha}{R^2}e^{-2\alpha|x|/R}.
\end{align*}
This, together with \eqref{eq:disc-L-error}, implies, 
\[
L_\omega e^{-2\alpha|x|/R}\le -\tfrac{C\alpha}{R}e^{-2\alpha|x|/R}, \quad \text{for }x\in B_R\setminus B_{R/2}.
\]
Display \eqref{eq:20200203-1} is proved.

To prove \eqref{eq:L-e^x2}, note that when $x=0$, $L_\omega(e^{-A|x|^2})=e^{-A}-1>-1$. When $x\in\Z^2\setminus\{0\}$, choosing $A>0$ sufficiently large, 
\begin{align*}
L_\omega(e^{-A|x|^2})
&=
e^{-A|x|^2}\left[\sum_{i=1}^d\omega(x,x+e_i)(e^{2Ax_i-A}+e^{-2Ax_i-A})-1\right]\\
&\ge 
e^{-A|x|^2}[\kappa e^{2A-A}-1]>0.
\end{align*}	
Display \eqref{eq:L-e^x2} is proved. 

It remains to prove \eqref{eq:L-e^x2/R2}. Using the inequalities $e^a+e^{-a}\ge 2+a^2, e^a\ge 1+a$, we get, by taking $A$ sufficiently large, for $x\in B_R\setminus B_{R/2}$, 
\begin{align*}
L_\omega(e^{-A|x|^2/R^2})
&=e^{-A|x|^2/R^2}\sum_{i=1}^d\omega(x,x+e_i)\left[e^{-A(1+2x_i)/R^2}+e^{-A(1-2x_i)/R^2}-2\right]\\
&\ge 
e^{-A|x|^2/R^2}\sum_{i=1}^d\omega(x,x+e_i)\left[
e^{-A/R^2}(2+4A^2x_i^2/R^4)-2
\right]\\
&\ge 
\tfrac{A}{R^2}e^{-A|x|^2/R^2}\sum_{i=1}^d\omega(x,x+e_i)
\left[\tfrac{4Ax_i^2}{R^2}(1-\tfrac{A}{R^2})-1\right]\\
&\ge 
\tfrac{A}{R^2}e^{-A|x|^2/R^2}\left[\frac{2\kappa A|x|^2}{R^2}-1\right]>0.
\end{align*}	
Our proof is complete.
\end{proof}


\begin{thebibliography}{99}

\bibitem{AN-17}
S. Andres, S. Neukamm, 
\emph{Berry-Esseen Theorem and Quantitative homogenization for the Random Conductance Model with degenerate Conductances}
Stoch PDE: Anal Comp. 7, 240-296 (2019).

\bibitem{AL-17} S. N. Armstrong, J. Lin,
\emph{Optimal quantitative estimates in stochastic homogenization for elliptic equations in nondivergence form}
Arch. Ration. Mech. Anal. 225 (2017), no. 2, 937-991. 


\bibitem{AS-14} S. N. Armstrong, C. Smart,
\emph{Quantitative Stochastic Homogenization of Elliptic Equations in Nondivergence Form}.
Arch. Ration. Mech. Anal. 214 (2014), no. 3, 867-911. 


\bibitem{BLP}
A. Bensoussan, J.-L. Lions, G. Papanicolaou, 
\emph{Asymptotic analysis for periodic structures}.
AMS Chelsea Publishing, Providence, RI, 2011. xii+398 pp. ISBN: 978-0-8218-5324-5.

\bibitem{BCDG-18} N. Berger, M. Cohen, J.-D. Deuschel, X. Guo,
\emph{An elliptic Harnack inequality for random walk in balanced environments}. 
 Ann. Probab. 50 (2022), no. 3, 835-873. 

\bibitem{BD-14} N. Berger, J.-D. Deuschel,
\emph{A quenched invariance principle for non-elliptic random walk in i.i.d. balanced random environment}.
 Probab. Theory Related Fields 158 (2014), no. 1-2, 91-126.
 
 
\bibitem{MB-11} M. Biskup,
\emph{Recent progress on the random conductance model}.
Probab. Surv. 8 (2011), 294-373. 
 
\bibitem{BBLM-05} S, Boucheron, O. Bousquet, G.  Lugosi,  P. Massart,
\emph{Moment inequalities for functions of independent random variables}.
Ann. Probab. 33 (2005), no. 2, 514-560. 
 
\bibitem{BLM-13} S, Boucheron, G.  Lugosi,  P. Massart,
\emph{Concentration Inequalities: A Nonasymptotic Theory of Independence}.
Oxford University Press, 2013.
 
 



 \bibitem{CaSu10}
 L. A. Caffarelli, P. E. Souganidis,
 \emph{Rates of convergence for the homogenization of fully nonlinear uniformly elliptic pde in random media}.  
 Invent. Math. 180(2), 301-360 (2010).
 
 
 
\bibitem{PM-15}
P. de Buyer, J.-C. Mourrat,
\emph{Diffusive decay of the environment viewed by the particle},
Electron. Commun. Probab. 20 (2015), no. 23, 1-12. 
 
\bibitem{DL-03}Y. Derriennic, M. Lin,
\emph{The central limit theorem for Markov chains
started at a point}.
Probab. Theory Relat. Fields 125 (2003), 73-76. 
 
\bibitem{DG-19}
J.-D. Deuschel, X. Guo,
\emph{Quenched local central limit theorem for random
walks in a time-dependent balanced random}.
Probab. Theory Relat. Fields 182 (2022), 111-156.

\bibitem{DGR-15} J.-D. Deuschel, X. Guo, A. Ramirez,
\emph{Quenched invariance principle for random walk in time-dependent balanced random environment}.
Ann. Inst. Henri Poincar\'{e} Probab. Stat. Vol. 54, No. 1(2018), 363-384.

\bibitem{Esc00}L. Escauriaza
\emph{Bounds for the fundamental solutions of elliptic and parabolic equations: In memory of Eugene Fabes}. 
Communications in Partial Differential Equations 25.5-6 (2000): 821-845.


\bibitem{FS84} E. Fabes, D. Stroock,
\emph{The $L^p$-integrability of Green's functions and fundamental solutions for elliptic and parabolic equations}.
Duke Math. J. 51 (1984), no. 4, 997-1016. 

\bibitem{GiTr}
D. Gilbarg, N. S. Trudinger,
\emph{Elliptic partial differential equations of second order}.
Reprint of the 1998 edition. Classics in Mathematics. Springer-Verlag, Berlin, 2001. xiv+517 pp.

\bibitem{GNO-15}
A. Gloria, S. Neukamm, F. Otto,
\emph{Quantification of ergodicity in stochastic homogenization: optimal bounds via spectral gap on Glauber dynamics}.
Invent. Math. 199 (2015), no. 2, 455-515. 




\bibitem{GPT-19}X. Guo, J. Peterson, H. V. Tran,
\emph{Quantitative homogenization in a balanced random environment}.
Electron. J. Probab., 2022, to appear.

\bibitem{GST-22} X. Guo, T. Sprekeler, H. V. Tran
\emph{Characterizations of diffusion matrices in homogenization of elliptic equations in nondivergence-form}.
arXiv:2201.01974 [math.AP].

\bibitem{GTY-19}X. Guo, H. V. Tran, Y. Yu,
\emph{Remarks on optimal rates of convergence in periodic homogenization of linear elliptic equations in non-divergence form}.
SN Partial Differ. Equ. Appl. 1, 15 (2020).

\bibitem{GZ-12}X. Guo, O. Zeitouni,
\emph{Quenched invariance principle for random walks in balanced random environment}.
Probab. Theory Related Fields 152 (2012), 207-230.



\bibitem{JKO}V. V. Jikov, S. M. Kozlov, O. A. Oleinik,
\emph{Homogenization of Differential Operators and Integral Functionals}.
Translated from the Russian by G.A. Yosifian, Springer-Verlag Berlin Heidelberg 1994, xii+570 pp. ISBN: 3-540-54809-2.

\bibitem{KV-86}
C. Kipnis, S. R. S. Varadhan,
\emph{Central Limit Theorem for Additive Functionals of Reversible Markov Processes and Applications to Simple Exclusions},
Commun.Math.Phys. 104,1-19(1986).

\bibitem{KT-96}
H. J. Kuo, N. Trudinger,
\emph{Positive difference operators on general meshes}.
Duke Math. J. 83 (1996), no. 2, 415-433.


\bibitem{L-82}
G. Lawler,
\emph{Weak convergence of a random walk in a
random environment}.
 Comm. Math. Phys. 87:81--87, (1982). 
 
 
 
 
 
 \bibitem{LS-15}
 J. Lin, C. Smart,
 \emph{Algebraic error estimates for the stochastic homogenization of uniformly parabolic equations}.
 Anal. PDE 8 (2015), no. 6, 1497-1539.
 

\bibitem{JM-11}
J.-C. Mourrat,
\emph{Variance decay for functionals of the environment viewed by the particle}. 
 Ann. Inst. H. Poincar\'{e} Probab. Stat. 47(11), 294-327 (2011)
 



 
\bibitem{PV-82}
 G. Papanicolaou and S.R.S. Varadhan.
\emph {Diffusions with random coefficients.}
Statistics and probability: essays in honor of
 C. R. Rao,  pp. 547-552, North-Holland, Amsterdam, (1982).
 
 
 
 
\bibitem{Yur-88}
V. V. Yurinski,
\emph{On the error of averaging of multidimensional diffusions}.
Teor. Veroyatnost. i Primenen 33(1), 14-24 (1988) [Eng. transl. in Theory Probab. Appl. 33(1), 11-21 (1988)].

\bibitem{OZ-04}
O. Zeitouni,
\emph{Random walks in random environment}.
In Lectures on probability theory and statistics, volume 1837 of Lecture Notes in Math., pages 189-312. Springer, Berlin, 2004.

\end{thebibliography}
\end{document}